\def\fullpage {
\addtolength{\topmargin}{-2 cm}
\addtolength{\oddsidemargin}{-0.9cm} \addtolength{\textwidth}{+2 cm}
\addtolength{\textheight}{+4 cm}}
\newtheorem{thm}{Theorem}
\newtheorem*{thmall}{Asymmetric Local Lemma}
\newtheorem{lemma}[thm]{Lemma}
\newtheorem{cor}[thm]{Corollary}
\newtheorem{claim}[thm]{Claim}
\theoremstyle{remark}
\theoremstyle{definition}
\newtheorem{definition}[thm]{Definition}
\newcommand{\Ex}{\mathop{\bf E\/}}
\newcommand{\I}{\mathop{\bf I\/}}
\newcommand{\Var}{\mathop{\bf Var\/}}
\newcommand{\comp}[2]{\substack{{\text{ \tiny #2 }} \\ #1}}
\numberwithin{equation}{section}
\title{List Coloring Triangle-Free Hypergraphs}
\begin{document}

\author{
\quad{Jeff Cooper}
\thanks{Department of Mathematics, Statistics, and Computer
Science, University of Illinois at Chicago, Chicago, IL 60607, USA;  email:
jcoope8@uic.edu}
\quad{Dhruv Mubayi}
\thanks{Department of Mathematics, Statistics, and Computer Science, University of Illinois at Chicago, Chicago IL 60607, USA; research supported in part by NSF grant 0969092; email:  mubayi@uic.edu}
}

\maketitle

\begin{abstract}
A triangle in a hypergraph is a collection of  distinct vertices $u,v,w$ and  distinct edges $e,f,g$ with $u,v \in e$, $v,w \in f$,  $w,u \in g$ and $\{u,v,w\} \cap e \cap f \cap g=\emptyset$. Johansson~\cite{johansson} proved that every triangle-free graph with maximum degree $\Delta_2$ has list chromatic number $O(\Delta_2/\log\Delta_2)$. Frieze and the second author~\cite{fmcoloring3} proved that every  {\it linear} (meaning that every two edges share at most one vertex)  triangle-free  triple system with maximum degree $\Delta_3$ has chromatic number $O(\sqrt{\Delta_3/\log \Delta_3})$.  The restriction to linear triple systems was crucial to their proof. 

We provide a generalization of these results. The $i$-degree of a vertex in a hypergraph is the number of edges of size $i$ containing it. 
We prove that every triangle-free hypergraph of rank three (edges have size two or three) with maximum 3-degree $\Delta_3$ and maximum 2-degree $\Delta_2$ has list chromatic number at most 
\[ c \,\max\left\{  \frac{\Delta_2}{\log{\Delta_2}}\, , \left(\frac{\Delta_3}{\log{\Delta_3}}\right)^{\frac{1}{2}} \right\},
\] 
for some absolute positive constant $c$.

Thus our result removes the {\it linear} restriction from~\cite{fmcoloring3} and applies to the broader class of rank three hypergraphs, while reducing to the (best possible) result~\cite{johansson} for graphs.  As an application, we prove that if ${\cal C}_3$ is the collection of 3-uniform triangles, then the Ramsey number $R({\cal C}_3, K_t^3)$ satisfies
\[
\frac{a t^{3/2}}{(\log t)^{3/4}} \leq R({\cal C}_3, K_t^3) \leq  \frac{bt ^{3/2}}{(\log t)^{1/2}}
\]
for some positive constants $a$ and $b$. The upper bound makes progress towards the recent conjecture of Kostochka, the second author, and Verstra\"ete~\cite{kmvr3} that $R(C_3, K_t^3) = o(t^{3/2})$ where $C_3$ is the linear triangle. 
\end{abstract}

\section{Introduction}
A hypergraph $H = (V, E)$ is a tuple consisting of a set of vertices $V$
and a set of edges $E$, which are subsets of $V$.
The hypergraph has rank $k$ if every edge contains at most $k$ vertices
and is called $k$-uniform if every edge contains exactly $k$ vertices.
A proper coloring of $H$ is an assignment of colors
to the vertices so that no edge is monochromatic.
The chromatic number of $H$, $\chi(H)$, is the minimum number of colors
needed in a proper coloring of $H$.

The chromatic number of graphs ($2$-uniform hypergraphs) has been studied extensively.
A greedy coloring algorithm can be used to show that for any graph $G$ with maximum 
degree $\Delta$, $\chi(G) \leq \Delta + 1$; this bound is tight for complete graphs and 
odd cycles. Brooks \cite{brooks} extended this by showing that if $G$ is not a complete graph 
or an odd cycle, then $\chi(G) \leq \Delta$. 

A natural question to ask is what other structural properties
can be put on a graph to decrease its chromatic number.
One approach is to fix a graph $K$ and consider the family of graphs which
contain no copy of $K$. 
For example, if $K$ is a tree on $e$ edges and $G$ contains no copy of $K$,
then $\chi(G) \leq e$; this follows from the fact that if $G$ contains no copy of $K$,
then $G$ contains a vertex of degree at most $e-1$ (see \cite{west}, pg. 70).

When $K$ is a cycle, the problem becomes more difficult. 
Kim \cite{kimc4} showed that if $G$ contains no $4$-cycles or $3$-cycles, then 
$\chi(G) \leq (1+o(1))\Delta/\log\Delta$ as $\Delta \to \infty$,
which is within a factor of $2$ of the best possible bound.
Shortly after, Johansson \cite{johansson} showed that if $G$ contains no $3$-cycles,
then $\chi(G) \leq O(\Delta/\log\Delta)$.
Using Johansson's result, Alon, Krivelevich, and Sudakov \cite{aloncoloringsparse} showed that if
$K$ is any graph containing a vertex $x$ such that $K-x$ is bipartite, then
$\chi(G) \leq O(\Delta/\log\Delta)$.

Some analogous results for hypergraphs are known. 
Using the local lemma, one can show that $\chi(H) \leq O(\Delta^{1/(k-1)})$
for any $k$-uniform hypergraph $H$.
Bohman, Frieze, and the second author \cite{bfmhfree} showed that if $K$ is a 
fixed $k$-uniform hypertree on $e$ edges and 
$H$ is a $k$-uniform hypergraph containing no copy of $K$, then $\chi(H) \leq 2(k-1)(e-1) + 1$;
Loh \cite{lohtfree} improved this to $\chi(H) \leq e$, matching the result for graphs.

A hypergraph is \emph{linear} (or contains no $2$-cycles) if any two of its edges 
intersect in at most one vertex. A \emph{triangle} in a linear hypergraph is a set of three
pairwise intersecting edges with no common point.
In \cite{fmcoloring3}, Frieze and the second author showed that if $H$ is a $3$-uniform, linear,
triangle-free hypergraph, then $\chi(H) \leq O(\sqrt{\Delta}/\sqrt{\log \Delta})$.
They subsequently removed the triangle-free condition and generalized their 
result from $3$ to $k$,
showing that $\chi(H) \leq O( (\Delta/\log\Delta)^{1/(k-1)})$ for any $k$-uniform, linear
hypergraph $H$. As shown in \cite{bfmhfree}, these results are tight apart from the implied
constants.

\subsection{Our Result}
Our contribution is to remove the linear condition from~\cite{fmcoloring3}.
However, in doing so, we also widen the definition of a triangle.

\begin{definition}
A \emph{triangle} in a hypergraph $H$ is a set of three distinct edges $e,f,g \in H$
and three distinct vertices $u,v,w \in V(H)$ such that $u,v \in e$, $v,w \in f$, $w, u \in g$
and $\{u, v, w\} \cap e \cap f \cap g$.
\end{definition}
\noindent For example, the three triangles in a $3$-uniform hypergraph are the loose triangle
$C_3=\{abc, cde, efa\}$, $F_5=\{abc, bcd, aed\}$, and $K_4^-=\{abc, bcd, abd\}$.

Given a set $L(v)$ of colors for every vertex $v \in V(H)$,
a proper list coloring of $H$ is a proper coloring where
every vertex $v$ receives a color from $L(v)$.
The list chromatic number of $H$, $\chi_l(H)$, is the minimum $l$
so that if $|L(v)| \geq l$ for all $v$, then $H$ has a proper list coloring.
It is not hard to see that $\chi(H) \leq \chi_l(H)$.
As in \cite{kimc4} and \cite{johansson},
our main theorem can be stated in terms of list chromatic number.
If $H$ is a rank $k$ hypergraph and $i \leq k$, 
the $i$-degree of a vertex $v$ is the number of size $i$ edges containing $v$.
\begin{thm}\label{mainthm}
Suppose $H$ is a rank $3$, triangle-free hypergraph with 
maximum $3$-degree $\Delta$ and maximum $2$-degree $\Delta_2$.
Then
\[
 \chi_l(H) \leq c_1 \max\{ (\frac{\Delta}{\log{\Delta}})^{\frac{1}{2}}, \frac{\Delta_2}{\log{\Delta_2}} \},
\] 
for some constant $c_1$.
\end{thm}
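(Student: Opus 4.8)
The plan is to use the semi-random ("Rödl nibble" / entropy compression–free) coloring method in the style of Johansson and of Frieze–Mubayi, iterating a single "pseudo-random" coloring step that shrinks list sizes in a controlled way while shrinking a suitably defined notion of effective degree faster, so that after many rounds the lists dominate the degrees and a final greedy/Local Lemma argument finishes. Write $q = c_1\max\{(\Delta/\log\Delta)^{1/2}, \Delta_2/\log\Delta_2\}$. We may assume every list has size exactly $q$. At each round, every vertex $v$ retains each color $c$ in its current list $L(v)$ independently with some probability $p$ (the "activation" probability), and then a vertex is "colored" by $c$ only if it is the unique activated vertex on some tight-constraint; more precisely, for a $2$-edge $\{u,v\}$ we must avoid $u,v$ both keeping $c$, and for a $3$-edge $\{u,v,w\}$ we must avoid all three keeping $c$. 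The key random variables are, for each vertex $v$ and each surviving color $c$: the list size $L_v^{(t)}$, and a weighted "codegree/degree" quantity $T_v^{(t)}(c)$ that measures, over edges through $v$, how much color $c$ is still "threatened." The two different scalings in the max — $\Delta_2/\log\Delta_2$ for the $2$-edges and $(\Delta_3/\log\Delta_3)^{1/2}$ for the $3$-edges — come out because a $2$-edge imposes a linear-in-degree constraint whereas a $3$-edge imposes only a quadratic-in-activation constraint, exactly as in the uniform cases \cite{johansson,fmcoloring3}; the role of triangle-freeness is that it makes these threat quantities behave like sums of nearly independent contributions, so that one round of coloring multiplies $L_v$ by roughly $(1-p)$ but multiplies $T_v(c)$ by roughly $(1-p)^2$ (for the $3$-edge part) and by roughly $(1-\Omega(p))$ with an extra logarithmic gain for the $2$-edge part, giving a net shrinkage of the "degree-to-list ratio."

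The steps, in order, are: (1) Set up the iterative coloring process and state the invariants to be maintained at time $t$: $|L_v^{(t)}| \ge \ell_t$ and $T_v^{(t)}(c) \le \tau_t$ for all $v, c$, with $\tau_t/\ell_t$ decreasing geometrically; choose the target sequences $\ell_t, \tau_t$ and the per-round activation probability $p_t$ explicitly. (2) Compute the one-round expected changes: show $\Ex[|L_v^{(t+1)}|] \approx (1-p_t + o(\cdot))|L_v^{(t)}|$ and $\Ex[T_v^{(t+1)}(c)] \le (\text{contraction factor}) \tau_t$; this is where triangle-freeness enters, to control the expected number of monochromatic-threat configurations destroyed, and where the asymmetry between $2$- and $3$-edges produces the $\max$. (3) Prove concentration: each of $|L_v^{(t+1)}|$ and $T_v^{(t+1)}(c)$ is a function of the independent color-activation choices with bounded effect per coordinate (or, where needed, a configuration/martingale argument in the spirit of the "Simple Concentration Bound" named in the preamble), so each deviates from its mean by at most a small polylog factor with probability $1 - e^{-\Omega(\log^2 \Delta)}$ or better. (4) Apply the Asymmetric Local Lemma to conclude that with positive probability all the per-vertex and per-color invariants are simultaneously maintained, using that each "bad event" depends only on activations near $v$ so the dependency degree is polynomial in $\Delta, \Delta_2$ while the failure probabilities are superpolynomially small. (5) Iterate for $T = \Theta(\log\log \Delta)$ or $\Theta(\log \Delta)$ rounds (enough that $\tau_T \le \varepsilon \ell_T$), then finish: at the end every vertex has a list much larger than its remaining threat-degree, so a direct application of the Local Lemma (choosing each remaining color uniformly from its list) yields a proper list coloring of the still-uncolored part; combine with the partial coloring already built. (6) Finally, reduce the general case to the case $\Delta_2, \Delta_3$ both large (small $\Delta$ gives a bounded-degree graph/hypergraph handled by the trivial greedy bound absorbed into $c_1$), and handle the boundary where one of the two terms in the $\max$ dominates by verifying that the chosen $p_t, \ell_t, \tau_t$ are governed by whichever constraint is tighter.

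The main obstacle is step (2)–(3): getting the expected contraction of $T_v(c)$ to be genuinely faster than that of $|L_v|$ while simultaneously keeping the random variables concentrated, in a setting where edges of size $2$ and $3$ overlap arbitrarily (no linearity). Without the linear hypothesis of \cite{fmcoloring3}, two $3$-edges through $v$ may share a second vertex, and a $2$-edge may sit inside the "link" of a $3$-edge, so the naive second-moment / bounded-differences estimates acquire error terms indexed by pairs of edges sharing two vertices; triangle-freeness must be used precisely to show these degenerate configurations contribute only lower-order terms (a pair of edges $e, f$ with $|e \cap f| = 2$ together with a third edge closing a triangle is forbidden, which bounds how such pairs can interact). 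Making this rigorous — choosing the right weighting in the definition of $T_v(c)$ so that both the "mean contracts" computation and the "Lipschitz constant is small" computation go through — is the technical heart of the argument; the rest is a careful but standard bookkeeping of the iteration, following the templates of \cite{johansson} and \cite{fmcoloring3}.
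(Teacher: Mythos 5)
Your high-level plan is recognizably a semi-random nibble plus Local Lemma, and that is the right family of methods, but it is missing the central new mechanism that makes the non-linear rank-$3$ case work, and it misplaces the role of triangle-freeness. The key device in the paper is a family of auxiliary \emph{color graphs} $G_c$, one for each color $c$: when one vertex $w$ of a $3$-edge $uvw$ is colored $c$, a $2$-edge $uv$ is added to $G_c$ to record that $u$ and $v$ must not both later receive $c$; the original $2$-edges of $H$ are seeded into every $G_c$. Your ``threat'' quantity $T_v^{(t)}(c)$ gestures at this, but without the explicit $G_c$ bookkeeping there is no way to maintain a proper partial coloring across rounds: after a half-colored $3$-edge, the residual constraint is color-specific and pairwise, and it must accumulate (two vertices may end up with parallel constraints in several $G_c$'s precisely because $H$ is not linear). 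This construction, not the concentration estimates, is what removes the linearity hypothesis of Frieze--Mubayi.

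Two further points. First, triangle-freeness in the paper is used primarily to prove near-independence of survival events, i.e.\ bounds of the form $\Pr[c\notin L(u)\cup L(v)\cup L(w)]\le(1+1/\omega_0)\,q_u(c)q_v(c)q_w(c)$ for an edge $uvw$ and the analogous bound for $uv\in G_c$ (equations (3.2)--(3.4) and Claims 10--14): the point is that, once the $G_c$'s are in play, a triangle in $H\cup G_c$ is exactly what would make the ``threat'' sets at $u,v,w$ overlap, and none is ever created (Claim~\ref{trifreeclaim}). It is not, as you suggest, mainly about taming bounded-differences error terms from codegree-$\ge 2$ pairs; those are handled by a separate preprocessing step that replaces every pair with codegree $\ge \Delta^{6/10}$ by a single $2$-edge, which your plan omits but which is needed before the concentration bounds (the dyadic decomposition $N^0(u,l)$ in Claim~\ref{conccond}) have the right shape. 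Second, the paper follows Johansson/Vu in maintaining a probability vector $p_u^i(\cdot)$ with $w(p_u^i)\approx 1$ and tracking entropy $h_u^i$, rather than shrinking lists geometrically as in Kim's girth-$5$ argument; your $(1-p)$-per-round list contraction is a different and here less convenient invariant, since without $C_4$-freeness the list-size route does not give the needed contraction cleanly. So the proposal would need the $G_c$ mechanism, the codegree-truncation reduction, and the weight/entropy invariant to be turned into a working proof.
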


Theorem \ref{mainthm} generalizes the results of \cite{johansson} and \cite{fmcoloring3}.  Additionally, it  strengthens~\cite{fmcoloring3} by removing the linear hypothesis, which was a crucial ingredient in the proof. 
As mentioned above, for $n$-vertex 3-uniform hypergraphs $H$ with maximum degree $\Delta$, one can easily show that the independence number of $H$ is $\Omega(n/\sqrt \Delta)$ and  $\chi(H)= O(\sqrt{\Delta})$; however, adding a local restriction to the hypergraph in order to significantly improve either of these bounds appears to be a hard problem.  There are two conjectures in this regard.  De Caen \cite{decaen} conjectured that if we add the hypothesis that every vertex subset $S$ spans at most $c|S|^2$ edges (for some fixed constant $c$), and $\Delta=\Theta(n)$, 
then the lower bound on the independence number can be improved by a factor that tends to infinity with $\Delta$.
More recently, \cite{fmcoloring3} conjectured that if there is a fixed hypergraph $F$ with $F \not\subset H$, then $\chi(H)< c_F \sqrt{\Delta/\log \Delta}$.  
Guruswami and Sinop~\cite{guruswamisinop} showed that this conjecture implies certain hardness results in computer science.

We prove Theorem \ref{mainthm} by using a semi-random algorithm to properly color the hypergraph.
Our algorithm is similar to the algorithm in~\cite{fmcoloring3}, however,
several new ideas  are developed to deal with the non-linear case.
At each iteration, we randomly color a few of the vertices.
When a vertex in a $3$-edge is colored $c$, we add a $c$-colored $2$-edge between the remaining two
vertices to record the fact that those two vertices cannot both be colored $c$
in the future. \cite{fmcoloring3} assumed the hypergraph was linear, which
implied that at most one such $2$-edge could be added between two vertices.
Here we maintain a $2$-graph for every color and
allow two vertices to share an edge in multiple graphs. This allows us to
extend our algorithm to rank $3$ hypergraphs: for each $2$-edge in the original
hypergraph, we simply add a copy of that $2$-edge to every color graph.
After several iterations, we color the remaining vertices with the asymmetric version
of the local lemma. This prevents the $3$-edges from becoming monochromatic, while
also enforcing the constraints from the $2$-graphs.

\subsection{Application to Hypergraph Ramsey Numbers}

Let ${\cal C}^r_3$ be the collection of $r$-uniform hypergraph triangles. 
Notice that for graphs, $C^2_3$ consists of only the $3$-vertex cycle, and
for triple systems, $\mathcal{C}^3_3 = \{C_3, F_5, K_4^-\}$.
The hypergraph Ramsey number $R({\cal C}^r_3,K_t^r)$ is the smallest $n$
so that in every red-blue coloring of the edges of the complete $r$-uniform
hypergraph $K_n^r$, there exists a red triangle or a blue $K_t^r$.  
Ajtai-Koml\'os-Szemer\'edi~\cite{aksr3t} and Kim~\cite{kimr3t} proved that $R(\mathcal{C}^2_3, K_t^2)=\Theta(t^2/\log t)$.

In \cite{kmvr3}, Kostochka, the second author, and Verstra\"ete proved a
version of this result for $r=3$.
In this setting, $R(C_3, K_t^3)$ is the smallest $n$ so that in every red-blue coloring of
the edges of the complete $3$-uniform hypergraph $K_n^3$, there exists a
red $C_3$ or a blue $K_t^3$.
\cite{kmvr3} showed that there exist constants $a, b$
such that 
\[
\frac{a t^{3/2}}{(\log t)^{3/4}} \leq R(C_3, K_t^3) \leq b t ^{3/2},
\]
and they conjectured that the upper bound could be reduced to $o(t^{3/2})$.
We prove a weaker form of this conjecture, namely that
$R({\cal C}^3_3,  K_t^3) = O(t^{3/2}/\sqrt{\log t})$.
Since the $C_3$-free construction given in \cite{kmvr3} is also $F_5$
and $K_4^-$ free, this implies that for some constants $a$ and $b$,
\[
\frac{a t^{3/2}}{(\log t)^{3/4}} \leq R({\cal C}^3_3, K_t^3) \leq b \frac{t ^{3/2}}{(\log t)^{1/2}}.
\]
\subsection{Organization}
In Section 2, we present the probabilistic tools we will need to analyze our algorithm.
In Section \ref{sec_algorithm}, we describe our algorithm. 
The presentation is similar to Vu's description in \cite{vucoloring} of Johansson's algorithm. 
Section \ref{sec_analysis} contains an analysis of our algorithm. 
This analysis does not use triangle-free anywhere, but is instead based on
parameters which can be given to the algorithm.
In Section \ref{sec_trianglefree}, we show how triangle-free can be used to set
these parameters in a way that implies Theorem \ref{mainthm}.

\section{Tools}
\subsection{Local Lemma}
\begin{thmall}[\cite{molloyreed}]\label{asyll}
Consider a set $\mathcal{E} = \{A_1, \dots, A_n\}$ of (typically bad) events that such each $A_i$ is mutually
independent of $\mathcal{E}-(\mathcal{D}_i \cup A_i)$, for some $\mathcal{D}_i \subset \mathcal{E}$. If for each $1 \leq i \leq n$
\begin{itemize}
	\item $\Pr[A_i] \leq 1/4$, and
	\item $\sum_{A_j \in \mathcal{D}_i} \Pr[A_j] \leq 1/4$,
\end{itemize}
then with positive probability, none of the events in $\mathcal{E}$ occur.
\end{thmall}

\subsection{Concentration Theorems}
The first result is due to Hoeffding \cite{hoeffding}.
\begin{thm}\label{hoeffding}
Suppose that $X = X_1 + \dots + X_m$, where the $X_i$ are independent random variables
satisfying $|X_i| \leq a_i$ for all $i$. Then for any $t > 0$,
\[
\Pr[X \geq \Ex[X] + t] \leq e^{-\frac{2 t^2}{\sum_{i=1}^m a_i^2}},
\]
and
\[
\Pr[X \leq \Ex[X] - t] \leq e^{-\frac{2 t^2}{\sum_{i=1}^m a_i^2}}.
\]
\end{thm}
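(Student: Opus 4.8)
The plan is to run the standard exponential-moment (Chernoff) argument; I will only sketch the steps, since the result is classical. It suffices to prove the first inequality: the second follows by applying it to $-X=\sum_i(-X_i)$, noting that $|-X_i|\le a_i$ as well. So fix $t>0$ and estimate $\Pr[X\ge\Ex[X]+t]$.

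First I would recenter the summands. Set $Y_i:=X_i-\Ex[X_i]$, so the $Y_i$ are independent with $\Ex[Y_i]=0$, and each $Y_i$ is confined to an interval whose length is controlled by $a_i$. For any $\lambda>0$, Markov's inequality applied to $e^{\lambda\sum_i Y_i}$, together with independence, gives
\[
\Pr\!\left[\sum_i Y_i\ge t\right]\;\le\;e^{-\lambda t}\prod_i\Ex\!\left[e^{\lambda Y_i}\right].
\]
The heart of the matter is bounding each factor $\Ex[e^{\lambda Y_i}]$, i.e.\ proving \emph{Hoeffding's lemma}: a mean-zero random variable $Y$ supported in an interval of length $L$ satisfies $\Ex[e^{\lambda Y}]\le e^{\lambda^2 L^2/8}$. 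I would prove this by setting $\psi(\lambda):=\log\Ex[e^{\lambda Y}]$, noting that $\psi(0)=0$ and $\psi'(0)=\Ex[Y]=0$, and observing that $\psi''(\lambda)$ equals the variance of $Y$ under the law obtained by reweighting by $e^{\lambda Y}$; since $Y$ is confined to an interval of length $L$, that variance is at most $L^2/4$ (Popoviciu's inequality), so a second-order Taylor expansion of $\psi$ gives $\psi(\lambda)\le\lambda^2 L^2/8$. (Alternatively one argues directly from the convexity of $x\mapsto e^{\lambda x}$ on the interval.)

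Substituting $\Ex[e^{\lambda Y_i}]\le e^{\lambda^2 L_i^2/8}$, where $L_i\le 2a_i$ bounds the length of the range of $X_i$, into the product yields $\Pr[X-\Ex[X]\ge t]\le\exp\!\big(-\lambda t+\tfrac{\lambda^2}{8}\sum_i L_i^2\big)$, and then I would choose $\lambda$ to minimize the exponent; this produces the stated sub-Gaussian tail bound. The only step that is not routine bookkeeping is Hoeffding's lemma itself — centering, the exponential Markov inequality, factoring the moment generating function over the independent $Y_i$, and the final optimization in $\lambda$ are all standard — so I do not anticipate any genuine obstacle here.
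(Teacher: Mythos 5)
The paper does not prove this theorem; it simply cites Hoeffding. Your approach (exponential Markov, Hoeffding's lemma, optimize over $\lambda$) is the standard proof of Hoeffding's inequality and is correct as a strategy, so there is no competing argument to compare against.

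However, you assert at the end that carrying out the optimization ``produces the stated sub-Gaussian tail bound,'' and that is where the argument breaks down if you actually finish the calculation. From $|X_i|\le a_i$ you correctly get that $X_i$ lies in an interval of length $L_i\le 2a_i$, and Hoeffding's lemma then gives $\Ex[e^{\lambda Y_i}]\le e^{\lambda^2 L_i^2/8}$. Minimizing $-\lambda t+\tfrac{\lambda^2}{8}\sum_i L_i^2$ over $\lambda>0$ yields the exponent $-2t^2/\sum_i L_i^2$, and substituting $L_i=2a_i$ gives
\[
\Pr[X\ge \Ex[X]+t]\ \le\ \exp\!\left(-\frac{t^2}{2\sum_i a_i^2}\right),
\]
not $\exp\!\left(-\frac{2t^2}{\sum_i a_i^2}\right)$. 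These exponents differ by a factor of $4$. The bound stated in the theorem matches Hoeffding's inequality only if each $X_i$ is confined to an interval of \emph{length} $a_i$, e.g.\ $X_i\in[0,a_i]$; that is in fact the situation in the paper's applications (``non-negative random variables, each bounded by $\hat p$''), which explains the form quoted. But the hypothesis as written, $|X_i|\le a_i$, permits $X_i\in[-a_i,a_i]$, and your argument cannot close the factor-of-$4$ gap from that hypothesis. You should either strengthen the hypothesis to ``$X_i$ takes values in an interval of length $a_i$'' before running the Chernoff argument, or accept the weaker exponent $-t^2/(2\sum a_i^2)$. As it stands, the proposal does not establish the inequality it claims to prove, and the discrepancy would have surfaced had you written out the final optimization rather than asserting its outcome.
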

\noindent
We will also use the following theorem, which is Theorem 2.7 from \cite{concentration}.
\begin{thm}\label{hoeffdingvar}
Suppose that $X = X_1 + \dots + X_m$, where the $X_i$ are independent random variables
satisfying $X_i \leq \Ex[X_i] + b$ for all $i$. Then for any $t > 0$,
\[
\Pr[X \geq \Ex[X]+t] \leq e^{-\frac{t^2}{2\Var[X]+bt}}.
\]
\end{thm}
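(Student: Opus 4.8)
The plan is the standard exponential-moment (Chernoff--Bernstein) argument, with the observation that the one-sided hypothesis $X_i \le \Ex[X_i]+b$ is exactly what is needed to control a moment generating function from above. First I would recenter: put $Y_i = X_i - \Ex[X_i]$ and $Y = X - \Ex[X] = \sum_i Y_i$, so that the $Y_i$ are independent with $\Ex[Y_i]=0$, $Y_i \le b$, and $\sum_i \Ex[Y_i^2] = \sum_i \Var[X_i] = \Var[X] =: \sigma^2$ by independence. For every $\lambda>0$, Markov's inequality applied to $e^{\lambda Y}$ gives
\[
\Pr[X \ge \Ex[X]+t] = \Pr[Y \ge t] \le e^{-\lambda t}\,\Ex[e^{\lambda Y}] = e^{-\lambda t}\prod_{i=1}^m \Ex[e^{\lambda Y_i}],
\]
so everything reduces to bounding the individual factors $\Ex[e^{\lambda Y_i}]$.

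For the key step I would use that $g(x) := (e^x - 1 - x)/x^2$ (with $g(0)=\tfrac12$) is increasing on $\mathbb{R}$, together with the elementary bound $g(x) \le \tfrac{1/2}{1 - x/3}$ for $0 \le x < 3$, which follows from $e^x - 1 - x = \sum_{k\ge2} x^k/k! \le \tfrac12 x^2\sum_{j\ge0}(x/3)^j$ using $k! \ge 2\cdot 3^{\,k-2}$. Choosing $\lambda$ with $\lambda b < 3$ and writing $\phi := \tfrac{1/2}{1-\lambda b/3}>0$, monotonicity of $g$ at the argument $\lambda Y_i \le \lambda b$ gives $e^{\lambda Y_i} = 1 + \lambda Y_i + (\lambda Y_i)^2 g(\lambda Y_i) \le 1 + \lambda Y_i + \phi(\lambda Y_i)^2$. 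Taking expectations, the linear term vanishes, so $\Ex[e^{\lambda Y_i}] \le 1 + \phi\lambda^2\Ex[Y_i^2] \le \exp(\phi\lambda^2\Ex[Y_i^2])$; multiplying over $i$ yields $\Ex[e^{\lambda Y}] \le \exp(\phi\lambda^2\sigma^2)$ and hence
\[
\Pr[X \ge \Ex[X]+t] \le \exp\!\Big(-\lambda t + \frac{\lambda^2\sigma^2/2}{1-\lambda b/3}\Big).
\]

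Finally I would plug in the near-optimal value $\lambda = t/(\sigma^2 + bt/3)$, which satisfies $\lambda b < 3$ since $\sigma^2>0$. A one-line computation shows $1-\lambda b/3 = \sigma^2/(\sigma^2+bt/3)$, so the second term in the exponent equals $\tfrac12\lambda^2(\sigma^2+bt/3) = \tfrac12\lambda t$, and the exponent collapses to $-\tfrac12\lambda t = -t^2/(2\sigma^2 + 2bt/3) \le -t^2/(2\Var[X]+bt)$, which is the claimed bound. The degenerate cases $\sigma^2 = 0$ or $b = 0$ force every $X_i$, hence $X$, to be almost surely constant, so the left-hand side is $0$ for $t>0$. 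I do not expect a genuine obstacle: the only points needing care are the monotonicity of $(e^x-1-x)/x^2$ and the Taylor estimate on $[0,3)$, both routine. (Optimizing $\lambda$ exactly instead lands on Bennett's inequality $\exp(-\tfrac{\sigma^2}{b^2}h(bt/\sigma^2))$ with $h(u)=(1+u)\log(1+u)-u$, from which the stated form also follows via $h(u)\ge u^2/(2+2u/3)$.)
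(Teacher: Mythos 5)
Your argument is correct: it is the standard Chernoff--Bernstein derivation (recenter, bound $\Ex[e^{\lambda Y_i}]$ via the monotonicity of $(e^x-1-x)/x^2$ and the bound $k!\ge 2\cdot 3^{k-2}$, then optimize $\lambda$), and it in fact yields the stronger exponent $-t^2/(2\Var[X]+2bt/3)$, from which the stated bound follows since $2bt/3\le bt$. The paper offers no proof to compare against --- it imports this statement verbatim as Theorem 2.7 of the cited concentration survey --- so your self-contained derivation, including the degenerate cases $\Var[X]=0$ and $b=0$, is exactly what one would supply.
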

\noindent
McDiarmid \cite{mcdiarmid} proved the following generalization of Theorem \ref{hoeffding}.
\begin{thm}\label{azuma}
Let $Z_1,\dots,Z_n$ be independent random variables, with $Z_i$ taking
values in a set $\mathcal{A}_i$ for each $i$. Suppose that the (measurable) function
$g: \prod \mathcal{A}_k \to \mathbb{R}$ satisfies $|g(x)-g(x')| \leq d_i$
whenever the vectors $x$ and $x'$ differ only in the $i^{th}$ coordinate.
Let $W$ be the random variable $g(Z_1,\dots,Z_n)$. Then for any $t > 0$,
\[
\Pr[W > \Ex[W] + t) \leq e^{-2t^2 \sum_{i=1}^n d_i^2}
\]
\end{thm}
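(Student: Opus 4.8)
The plan is to prove this exactly the way one proves McDiarmid's bounded differences inequality: via the Azuma--Hoeffding martingale method. First I would introduce the Doob martingale associated with $W$. Set $W_i = \Ex[W \mid Z_1,\dots,Z_i]$ for $0 \le i \le n$, so that $W_0 = \Ex[W]$ is a constant and $W_n = W$. The increments $D_i = W_i - W_{i-1}$ form a martingale difference sequence for the filtration generated by $Z_1,\dots,Z_n$; in particular $\Ex[D_i \mid Z_1,\dots,Z_{i-1}] = 0$, and $W - \Ex[W] = \sum_{i=1}^n D_i$.

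The key structural step is to show that, conditioned on $Z_1,\dots,Z_{i-1}$, the increment $D_i$ almost surely lies in an interval of length at most $d_i$. Write $h_i(z_1,\dots,z_i) = \Ex[W \mid Z_1=z_1,\dots,Z_i=z_i]$. Because $g$ changes by at most $d_i$ when its $i$-th coordinate is altered, and because the remaining coordinates $Z_{i+1},\dots,Z_n$ are independent of the first $i$ and are integrated out identically regardless of $z_i$, the map $z_i \mapsto h_i(z_1,\dots,z_{i-1},z_i)$ has oscillation at most $d_i$. Consequently there is a $Z_1,\dots,Z_{i-1}$-measurable value $\ell_i$ with $\ell_i \le D_i \le \ell_i + d_i$; combined with $\Ex[D_i \mid Z_1,\dots,Z_{i-1}] = 0$ this is exactly the hypothesis needed to control the conditional moment generating function of $D_i$.

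Next I would invoke Hoeffding's lemma: any random variable $X$ with $\Ex[X] = 0$ taking values in an interval of length $L$ satisfies $\Ex[e^{\lambda X}] \le e^{\lambda^2 L^2/8}$ for all real $\lambda$. Applying it conditionally gives $\Ex[e^{\lambda D_i} \mid Z_1,\dots,Z_{i-1}] \le e^{\lambda^2 d_i^2/8}$, and peeling off the increments one at a time via the tower property yields $\Ex[e^{\lambda(W-\Ex[W])}] = \Ex\bigl[\prod_{i=1}^n e^{\lambda D_i}\bigr] \le e^{\lambda^2 \sum_i d_i^2 / 8}$. A Chernoff step then gives $\Pr[W > \Ex[W] + t] \le e^{-\lambda t}\,\Ex[e^{\lambda(W-\Ex[W])}] \le \exp\bigl(-\lambda t + \lambda^2 \sum_i d_i^2/8\bigr)$, and optimizing with $\lambda = 4t/\sum_i d_i^2$ produces the stated bound $\exp\bigl(-2t^2/\sum_{i=1}^n d_i^2\bigr)$ (the exponent in the displayed inequality should be read as a quotient).

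The main obstacle — indeed the only step that is not bookkeeping — is the second paragraph: verifying that the Doob increments $D_i$ have conditional range at most $d_i$. This is where independence of the $Z_k$ is essential; for dependent coordinates the conditional range of $D_i$ need not be bounded by $d_i$, which is precisely why the theorem is stated for independent $Z_i$. Once that estimate is in hand, Hoeffding's lemma, the telescoping of moment generating functions, and the Chernoff optimization are all routine.
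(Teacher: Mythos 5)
The paper states this result as a quoted theorem of McDiarmid and supplies no proof of its own, so there is no internal argument to compare against; your proof is the standard Doob-martingale/Azuma--Hoeffding derivation (essentially McDiarmid's original one) and it is correct, with the key point --- that the conditional oscillation of each Doob increment is at most $d_i$, using independence to integrate out the remaining coordinates identically --- properly identified. You are also right that the displayed exponent is a misprint and should be read as $e^{-2t^2/\sum_{i=1}^n d_i^2}$.
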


\noindent 
Note that in the above theorem, we may view $\prod\mathcal{A}_k$ as a
probability space induced by the random variables $Z_1,\dots,Z_n$.
We will use the following corollary, which resembles Theorem 7.2 
from \cite{concmeasure}.
\begin{cor}\label{bdbad}
Let $X_1,\dots,X_n$ be independent random variables, with $X_i$ taking
values in a set $\mathcal{B}_i$ for each $i$.
Let $\mathcal{A}_1, \dots, \mathcal{A}_n$ be events, 
where each $\mathcal{A}_i \subset \mathcal{B}_i$. 
Set $\mathcal{A} = \prod_{i=1}^n \mathcal{A}_i$.
Suppose that the (measurable) function
$f: \prod \mathcal{B}_k \to \mathbb{R}$ is non-negative and 
satisfies $|f(x)-f(x')| \leq d_i$
for any two vectors $x, x' \in \mathcal{A}$ differing only in the $i^{th}$ coordinate.
Let $Y$ be the random variable $f(X_1,\dots,X_n)$.
Then
\[
\Pr[Y > \Ex[Y]/\Pr[\mathcal{A}] + t] \leq e^{-2t^2/\sum_{i=1}^n d_i^2} + \Pr[\bar{\mathcal{A}}].
\]
\end{cor}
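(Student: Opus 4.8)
The plan is to reduce everything to McDiarmid's inequality (Theorem~\ref{azuma}) by passing to the probability measure conditioned on the event $\mathcal{A}$. We may assume $\Pr[\mathcal{A}] > 0$, since otherwise the right-hand side is at least $\Pr[\bar{\mathcal{A}}] = 1$ and there is nothing to prove. The crucial structural observation is that $\mathcal{A} = \prod_{i=1}^n \mathcal{A}_i$ is a \emph{product} event, so conditioning on it preserves independence of the coordinates: let $X_1',\dots,X_n'$ be independent random variables where $X_i'$ is distributed as $X_i$ conditioned on $\{X_i \in \mathcal{A}_i\}$. Then, by independence of the $X_i$, the vector $(X_1',\dots,X_n')$ has exactly the conditional law of $(X_1,\dots,X_n)$ given $\mathcal{A}$, and each $X_i'$ takes values in $\mathcal{A}_i$.

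First I would apply Theorem~\ref{azuma} to the random variable $W := f(X_1',\dots,X_n')$. Since each $X_i'$ lives in $\mathcal{A}_i$, only the values of $f$ on $\mathcal{A} = \prod_{i=1}^n \mathcal{A}_i$ are ever seen by $W$, and on that set the hypothesis guarantees that changing the $i^{th}$ coordinate changes $f$ by at most $d_i$; hence Theorem~\ref{azuma} gives $\Pr[W > \Ex[W] + t] \le e^{-2t^2/\sum_{i=1}^n d_i^2}$. Next I would compare the relevant expectations: because $f$ is non-negative, $\Ex[f(X)\,\mathbf{1}_{\mathcal{A}}] \le \Ex[f(X)]$, so
\[
\Ex[W] \;=\; \Ex\big[f(X) \,\big|\, \mathcal{A}\big] \;=\; \frac{\Ex[f(X)\,\mathbf{1}_{\mathcal{A}}]}{\Pr[\mathcal{A}]} \;\le\; \frac{\Ex[Y]}{\Pr[\mathcal{A}]}.
\]
Consequently $\{Y > \Ex[Y]/\Pr[\mathcal{A}] + t\} \subseteq \{Y > \Ex[W] + t\}$, and splitting according to whether $\mathcal{A}$ holds I would write
\begin{align*}
\Pr\!\left[Y > \frac{\Ex[Y]}{\Pr[\mathcal{A}]} + t\right]
&\le \Pr\big[\{Y > \Ex[W] + t\}\cap\mathcal{A}\big] + \Pr[\bar{\mathcal{A}}] \\
&= \Pr[\mathcal{A}]\cdot\Pr\big[Y > \Ex[W] + t \,\big|\, \mathcal{A}\big] + \Pr[\bar{\mathcal{A}}] \\
&\le \Pr[W > \Ex[W] + t] + \Pr[\bar{\mathcal{A}}] \\
&\le e^{-2t^2/\sum_{i=1}^n d_i^2} + \Pr[\bar{\mathcal{A}}],
\end{align*}
which is the desired bound.

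The only genuinely delicate point is the very first step: verifying that conditioning on the product event $\mathcal{A}$ leaves the coordinates independent, so that Theorem~\ref{azuma} may legitimately be invoked for the conditioned variables (and that $f$, viewed on $\prod_{i=1}^n \mathcal{A}_i$, inherits the stated bounded-differences constants $d_i$). Everything after that is routine bookkeeping; the hypothesis that $f \ge 0$ is used exactly once, to pass from $\Ex[f\,\mathbf{1}_{\mathcal{A}}]$ to $\Ex[f]$. (An alternative, if one prefers not to change measures, is to extend $f$ from $\mathcal{A}$ to a globally $d_i$-Lipschitz non-negative function $g$ on $\prod_{i=1}^n\mathcal{B}_i$ via $g(x) = \inf_{y\in\mathcal{A}}\big(f(y) + \sum_{i:\,x_i\ne y_i} d_i\big)$ and apply Theorem~\ref{azuma} to $g(X_1,\dots,X_n)$ directly; the product structure of $\mathcal{A}$ is again what makes $g$ agree with $f$ on $\mathcal{A}$.)
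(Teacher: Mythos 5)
Your proposal is correct and follows essentially the same route as the paper: condition on the product event $\mathcal{A}$ (which preserves independence of the coordinates), apply Theorem~\ref{azuma} to the conditioned variables, use non-negativity of $f$ to bound $\Ex[Y\,|\,\mathcal{A}]$ by $\Ex[Y]/\Pr[\mathcal{A}]$, and split according to whether $\mathcal{A}$ occurs. The Lipschitz-extension alternative you mention at the end is a nice aside but is not what the paper does.
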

\begin{proof}
Define $g:\mathcal{A} \to \mathbb{R}$ by $g(x) := f(x)$
(in other words, $g = f|\mathcal{A}$).
For each $i$, let $Z_i:X_i^{-1}(\mathcal{A}_i)\to\mathcal{A}_i$ 
be the random variable with $Z_i(s) = X_i(s)$ for all $s \in X_i^{-1}(\mathcal{A}_i)$.
Let $W$ be the random variable $g(Z_1,\dots,Z_n)$.
Since the $X_i$ are independent, the $Z_i$ are also independent, so
we will be able to apply Theorem \ref{azuma} to bound $\Pr[W > \Ex[W] + t]$.

By total probability and the non-negativity of $f$,
\[
\Ex[Y] = \Ex[Y | \mathcal{A}]\Pr[\mathcal{A}] 
 + \Ex[Y | \bar{\mathcal{A}}]\Pr[\bar{\mathcal{A}}]
\geq \Ex[Y | \mathcal{A}]\Pr[\mathcal{A}] 
\]
so
\[
\Ex[W] = \Ex[Y | \mathcal{A}]  \leq \Ex[Y]/\Pr[\mathcal{A}].
\]
Combining this with Theorem \ref{azuma} implies
\begin{align*}
\Pr[Y > \frac{\Ex[Y]}{\Pr[\mathcal{A}]} + t]
&= \Pr[Y > \frac{\Ex[Y]}{\Pr[\mathcal{A}]} + t | \mathcal{A}]\Pr[\mathcal{A}] + \Pr[Y > \frac{\Ex[Y]}{\Pr[\mathcal{A}]} + t | \bar{\mathcal{A}}]\Pr[\bar{\mathcal{A}}] \\
&\leq \Pr[Y > \frac{\Ex[Y]}{\Pr[\mathcal{A}]} + t | \mathcal{A}] + \Pr[\bar{\mathcal{A}}] \\
&\leq \Pr[Y > \Ex[Y | \mathcal{A}] + t | \mathcal{A}] + \Pr[\bar{\mathcal{A}}] \\
&= \Pr[W > \Ex[W] + t] + \Pr[\bar{\mathcal{A}}] \\
& \leq e^{-2t^2/\sum_{i=1}^n d_i^2} + \Pr[\bar{\mathcal{A}}].
\end{align*}
\end{proof}

\section{Coloring Algorithm}\label{sec_algorithm}
The input to our algorithm is a rank $3$ hypergraph with maximum $3$-degree $\Delta$
and maximum $2$-degree $\Delta_2$.
Let $H$ denote the input hypergraph restricted to its size $3$ edges,
and let $G$ denote the input hypergraph restricted to its size $2$ edges.
At the beginning, each vertex $u$ has a list $C(u)$ of acceptable colors.
We assume $|C(u)| = C$ for all vertices $u$.
For each vertex $u$ and color $c$, we set 
\[
p^0_u(c) = 
\begin{cases}
1/C, &\text{if $c \in C(u)$}\\
0, &\text{if $c \notin C(u)$}.
\end{cases}
\]
We define a parameter $\hat{p}$, which will serve as an upper bound on the weights $p^i_u(c)$.
Set $W^0(u) = \{p^0_u(c): c \in \cup_v C(v)\}$. We start with the hypergraph
$H^0 = H$ and the collection $\{W^0(u)\}_u$.
For each color $c$, we also construct a graph $G^0_c$, which is
initially a copy of the $2$-graph $G$. 
Finally, we assign to each vertex an empty set $B^0(u)$.

At the $(i+1)^{th}$ step, $i = 0, 1, \dots, T-1$, our input to
the algorithm is a quadruple, $(H^i, \{G^i_c\}_c, \{W^i_u\}_u, \{B^i(u)\}_u)$.
We generate a small random set of colors at each vertex $u$ as follows:
For each color $c$, we choose $c$ with probability $\theta p_u^i(c)$.
Let 
\[
\gamma^i_u(c) = 
\begin{cases} 
1, &\text{if $c$ is chosen at $u$},\\
0, &\text{otherwise.}
\end{cases}
\]
Note that the $\gamma^i_u(c)$ are independent random variables.

Consider a vertex $u$. 
We define the set of colors lost at $u$ as
\[
L(u) = \{c : \exists e \in E(H^i) \cup E(G^i_c) 
 \text{ such that } u \in e \text{ and } \gamma^i_v(c)=1 \text{ } \forall v \in e-u \}.
\]
We say a color $c$ \emph{survives} at $u$
if $c \notin B^i(u)$ and $c \notin L(u)$.
For $c \notin B^i(u)$, we define
\begin{equation}
  q^i_u(c) :=
  \Pr[c \text{ survives at } u] = 
  \Pr[ \bigcap_{uvw \in H^i} (\gamma^i_v(c)=0 \cup \gamma^i_w(c)=0)
       \bigcap_{uv \in G^i_c} \gamma^i_v(c)=0 ].
\end{equation}
In other words, if $c \notin B^i(u)$, then $q^i_u(c) = \Pr[c \notin L(u)]$.
Note that at the $(i+1)^{th}$ step, $q^i_u(c)$ is a fixed number, which
can be computed given $H^i$, $G_c^i$, and all of the $p^i_v(c)$;
it does not depend on the random variables $\gamma^i_u(c)$.
In the analysis below, we will use the bound
\begin{align}\label{qbound}
q^i_u(c) 
&= 1- \Pr[ \bigcup_{uvw \in H^i} (\gamma^i_v(c)=1 \cap \gamma^i_w(c)=1)
  \bigcup_{uv \in G^i_c} \gamma_v(c)=1 ] \notag \\
&\geq 1-\sum_{uvw \in H^i} \theta^2 p^i_v(c)p^i_w(c)
 -\sum_{uv \in G^i_c} \theta p^i_v(c).
\end{align}
Let $\I[X]$ denote the $0,1$ indicator variable for the event $X$.
Define $p^{i+1}_u(c)$ as:
\begin{itemize}
\item If $p^i_u(c)/q^i_u(c) < \hat{p}$ and $c \notin B^i(u)$, then
 \begin{equation}\label{flip1}
   p^{i+1}_u(c) = p^i_u(c)\frac{\I[c \text{ survives at } u]}{\Pr[c \text{ survives at } u]}
   =
   \begin{cases} 
    p^i_u(c)/q^i_u(c) , &\text{if $c$ is survives at $u$},\\
    0, &\text{else}.
   \end{cases}
 \end{equation}
\item If $p^i_u(c)/q^i_u(c) \geq \hat{p}$ or $c \in B^i(u)$, 
  then we toss a biased coin with $\Pr[Head] = p^i_u(c)/\hat{p}$.
  We then set
  \[
  \eta^i_u(c) = \I[Head],
  \]
  and
  \begin{equation}\label{flip2}
    p^{i+1}_u(c) = p^i_u(c)\frac{\I[Head]}{\Pr[Head]}
    =
    \begin{cases}
      \hat{p}, &\text{if $\eta^i_u(c) = 1$} \\
      0, &\text{else}.
    \end{cases}
  \end{equation}
\end{itemize}
Crucially, \eqref{flip1} and \eqref{flip2} imply 
\begin{equation}\label{expuc}
\Ex[p^{i+1}_u(c)] = p^i_u(c).
\end{equation}

Color $u$ with $c$ if $c$ survives at $u$ and $\gamma^i_u(c)=1$ (if there are multiple such $c$,
pick one arbitrarily). Let $U^{i+1}$ denote the set of uncolored vertices
in $H$ after the iteration $i$.
Let $H^{i+1}$ be the hypergraph induced from $H$ by $U^{i+1}$, let $B^{i+1}(u) = \{c : p^{i+1}_u(c)=\hat{p}\}$,
and let $W^{i+1}_u = \{p^{i+1}_u(c)\}$. 
To form $G_c^{i+1}$, start with $G_c^i$, and for each triple
$u,v,w \in U^{i}$ with $u,v \in U^{i+1}$, $uv \notin G_c^{i}$, and $w$ colored $c$,
add an edge $uv$ to $G^{i+1}_c$. Then delete any vertex from $G_c^{i+1}$ that is not in $U^{i+1}$.

Observe that if $uvw$ is an edge in $H^i$ and $u$ and $v$ are both colored $c$ in the current
round, then $p^{i+1}_w(c) \in \{0, \hat{p}\}$; in particular, $c$ is never considered for $w$
in a future round. Similarly, if $vw \in G^i_c$ and $v$ is colored with $c$ in the current round,
then $c$ is never considered for $w$ in the future. Thus the algorithm always maintains
a proper partial coloring of $H$.

After $T$ iterations, some vertices will remain uncolored. We color these
in one final step, which is described in Section \ref{sec_finalstep}.

\subsection{Parameters and Notation}
We summarize all of the variables used in the algorithm and its analysis in the two tables below. 
The first table contains descriptions of the independent variables in our algorithm.
We set them for one family of hypergraphs in Section \ref{sec_trianglefree}, 
when we prove that our algorithm works for triangle-free hypergraphs. 
The values of the remaining parameters are defined in the second table.

Our algorithm requires that the parameter $\omega_0$ satisfy the following properties:
\begin{itemize}
\item For any edge $uvw$ in $H^i$ and any color $c$,
\begin{equation}\label{prcnotinuvw}
\Pr[c \notin L(u) \cup L(v) \cup L(w)] \leq q^i_u(c)q^i_v(c)q^i_w(c)(1+1/\omega_0).
\end{equation}
\item For any color $c$ and any pair $u,v$ with $uvw \in H^i$ for some $w$,
\begin{equation}\label{prcnotinuvh}
\Pr[c \notin L(u) \cup L(v)] \leq q^i_u(c)q^i_v(c)(1+1/\omega_0).
\end{equation}
\item For any color $c$ and any edge $uv$ in $G^i_c$,
\begin{equation}\label{prcnotinuv}
\Pr[c \notin L(u) \cup L(v)] \leq q^i_u(c)q^i_v(c)(1+1/\omega_0).
\end{equation}
\end{itemize}
The parameters $\omega_1$ through $\omega_6$ are error terms used
in the analysis of the algorithm.

\begin{tabular}{l l}
 & Description \\
\hline\hline
$\Delta$ & Maximum degree of $3$-graph \\
$\Delta_2$ & Maximum degree of $2$-graph \\
$\delta$ & Maximum codegree \\
$\omega$ & Color bound, tending to $\infty$ with $\Delta$ \\
$\epsilon$ & Small constant \\
$\omega_0$ & Error term depending on $H$ \\
$\hat{p}$ & Threshold probability \\ \hline
\end{tabular}

\begin{tabular}{l l l}
 & Value & Description \\
\hline\hline
$C$ & $\sqrt{\Delta}/\sqrt{\omega}$ & Number of colors \\
$T$ & $(5\omega/\epsilon) \log \omega$ & Number of iterations \\
$\theta$ & $\epsilon/\omega$ & Activation probability \\
$m$ & $21$ & Used to control codegrees \\
$\omega_1$ & $T\log C$ & Error term \\
$\omega_2$ & $\omega_0 / 16\omega$ & Error term \\
$\omega_3$ & $\omega^2$ & Error term \\
$\omega_4$ & $\omega^2$ & Error term \\
$\omega_5$ & $\Delta^{19/20}$ & Error term \\
$\omega_6$ & $\Delta^{1/4}$ & Error term \\
\hline
\end{tabular}

\noindent
We will use the following notation:
\begin{align*}
N^i_H(u) &= \{v \in V(H^i)-u: \exists e \in H^i \text{ with } u,v \in e\} \\
N^i_H(u,v) &= \{w \in V(H^i)-\{u,v\}: \{u,v,w\} \in H^i \} \\
N^i_c(u) &= \{v \in V(G^i_c) -u: \exists e \in G^i_c \text{ with } u,v \in e\} \\
N^i(u) &= N_H^i(u) \cup \cup_c N^i_c(u) \\
N^0_G(u) &= \{v \in V(G): uv \in E(G)\} \\
d^i_H(u) &= |\{e \in H^i: u \in e\}| \\
d^i_H(u,v) &= |\{e \in H^i: u,v \in e\}| \\
d^i_{G_c}(u) &= |\{v \in G^i_c: uv \in G_c\}|.
\end{align*}
At the beginning of iteration $i$ of the algorithm, we also define the
following parameters:
\begin{align*}
w(p_u^i) &= \sum_c p_u^i(c) \\
f_u^i(c) &= \sum_{uv \in G^i_c}p_v^i(c) \\
f_u^i &= \sum_c \sum_{uv \in G^i_c} p_u^i(c)p_v^i(c) \\
e_{uvw}^i &= \sum_c p_u^i(c)p_v^i(c)p_w^i(c) \\
e_u^i & = \sum_{uvw \in H^i} e_{uvw}^i \\
e_u^i(c) & = \sum_{uvw \in H^i} p_v^i(c)p_w^i(c) \\
h_u^i & = -\sum_c p_u^i(c)\log p_u^i(c), \text{ where $x\log x := 0$ if $x=0$ }.
\end{align*}

\noindent
Our analysis assumes that the parameters of the algorithm satisfy the following relations.
All asymptotic notation assumes $\Delta \to \infty$.
\begin{enumerate}[  ({R}1)]
\item $\theta \log(\hat{p}C) \geq 85$
\item $1/\omega_0 = o(\theta)$
\item $2/\omega_1^2 C \hat{p}^2 > 6 \log{\Delta}$
\item $T/\omega_1 = o(1)$
\item $(T\log C) / \omega_1 < \epsilon/\theta$

\item $2/(4\Delta^2 \omega_2^2 C \hat{p}^6) > 6 \log{\Delta}$
\item $\theta T / \omega_2 = o(1)$
\item $\omega \omega_2 + T < \omega_0 /2$

\item $1/\omega_2 \leq (1-\theta/4)^T \omega$

\item $1/(4\omega_3^2(6\omega_6T\theta\hat{p}^5\Delta^2 + 4m\hat{p}^5\Delta^{2+1/2m} + Cm^2\hat{p}^6\Delta^{2+1/m})) \geq 7\log\Delta$
\item $2/(4\omega_3^2C(m\Delta^{1+1/2m}\hat{p}^3 + \delta\Delta^{1/2+1/2m}\hat{p}^3)^2) \geq 7\log\Delta$

\item $2/(\omega_4^2 C (-\hat{p}\log\hat{p})^2) > 6\log{\Delta}$
\item $1/\omega_4 \leq \epsilon(1-\theta/4)^T$

\item $2\omega_5^2/ (C(m\Delta^{1+1/2m}\hat{p} + \Delta^{1/2 + 1/2m}\hat{p}\delta)^2) \geq 7\log\Delta$
\item $\omega_5 < (\theta/6)(1-\theta/3)^T\Delta$

\item $\omega_6 \Delta \theta  \hat{p} / (5\delta) \geq 6\log\Delta$
\item $\theta \omega (1-\theta/4)^T \geq \theta T / \omega_2 + 1/\omega_3$
\item $1-10\epsilon \geq 3/4$
\item $\Delta_2 \leq \omega_6\theta\Delta\hat{p}$

\item $\Delta_2 \leq \sqrt{\Delta}\sqrt{\omega}$
\item $\hat{p} \geq \Delta^{-1/2}$.
\end{enumerate}
\noindent
The analysis in Section \ref{sec_analysis} only requires that
\eqref{prcnotinuvw}, \eqref{prcnotinuvh}, \eqref{prcnotinuv}, and (R1)-(R21) hold;
the parameters $\omega$, $\epsilon$, $\hat{p}$, and $\omega_0$ 
depend on the structure of the hypergraph.
For instance, we will use the following bounds when applying the analysis to
triangle-free hypergraphs.
\begin{claim}\label{paramsclaim}
The following inequalities are consistent, and if they hold, then (R1)-(R21) also hold:
\begin{align*}
\epsilon &\leq 1/40 & \Delta_2 &\leq \sqrt{\Delta}\sqrt{\omega}  \\
\omega &< (1/26)(\epsilon/86)\log \Delta & \delta &\leq \Delta^{6/10} \\
\omega_0 &> 10\omega^3\log\omega & \hat{p} &> e^{86\omega/\epsilon}\sqrt{\omega}/\sqrt{\Delta} \\
& & \hat{p} &\leq \Delta^{-11/24}. \\
\end{align*}
\end{claim}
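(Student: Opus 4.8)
The plan is to prove the Claim by brute substitution. For the second assertion, we substitute into each of (R1)--(R21) the values of $C,T,\theta,m,\omega_1,\dots,\omega_6$ from the second parameter table, which turns every relation into an elementary inequality among the ``free'' quantities $\Delta,\omega,\epsilon,\omega_0,\hat p$ together with the structural bounds on $\Delta_2$ and $\delta$; we then read off its validity from the four displayed inequalities of the Claim and the hypothesis $\Delta\to\infty$. For the first assertion (consistency) it suffices to exhibit one valid setting: fix $\epsilon=1/40$, let $\omega=\omega(\Delta)$ be any function with $\omega\to\infty$ and $\omega<(1/26)(\epsilon/86)\log\Delta$, set $\omega_0=\lceil 11\,\omega^3\log\omega\rceil$, and take $\hat p$ to be any value in the window $(e^{86\omega/\epsilon}\sqrt\omega\,\Delta^{-1/2},\ \Delta^{-11/24}]$. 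This window is nonempty for large $\Delta$: the bound $\omega<(1/26)(\epsilon/86)\log\Delta$ forces $e^{86\omega/\epsilon}<\Delta^{1/26}$, hence $e^{86\omega/\epsilon}\sqrt\omega<\Delta^{1/26}\sqrt\omega<\Delta^{1/24}$ since $\omega$ is polylogarithmic in $\Delta$.

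It is convenient to organize (R1)--(R21) by shape. The ratio and $o(1)$ relations (R2, R4, R5, R7) are immediate once the table values are inserted: $T/\omega_1=1/\log C$, $(T\log C)/\omega_1=1<\omega$, $\theta T/\omega_2=80\omega\log\omega/\omega_0$, and $1/\omega_0<1/(10\omega^3\log\omega)$, each clearly small enough. The relations (R1), (R18), (R19), (R20), (R21) are direct consequences of the displayed bounds on $\epsilon$, $\hat p$, and $\Delta_2$; for instance $\hat pC>e^{86\omega/\epsilon}$ gives $\theta\log(\hat pC)>(\epsilon/\omega)(86\omega/\epsilon)=86$ for (R1), while (R19) holds because $\omega_6\theta\Delta\hat p\ge\epsilon e^{86\omega/\epsilon}\Delta^{3/4}/\sqrt\omega$ dominates $\sqrt\Delta\sqrt\omega\ge\Delta_2$. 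The relations carrying a survival factor (R8, R9, R13, R15, R16, R17) use $\theta T=5\log\omega$ and $\theta^2T=o(1)$ to get $(1-\theta/4)^T\ge\tfrac12\omega^{-5/4}$ and $(1-\theta/3)^T\ge\tfrac12\omega^{-5/3}$ for large $\Delta$, after which each reduces to a statement that some polylogarithm of $\Delta$ is bounded by a constant or by a small power of $\Delta$ (e.g. (R15) to $\omega^{8/3}\le(\epsilon/12)\Delta^{1/20}$, (R16) to $\epsilon e^{86\omega/\epsilon}\Delta^{0.15}/(5\sqrt\omega)\ge 6\log\Delta$, (R8) to $\omega_0>(80\omega/7\epsilon)\log\omega$), all implied by the Claim's bounds.

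The remaining relations (R3, R6, R10, R11, R12, R14) are the crux, and I expect them to be the main obstacle -- not because any single one is deep, but because several are genuinely tight and require careful exponent arithmetic. Each has the shape ``(a constant, or $2\omega_5^2$), divided by a product of powers of $\Delta,\omega,\hat p,\delta$, is at least $c\log\Delta$''. After substitution one isolates the dominant term of the inner sum -- which, when $\delta$ is as large as allowed, is the $\delta\Delta^{1/2+1/2m}\hat p^{\,k}$ term -- and bounds it via $\delta\le\Delta^{6/10}$ and $\hat p\le\Delta^{-11/24}$; the point is that the product in the denominator then carries a net power $\Delta^{-\rho}$ with $\rho>0$ a constant, so the left side exceeds $c\log\Delta$ because $\Delta^\rho$ beats every power of $\log\Delta$. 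For (R11), for instance, the surviving exponent works out to $\tfrac12+2\left(\tfrac6{10}+\tfrac12+\tfrac1{42}-\tfrac{11}{8}\right)=-\tfrac1{420}$, so the left side is $\ge\Delta^{1/420}/(O(1)\,\omega^{7/2})>7\log\Delta$; (R14) is similar with surviving exponent $\tfrac{29}{420}$, and (R3), (R6), (R10), (R12) are comfortably looser (surviving exponent $\tfrac5{12}$ or $\tfrac14$), with (R6) additionally using that the chosen $\omega_0=\Theta(\omega^3\log\omega)$ keeps the $\omega_2$-dependent factor small. The only real care needed here is to keep track of which of the Claim's four bounds is invoked, and in which direction, in each of these six computations.
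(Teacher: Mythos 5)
Your proposal is correct and follows the same approach as the paper: the paper likewise verifies consistency by the single exponent check $e^{86\omega/\epsilon}\sqrt{\omega}/\sqrt{\Delta}<\Delta^{1/26-1/2}\sqrt{\omega}=\Delta^{-6/13}\sqrt{\omega}\leq\Delta^{-11/24}$, and then simply asserts that checking (R1)--(R21) by substitution "is straightforward." Your write-up supplies the routine verification that the paper leaves to the reader, but the method is the same brute substitution.
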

\begin{proof}
The bounds on $\omega$ and $\epsilon$ imply
\[
\frac{e^{86\omega/\epsilon}\sqrt{\omega}}{\sqrt{\Delta}}
< \Delta^{1/26 - 1/2}\sqrt{\omega}
= \Delta^{-6/13}\sqrt{\omega}
\leq \Delta^{-11/24},
\]
so the inequalities are consistent.
Checking that they satisfy (R1)-(R21) (for $\Delta$ sufficiently large) is straightforward.
\end{proof}

\section{Analysis of Algorithm}\label{sec_analysis}
\begin{thm}\label{analysisthm}
If \eqref{prcnotinuvw}, \eqref{prcnotinuvh}, \eqref{prcnotinuv}, and (R1)-(R21) 
hold and $|C(u)| \leq C$ for all vertices $u$, 
then the algorithm produces a proper list coloring of $H \cup G$.
\end{thm}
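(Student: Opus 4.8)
The plan is to track, through the $T$ iterations, a collection of invariants on the quadruple $(H^i,\{G^i_c\}_c,\{W^i_u\}_u,\{B^i(u)\}_u)$ that simultaneously (a) keep the weights $p^i_u(c)$ bounded above by $\hat p$, (b) keep the total weight $w(p^i_u)=\sum_c p^i_u(c)$ large — say at least $1/\omega_2$ — and more importantly keep the entropy $h^i_u$ large, (c) keep the various interaction sums ($e^i_u$, $f^i_u$, $f^i_u(c)$, $e^i_u(c)$, degrees $d^i_H(u)$, $d^i_{G_c}(u)$, codegrees) under control, and (d) ensure that at every stage the partial coloring is proper. Invariant (d) was already verified structurally in Section~\ref{sec_algorithm}. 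The heart of the argument is a single inductive step: assuming all invariants hold at the start of iteration $i+1$, show that after the random choices $\gamma^i_u(c)$ and the coin flips $\eta^i_u(c)$ they hold again at iteration $i+2$, with the error parameters $\omega_1,\dots,\omega_6$ absorbing the accumulated slack. Each invariant is the conjunction of many events, one per vertex (or per vertex-color, or per edge), so the induction step is: (i) compute the one-step conditional expectation of each quantity — this is where \eqref{expuc}, the bounds \eqref{qbound}, and \eqref{prcnotinuvw}--\eqref{prcnotinuv} enter, the latter three guaranteeing that survival events at the two or three vertices of an edge are nearly independent so that expectations roughly multiply; (ii) show the quantity is concentrated around its expectation using Theorem~\ref{hoeffding}, Theorem~\ref{hoeffdingvar}, or — in the cases where the Lipschitz bound only holds on a good event — Corollary~\ref{bdbad}; (iii) take a union bound over all vertices/edges. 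The relations (R1)--(R21) are exactly the inequalities one needs: each "$2/(\cdots\,\omega_j^2\,\cdots)>6\log\Delta$" or "$\cdots\geq 7\log\Delta$" is the requirement that the corresponding concentration bound beat $\Delta^{-5}$ or so, small enough to survive the union bound over $O(\Delta^c)$ events, while the "$=o(1)$" and multiplicative relations ensure the per-step drift, compounded $T$ times, does not destroy the invariant. After $T$ rounds, invariant (b) gives every uncolored vertex a list (its surviving colors, renormalized) of size $\gtrsim \hat p\,C\,(1-\theta/4)^T \gg 1$ relative to its degree into $H^T\cup\bigcup_c G^T_c$ — which invariant (c) has kept bounded — so the final step of Section~\ref{sec_finalstep} colors $U^T$ by one application of the Asymmetric Local Lemma: the bad events are "edge $uvw\in H^T$ monochromatic" with probability $\lesssim \hat p^2$ times a count, and "endpoints of $uv\in G^T_c$ both receive $c$" with probability $\lesssim\hat p^2$, and the degree conditions from (c) make both conditions of the Local Lemma hold.

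Concretely, I would carry out the steps in this order. First, state the full list of invariants at iteration $i$ — upper bound $p^i_u(c)\leq\hat p$; lower bound on $w(p^i_u)$ and on $h^i_u$ (the latter decaying like $(1-\theta/4)^i$ times the initial entropy, so that after $T=(5\omega/\epsilon)\log\omega$ steps it is still $\Omega(\log\hat p C)$, which is where (R1) and (R3)--(R5) live); bounds $e^i_u=O(\cdots)$, $f^i_u=O(\cdots)$, $d^i_H(u)\leq \Delta$, $d^i_{G_c}(u)$ small (using $m$ and $\delta$ — (R10),(R11),(R14)--(R16),(R19)), codegree $d^i_H(u,v)\leq\delta$. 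Second, the entropy step: this is the Johansson-type calculation, showing $\Ex[h^{i+1}_u]\geq (1-\theta/4)h^i_u$ using that $-x\log x$ is concave and that on average a color's weight either stays put or multiplies by $1/q^i_u(c)\geq 1$; concentrate via Corollary~\ref{bdbad} since the Lipschitz constant $-\hat p\log\hat p$ per coordinate is only valid when weights are bounded, using (R12)--(R13). Third, the "number of colors" / weight step: $\Ex[w(p^{i+1}_u)]=w(p^i_u)$ by \eqref{expuc}, concentrate, use (R3)--(R5),(R9). Fourth, the edge-interaction steps $e^i_u,f^i_u,e^i_u(c),f^i_u(c)$: here one shows each quantity shrinks by roughly $(1-\theta)^{2}$ or stays controlled, using the near-independence bounds \eqref{prcnotinuvw}--\eqml{prcnotinuv}; concentrate via Theorem~\ref{hoeffdingvar} (the variance version, since these sums are very skewed — a single color contributes at most $\hat p^2$ but the mean is much smaller), controlled by (R6)--(R8),(R17). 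Fifth, the degree and codegree steps: $d^i_{G_c}(u)$ grows as vertices get colored $c$; bound its expectation and concentrate with Theorem~\ref{hoeffding}, the parameter $m=21$ entering to keep $\Delta^{1/2m}$-type corrections negligible, controlled by (R10),(R11),(R14)--(R16),(R19). Sixth, assemble: after $T$ iterations verify the Local Lemma hypotheses for the final coloring and conclude.

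The main obstacle — and the place where removing the linear hypothesis of~\cite{fmcoloring3} really bites — is the control of the graphs $G^i_c$ and the codegree parameter $\delta$. In the linear case at most one $c$-edge is ever added between a fixed pair, so $\sum_c d^i_{G_c}(u)$ is automatically $O(\Delta)$; here a pair $u,v$ can acquire a $c$-edge for many colors $c$ (one for each common neighbor $w$ in $H$ that gets colored), so $f^i_u(c)=\sum_{uv\in G^i_c}p^i_v(c)$ and the total $\sum_c f^i_u(c)$ can be as large as $\delta\cdot(\text{something})$, and it is precisely the codegree bound $\delta\leq\Delta^{6/10}$ in Claim~\ref{paramsclaim} (feeding (R11),(R14),(R16)) that keeps this from overwhelming the entropy gain. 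Getting the bookkeeping right so that the $G_c$-contribution to $q^i_u(c)$ in \eqref{qbound} — the term $\sum_{uv\in G^i_c}\theta p^i_v(c)$ — stays $o(1)$ throughout all $T$ rounds, while the $G_c$'s are themselves growing, is the delicate part; everything else is a (lengthy) variation on the now-standard Johansson/Molloy–Reed/Vu entropy-compression-via-concentration template, with the asymmetric Local Lemma at the end playing the role it played in~\cite{fmcoloring3}.
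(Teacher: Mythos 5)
Your proposal correctly identifies the architecture that the paper actually uses: the proof of Theorem~\ref{analysisthm} is a one-line delegation to a Main Lemma (the invariants (P1)--(P6) of Lemma~\ref{mainlemma}), a per-step drift-plus-concentration lemma (the (Q1)--(Q6) of Lemma~\ref{auxlemma}), a bound on the blocked-color sets $|B^i(u)|$ (Lemma~\ref{bbound}), and a final asymmetric Local Lemma step. You name the right tools (Hoeffding, the Lipschitz corollary with a good event $\mathcal A$, the asymmetric LLL), read (R1)--(R21) correctly as the bookkeeping needed for the union bound and the geometric compounding, and correctly pinpoint the growth of the graphs $G^i_c$ and the codegree $\delta$ as the cost of dropping the linearity hypothesis.

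However, your description of the entropy step is wrong in a way that would sink the argument if carried out as written. You claim $\Ex[h^{i+1}_u]\geq(1-\theta/4)h^i_u$, i.e.\ multiplicative decay, and that $h^i_u$ decays like $(1-\theta/4)^i$ times its initial value; but then after $T=(5\omega/\epsilon)\log\omega$ steps one would have $h^T_u\approx(1-\theta/4)^T\log C\approx\omega^{-5/4}\log C\to 0$, which contradicts your own assertion (in the same sentence) that $h^T_u=\Omega(\log(\hat pC))$. What the paper actually proves ((Q4) and (P4)) is an \emph{additive} drop $h^i_u-h^{i+1}_u\leq 2\theta(f^i_u+e^i_u)+1/\omega_4$, and the point of tracking $e^i_u$ and $f^i_u$ as separate invariants is precisely that \emph{they} decay geometrically ((P2), (P3)); hence the per-step entropy loss is $O(\epsilon(1-\theta/4)^i)$ and the total loss over $T$ rounds is $O(\epsilon/\theta)=O(\omega)=o(\log C)$, leaving $h^T_u\geq(1-o(1))\log C$. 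Without this distinction the final step has no colors to work with. Relatedly, your statement that $w(p^i_u)$ is kept ``at least $1/\omega_2$'' is far weaker than (P1), which keeps $w(p^i_u)$ within $T/\omega_1=o(1)$ of $1$ and is what Lemma~\ref{bbound} needs; and your attribution of Theorem~\ref{hoeffdingvar} to the $e^i_u,f^i_u$ steps is off (the paper uses Theorem~\ref{hoeffding} for (Q2), Corollary~\ref{bdbad} for (Q3) and (Q5), and the variance bound only for (Q6)).
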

\begin{proof}
By Lemma \ref{mainlemma}, our algorithm proceeds for $T$ iterations, coloring most of the vertices.
Since Lemmas \ref{mainlemma}, \ref{auxlemma} and \ref{bbound} hold after iteration $T$,
we may color the remaining vertices as described in Section \ref{sec_finalstep}.
\end{proof}

\begin{lemma}[Main Lemma]\label{mainlemma}
If \eqref{prcnotinuvw}, \eqref{prcnotinuvh}, \eqref{prcnotinuv}, and (R1)-(R21) hold, 
then for each $i=0,1,\dots\,T$, the following properties hold:
\begin{enumerate}[({P}1)]
\item $|1 - w(p^i_u)| \leq i/\omega_1$.
\item $e^i_u \leq (1-\theta/3)^i\omega + i/\omega_2$
\item $f^i_u \leq 8(1-\theta/4)^i\omega$
\item $h^i_u \geq h^0_u - 21\epsilon \sum_{j=0}^{i-1}(1-\theta/4)^j$
\item $d_H^i(u) \leq (1-\theta/3)^i \Delta$
\item $d^i_{G_c}(u) \leq 3 \omega_6 i\theta\Delta\hat{p}$.
\end{enumerate}
\end{lemma}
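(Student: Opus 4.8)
The plan is to prove Lemma \ref{mainlemma} by induction on $i$, establishing all six properties (P1)--(P6) simultaneously. The base case $i=0$ is immediate: $w(p^0_u) = |C(u)|/C \leq 1$ so (P1) holds; $e^0_u \leq \sum_{uvw} (1/C)^3 \cdot (\text{\# colors}) \leq \Delta/C^2 = \omega$ gives (P2); similarly (P3) follows since $G^0_c$ is a copy of $G$ and $f^0_u = \sum_c \sum_{uv\in G} (1/C)^2 \leq \Delta_2/C \leq \sqrt\omega \leq 8\omega$ using (R20); (P4) is trivial as the right side equals $h^0_u$; (P5) is just $d^0_H(u) \leq \Delta$; and (P6) holds since $G^0_c = G$ has max degree $\Delta_2 \leq \omega_6\theta\Delta\hat p$ by (R19), and the right side is $0$ at $i=0$ --- wait, this needs care, so I would actually phrase (P6) to absorb the initial $\Delta_2$ term, or note $d^0_{G_c}(u) \leq \Delta_2 \leq \omega_6\theta\Delta\hat p \leq 3\omega_6\cdot 1\cdot\theta\Delta\hat p$ once $i\geq 1$ and handle $i=0$ separately.

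For the inductive step, assume (P1)--(P6) hold for $i$; I would prove each property for $i+1$ by computing the conditional expectation of the relevant quantity (using \eqref{expuc}, which gives $\Ex[p^{i+1}_u(c)] = p^i_u(c)$, together with the survival estimates \eqref{qbound} and \eqref{prcnotinuvw}--\eqref{prcnotinuv}) and then applying a concentration inequality --- Hoeffding (Theorem \ref{hoeffding}), its variance form (Theorem \ref{hoeffdingvar}), or the bounded-differences Corollary \ref{bdbad} --- to show the quantity does not deviate from its expectation by more than the relevant $\omega_j$ error term, except with probability $\Delta^{-\Omega(1)}$. Specifically: for (P1), $\Ex[w(p^{i+1}_u)] = w(p^i_u)$ and the $\gamma$-variables change each $p^{i+1}_u(c)$ by at most $\hat p$, so Hoeffding with the bound from (R3) gives the drift $1/\omega_1$ per step; for (P2), one shows $\Ex[e^{i+1}_u] \leq e^i_u(1-\theta/3) + (\text{small})$ using that each color $c$ survives the step with probability roughly $(1-\theta)^{\text{something}}$ and the triangle-free-driven bound \eqref{prcnotinuvw} controls correlations, then concentrate via (R6)--(R7); (P3) is analogous using \eqref{prcnotinuv} and the fact that new $2$-edges in $G^{i+1}_c$ are added only when a third vertex gets colored $c$ (expected contribution controlled by $\theta\hat p$-type terms), with concentration from (R10)--(R11); (P4) --- the entropy-increase property --- is the subtle one, tracking $h^i_u = -\sum_c p^i_u(c)\log p^i_u(c)$ and showing $\Ex[h^{i+1}_u] \geq h^i_u - O(\epsilon)(1-\theta/4)^i$ via a convexity/Jensen argument on the map $x \mapsto -x\log x$ applied to the martingale $p^{i+1}_u(c)$, concentrated using (R12)--(R13); (P5) follows since a neighbor $v$ of $u$ in $H^i$ leaves $H^{i+1}$ once it is colored, which happens with probability $\gtrsim \theta w(p^i_v)/C \cdot (\text{survival})$, giving the $(1-\theta/3)$ contraction after concentration via (R14)--(R15); (P6) bounds the number of new $c$-edges at $u$ by the number of triangles $uvw$ with $w$ colored $c$, whose expectation is $O(\theta\hat p\Delta)$ per step by (R16).

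**The main obstacle** will be property (P4), the entropy lower bound, and its interaction with (P2). The increase in entropy at a vertex per iteration is what forces the list sizes to effectively grow relative to the number of surviving colors, and making this rigorous requires carefully splitting the colors into those handled by the ``survival'' rule \eqref{flip1} versus the ``coin-flip'' rule \eqref{flip2} (triggered by $p^i_u(c)/q^i_u(c) \geq \hat p$ or $c \in B^i(u)$), bounding the entropy contribution of the $B^i(u)$ colors using Lemma \ref{bbound} (which bounds $|B^i(u)|$), and quantifying the strict convexity gain in terms of how much $q^i_u(c)$ is bounded away from $1$ --- which in turn needs (P2) and (P5) to say the relevant sums $e^i_u(c)$ and $f^i_u(c)$ are not too large. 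This circularity is resolved by proving all six properties together in a single induction, but the bookkeeping of which error term ($\omega_0$ through $\omega_6$) absorbs which lower-order contribution, and verifying that the relations (R1)--(R21) are exactly what make each concentration-plus-expectation estimate close, is where essentially all the work lies. The hypotheses \eqref{prcnotinuvw}--\eqref{prcnotinuv} (the ``$\omega_0$ conditions'') are the only place correlations among the $\gamma$-variables enter, and a separate argument --- deferred to Section \ref{sec_trianglefree} --- is needed to verify them from triangle-freeness; here I would simply invoke them as hypotheses.
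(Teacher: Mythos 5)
Your outline matches the paper's architecture --- induction on $i$, a one-step expectation computation plus a concentration inequality for each of (P1)--(P6), with the paper's modular decomposition into Lemmas~\ref{auxlemma}, \ref{q2plemma}, and~\ref{bbound} folded into a single induction. But there is a genuine gap: the concentration inequalities give, for each vertex (or edge) \emph{separately}, a bad-event probability of order $\Delta^{-\Omega(1)}$; they do not by themselves yield a single realization of the iteration's random choices under which all of the one-step estimates hold simultaneously at every vertex. Since $|V(H)|$ is not bounded in terms of $\Delta$, a union bound fails. The paper's Lemma~\ref{auxlemma} invokes the Asymmetric Local Lemma precisely to close this: each bad event is determined by coin flips within a radius-$2$ neighborhood and is therefore mutually independent of all but roughly $7^4\Delta^5$ others, so the per-event bound $\Delta^{-5}$ (or better) suffices to guarantee a successful assignment exists. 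Your proposal never mentions the Local Lemma in the inductive step, and without it the passage from ``each one-step estimate holds with high probability'' to ``there is an assignment making (P1)--(P6) hold at $i+1$'' is not established.

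A secondary imprecision is your sketch of (P4). You describe a ``convexity/Jensen argument'' on $x\mapsto -x\log x$, but Jensen applied to this concave function yields only $\Ex[h^{i+1}_u]\leq h^i_u$ (entropy decreases in expectation), which is the \emph{wrong} direction for the lemma and gives no rate. What is actually needed --- and what the paper computes --- is the exact identity $\Ex[p'_u(c)\log p'_u(c)] = p_u(c)\log p_u(c) - p_u(c)\log\Pr[A]$, where $A$ is the survival (or coin-flip) event, followed by an FKG-based lower bound $\Pr[A]\geq q_u(c)$ expanded via $\log(1-x)\geq -x - x^2$; this converts $-\log\Pr[A]$ into a sum controlled by $\theta^2 e^i_u(c)+\theta f^i_u(c)$ and produces the quantitative drift $\Ex[h_u-h'_u]\leq 2\theta(f_u+e_u)$, after which Hoeffding supplies the concentration with error $1/\omega_4$. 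You should replace the Jensen heuristic with this explicit calculation.
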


\noindent
The proof of the Main Lemma relies on the next three lemmas.

\begin{lemma}\label{auxlemma}
For any $i=0,1,\dots\,T-1$, if 
\eqref{prcnotinuvw}, \eqref{prcnotinuvh}, \eqref{prcnotinuv}, and (R1)-(R21) hold and
$|B^i(u)| \leq \epsilon/\hat{p}$ for all $u \in U^i$, 
then there is an assignment of colors to the vertices in $U^i$ 
so that the following properties hold:
\begin{enumerate}[({Q}1)]
  \item \label{q1} $|w(p_u^{i+1})-w(p_u^i)| \leq 1/\omega_1$
  \item \label{q2} $e^{i+1}_{uvw} \leq e^{i}_{uvw} + 1/(\Delta \omega_2)$
  \item \label{q3} $f^{i+1}_u \leq f^i_u(1-\theta/2) + \theta e^i_u + 1/\omega_3$
  \item \label{q4} 
    $h^{i}_u - h_u^{i+1} \leq 2\theta(f^i_u + e^i_u) + 1/\omega_4$
  \item \label{q5} $d_H^{i+1}(u) \leq (1-\theta/2)d_H^i(u) + \omega_5$
  \item \label{q6} $d^{i+1}_{G_c}(u) \leq d^i_{G_c}(u) + 2\omega_6\theta\Delta \hat{p}$.
\end{enumerate}
\end{lemma}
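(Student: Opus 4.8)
The plan is to prove each of (Q1)--(Q6) by a separate concentration argument, using the random choices $\gamma^i_u(c)$ (and the auxiliary coins $\eta^i_u(c)$) of iteration $i+1$ as the source of randomness, and then union-bound over all vertices and colors so that with positive probability every bound holds simultaneously. For each quantity I would first compute its expectation after the round, show that the expectation satisfies the required deterministic inequality using \eqref{expuc} (which gives $\Ex[p^{i+1}_u(c)] = p^i_u(c)$) together with the definitions of $q^i_u(c)$ and the bound \eqref{qbound}, and then bound the deviation from the expectation by an appropriate concentration inequality from Section 2.

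For (Q1): $w(p^{i+1}_u) = \sum_c p^{i+1}_u(c)$ has expectation $w(p^i_u)$ by \eqref{expuc}; each term lies in $[0,\hat{p}]$ and there are at most $C$ of them, so Theorem~\ref{hoeffding} (or rather Theorem~\ref{azuma}, since the $p^{i+1}_u(c)$ depend on the $\gamma^i_v(c)$ which may be shared across colors through edges, but each $\gamma^i_v(c)$ affects $w(p^{i+1}_u)$ by at most $O(\hat p)$) gives a deviation of order $1/\omega_1$ by (R3). For (Q2): $e^{i+1}_{uvw} = \sum_c p^{i+1}_u(c)p^{i+1}_v(c)p^{i+1}_w(c)$; here I would take expectations conditionally using independence of the flips at distinct vertices (the three weights are determined by disjoint sets of coins once the round's colors are revealed, or more carefully by the $\gamma$'s on $N^i(u), N^i(v), N^i(w)$), multiply, and note $\Ex \le e^i_{uvw}$ up to the $(1+1/\omega_0)$ slack from \eqref{prcnotinuvw}; concentration via Corollary~\ref{bdbad} with the ``good'' event $\mathcal A$ being that no $B$-set is too large and each single coordinate changes the product by at most $O(\hat p^3)$, using (R6). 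The structure for (Q3), (Q4), (Q5), (Q6) is analogous: compute the expected drift (for (Q3) the new $2$-edges contribute the $\theta e^i_u$ term, for (Q4) the entropy drop is controlled by the standard $-x\log x$ estimate combined with the survival probabilities, for (Q5) the surviving degree shrinks by a factor $(1-\theta/2)$ in expectation since each neighbor is colored with probability $\approx \theta w(p^i_v) \approx \theta$, for (Q6) the added edges to $G^i_c$ number at most $\sum_{uvw} \Pr[w \text{ colored } c] \le$ something controlled by $\omega_6 \theta \Delta \hat p$), and then apply the matching concentration bound (R10)/(R11), (R12)/(R13), (R14)/(R15), (R16) respectively.

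The main obstacle is the concentration step rather than the expectation computation: the random variables in question are not sums of independent variables — $p^{i+1}_u(c)$ depends on the entire constellation of $\gamma^i_v(c)$ for $v$ in edges through $u$, and changing one $\gamma^i_v(c)$ can in principle flip $p^{i+1}_u(c)$ between $0$ and $\hat p/q^i_u(c)$, and worse, the same vertex $v$ participates in the weights for many colors and many other vertices. The careful point is to bound the per-coordinate Lipschitz constants $d_i$ in Corollary~\ref{bdbad} correctly while restricting to the good event $\mathcal A$ (roughly: all $q^i_u(c)$ bounded below, all $|B^i(u)| \le \epsilon/\hat p$, degrees as in the Main Lemma's (P5)--(P6)), and to check that on $\mathcal A$ a single $\gamma^i_v(c)$ affects each of $w(p^{i+1}_u)$, $e^{i+1}_{uvw}$, $f^{i+1}_u$, $h^{i+1}_u$, $d^{i+1}_H(u)$, $d^{i+1}_{G_c}(u)$ by only the small amounts recorded in (R3), (R6), (R10)--(R11), (R12), (R14), (R16). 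I expect the degree-based quantities (Q5) and especially (Q3) (which mixes the current degree, the edge-weights $e^i_u$, and newly created $2$-edges) to require the most bookkeeping, since the Lipschitz constant there involves $m\Delta^{1+1/2m}\hat p^3$-type terms coming from vertices of high codegree, and that is exactly what the parameter $m=21$ and the codegree bound $\delta$ are tuned to handle. Once all six concentration failures have probability $o(1/(nC))$ by (R3), (R6), (R10), (R12), (R14), (R16), a final union bound over $u$ and $c$ (and, for (Q2), over edges $uvw$) produces the desired simultaneous assignment.
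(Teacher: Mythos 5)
Your overall plan---compute the expected drift of each tracked quantity and then prove concentration using the tools of Section 2---matches the paper's strategy, and your sense of which parameters (R3), (R6), (R10)--(R16) control which concentration step is accurate. But there is a genuine and fatal gap at the very last step: you cannot conclude by ``a final union bound over $u$ and $c$.'' The hypergraph has an arbitrary number of vertices $n$, with no upper bound in terms of $\Delta$, while the concentration bounds you can realistically obtain (as in the paper) make each (Q$i$)-failure event have probability only of order $\Delta^{-6}$. For a union bound over all vertices, colors, and edges you would need failure probability $o(1/(nC))$, which is simply not available when $n \gg \Delta^{O(1)}$. The paper's proof therefore does \emph{not} union bound; it applies the Lov\'asz Local Lemma. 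The crucial observation, which your proposal omits entirely, is that each failure event ``(Q$i$) fails at $u$'' (or at $uvw$) is determined by the coin flips $\gamma,\eta$ in a bounded neighborhood (at most $N(N(u))$, respectively $N(N(u))\cup N(N(v)) \cup N(N(w))$), and is therefore mutually independent of all but at most $7^4\Delta^5$ of the other events. Once each bad event has probability at most $\tfrac14\,(7^4\Delta^5)^{-1}$, the symmetric Local Lemma yields a single outcome avoiding all of them simultaneously, regardless of $n$. Without this locality-plus-Local-Lemma argument, your proof does not close.

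Two smaller inaccuracies worth noting. First, for (Q1) and (Q2) you reach for Azuma / Corollary~\ref{bdbad} because you worry the $\gamma^i_v(c)$ are ``shared across colors,'' but in fact the $\gamma^i_v(c)$ are independent across both $v$ and $c$ by construction, so $w(p'_u)=\sum_c p'_u(c)$ and $e'_{uvw}=\sum_c p'_u(c)p'_v(c)p'_w(c)$ are genuine sums of $C$ independent bounded terms and plain Hoeffding (Theorem~\ref{hoeffding}) suffices; Corollary~\ref{bdbad} is needed only for (Q3) and (Q5). Second, you misidentify the ``good'' event $\mathcal{A}$ of Corollary~\ref{bdbad}: it is not ``$q$'s bounded below, $B$-sets small, degrees as in (P5)--(P6)'' but the codegree-bucketed sparsity event of Claim~\ref{conccond}, namely that for every color $c$ and every $l$ at most $\Delta^{1-l/2m}\hat p$ vertices of $N^0(u,l)$ have $\gamma_v(c)=1$; that is what makes the Lipschitz constants $d_c$ come out to the $m\hat p^3\Delta^{1+1/2m}$-type quantities appearing in (R10)--(R11) and (R14). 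You do correctly sense that $m$ and $\delta$ are tuned to control the high-codegree neighbors, but the mechanism is this bucketing, not the conditions you list.
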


\begin{lemma}\label{q2plemma}
If (Q1)-(Q6) hold for $i$ and (P1)-(P6) hold for $i$, 
then (P1)-(P6) hold for $i+1$.
\end{lemma}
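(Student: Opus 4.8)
The plan is to run a single induction on $i$, assuming (P1)--(P6) hold at step $i$ and deriving them at step $i+1$ from (Q1)--(Q6), which the hypotheses supply. Each of the six properties at $i+1$ is obtained by "unwinding" the one-step recursion in the corresponding (Q) bound and summing the resulting geometric-type series against the inductive bound from (P). I would treat the six in roughly the order (P1), (P5), (P6), (P2), (P3), (P4), since the latter bounds depend on earlier ones.

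First, (P1): from (Q1), $|w(p_u^{i+1}) - w(p_u^i)| \le 1/\omega_1$, and combining with the inductive bound $|1 - w(p_u^i)| \le i/\omega_1$ via the triangle inequality gives $|1 - w(p_u^{i+1})| \le (i+1)/\omega_1$. Next, (P5): iterate (Q5), $d_H^{i+1}(u) \le (1-\theta/2) d_H^i(u) + \omega_5 \le (1-\theta/2)(1-\theta/3)^i\Delta + \omega_5$; here I would use $(R14)$ (which bounds $\omega_5$ by roughly $(\theta/6)(1-\theta/3)^T\Delta$) to absorb the additive $\omega_5$ into the gap between $(1-\theta/2)^{i+1}$ and $(1-\theta/3)^{i+1}$, concluding $d_H^{i+1}(u) \le (1-\theta/3)^{i+1}\Delta$. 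For (P6): from (Q6), $d_{G_c}^{i+1}(u) \le d_{G_c}^i(u) + 2\omega_6\theta\Delta\hat p \le 3\omega_6 i\theta\Delta\hat p + 2\omega_6\theta\Delta\hat p \le 3\omega_6(i+1)\theta\Delta\hat p$, which is immediate.

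The substantive cases are (P2), (P3), (P4). For (P2), I would substitute the inductive bounds into (Q2) after summing over the (at most $\Delta$, by (P5)) edges $uvw \in H^i$: $e_u^{i+1} \le \sum_{uvw} (e_{uvw}^i + 1/(\Delta\omega_2))$, where one has to be slightly careful that the sum is over $H^i$-edges at $u$ but $e_u^{i+1}$ sums over $H^{i+1}$-edges, which form a subset — so $e_u^{i+1} \le e_u^i + d_H^i(u)/(\Delta\omega_2) \le (1-\theta/3)^i\omega + i/\omega_2 + 1/\omega_2$, as needed. For (P3), iterate (Q3): $f_u^{i+1} \le (1-\theta/2)f_u^i + \theta e_u^i + 1/\omega_3$, plug in $f_u^i \le 8(1-\theta/4)^i\omega$ and $e_u^i \le (1-\theta/3)^i\omega + i/\omega_2$; the term $(1-\theta/2)\cdot 8(1-\theta/4)^i\omega + \theta(1-\theta/4)^i\omega \le 8(1-\theta/4)^{i+1}\omega$ holds because $8(1-\theta/2) + \theta \le 8(1-\theta/4)$, and the leftover error $\theta i/\omega_2 + 1/\omega_3$ must be shown to fit inside the slack — this is where $(R17)$, $\theta\omega(1-\theta/4)^T \ge \theta T/\omega_2 + 1/\omega_3$, does the work. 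Finally, (P4) follows by iterating (Q4): $h_u^{i+1} \ge h_u^i - 2\theta(f_u^i + e_u^i) - 1/\omega_4 \ge h_u^0 - 21\epsilon\sum_{j=0}^{i-1}(1-\theta/4)^j - 2\theta(f_u^i+e_u^i) - 1/\omega_4$, and then bounding $2\theta(f_u^i + e_u^i) + 1/\omega_4 \le 21\epsilon(1-\theta/4)^i$ using (P2), (P3), and $(R12)$--$(R13)$ to control $1/\omega_4$ closes the induction.

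The main obstacle is the bookkeeping in (P3) (and to a lesser extent (P4)): one must verify that the accumulated additive errors $\theta i/\omega_2 + 1/\omega_3$ summed over all $i \le T$ stay below the exponentially-shrinking "budget" $8(1-\theta/4)^i\omega - [8(1-\theta/2) + \theta](1-\theta/4)^{i-1}\omega$, and this is exactly the content of relations $(R7)$, $(R8)$, $(R17)$ — getting the constants to line up (the choice of $8$ in (P3), the $21\epsilon$ in (P4)) is the delicate part, though each individual inequality is routine once the relations are invoked.
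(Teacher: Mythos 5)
Your overall architecture (induct on $i$, handle (P1), (P5), (P6) first, then (P2), (P3), (P4)) matches the paper, and your arguments for (P1), (P3), (P4), (P5), (P6) are essentially the paper's (modulo a harmless label slip: the relation $\omega_5 < (\theta/6)(1-\theta/3)^T\Delta$ is (R15), not (R14)).

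However, your argument for (P2) has a genuine gap. You write
\[
e_u^{i+1} \;\le\; e_u^i + \frac{d_H^i(u)}{\Delta\omega_2} \;\le\; (1-\theta/3)^i\omega + \frac{i}{\omega_2} + \frac{1}{\omega_2},
\]
and call this ``as needed,'' but (P2) at step $i+1$ requires the decay factor $(1-\theta/3)^{i+1}$, not $(1-\theta/3)^{i}$. Your bound is strictly weaker and cannot be fed back into the induction at the next step: iterating it gives no decay at all in $e_u$ beyond the initial $\omega$, and that decay is indispensable later (it is what makes $\theta e_u^i$ absorbable into the shrinking budget in (P3), and what keeps the total entropy loss in (P4) at $O(\epsilon/\theta)$ rather than $\Theta(\epsilon T)$). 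The point you are missing is that (Q2) never makes the per-edge quantity $e^i_{uvw}$ decrease --- it only controls its growth --- so the decay of $e_u^i$ must come entirely from the shrinking number of surviving edges. The paper's proof does exactly this: it iterates (Q2) from iteration $0$ to get the uniform per-edge bound $e^{i+1}_{uvw} \le e^0_{uvw} + (i+1)/(\Delta\omega_2) \le \omega/\Delta + (i+1)/(\Delta\omega_2)$, and then multiplies by the edge count $d_H^{i+1}(u) \le (1-\theta/3)^{i+1}\Delta$ supplied by (P5) \emph{at step $i+1$} (which is why (P5) must be established before (P2)). You correctly observed that $e_u^{i+1}$ sums over $H^{i+1}$-edges, a subset of the $H^i$-edges, but you never quantified that subset; quantifying it via (P5) is precisely the missing step.

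A smaller remark on (P6): your chain $d_{G_c}^{i+1}(u) \le d_{G_c}^i(u) + 2\omega_6\theta\Delta\hat p \le 3\omega_6 i\theta\Delta\hat p + 2\omega_6\theta\Delta\hat p$ uses (P6) at step $i$, which is vacuous at $i=0$ (where $d^0_{G_c}(u)$ can be as large as $\Delta_2$). The paper instead unwinds (Q6) from the initial degree $\Delta_2$ and invokes (R19) ($\Delta_2 \le \omega_6\theta\Delta\hat p$) to absorb it; you should do the same.
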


\begin{lemma}\label{bbound}
If (P1)-(P6) hold for $i+1$ and (R1) and (R5) hold, then $|B^{i+1}(u)| \leq \epsilon/\hat{p}$.
\end{lemma}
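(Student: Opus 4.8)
The plan is to derive the bound by pairing the entropy lower bound (P4) with a matching \emph{upper} bound on $h^{i+1}_u$ in which the contribution of the colours in $B^{i+1}(u)$ is isolated. First I would record two elementary facts about the weights, both immediate from the initialisation $p^0_u(c)=1/C$ and the update rules \eqref{flip1}--\eqref{flip2}: a positive weight never leaves the interval $[1/C,\hat p]$ (in \eqref{flip1} the surviving value is $p^i_u(c)/q^i_u(c)\ge p^i_u(c)$, and in \eqref{flip2} the positive value is $\hat p$, which is $\ge 1/C$ since (R1) forces $\hat pC>1$), and a weight equal to $0$ stays $0$. In particular $h^0_u=\log C$, and for every colour $c$ with $p^{i+1}_u(c)>0$ we have $1/C\le p^{i+1}_u(c)\le\hat p$, hence $\log\bigl(1/p^{i+1}_u(c)\bigr)\le\log C$.

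Next I would split the entropy according to membership in $B^{i+1}(u)$,
\[
h^{i+1}_u=\sum_{c\in B^{i+1}(u)}\hat p\log\tfrac1{\hat p}\;+\;\sum_{c\notin B^{i+1}(u)}p^{i+1}_u(c)\log\tfrac1{p^{i+1}_u(c)},
\]
bound every term of the second sum by $p^{i+1}_u(c)\log C$ using the floor $p^{i+1}_u(c)\ge1/C$, and write $k=|B^{i+1}(u)|$ and $W=w(p^{i+1}_u)$, so that $\sum_{c\notin B^{i+1}(u)}p^{i+1}_u(c)=W-k\hat p$. This gives
\[
h^{i+1}_u\le k\hat p\log\tfrac1{\hat p}+(W-k\hat p)\log C=W\log C-k\hat p\log(\hat pC),
\]
that is, $k\hat p\log(\hat pC)\le W\log C-h^{i+1}_u$.

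To finish I would substitute the hypotheses: (P1) for $i+1$ gives $W\le 1+T/\omega_1$; (P4) for $i+1$, together with $\sum_{j=0}^{i}(1-\theta/4)^j<4/\theta$ and $h^0_u=\log C$, gives $h^{i+1}_u>\log C-84\epsilon/\theta$; and (R5) gives $(T/\omega_1)\log C<\epsilon/\theta$. Combining, $k\hat p\log(\hat pC)<\epsilon/\theta+84\epsilon/\theta=85\epsilon/\theta$, and then (R1) in the form $\log(\hat pC)\ge 85/\theta$ forces $k\hat p<\epsilon$, i.e. $|B^{i+1}(u)|\le\epsilon/\hat p$.

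The only delicate point — and the reason the obvious approach is not enough — is the upper bound on the non-$B^{i+1}(u)$ part of the entropy. Estimating it by concavity (spreading the leftover weight uniformly over the remaining $\le C$ colours) loses an additive term of order $\epsilon$, and the inequality then fails to close against the constant $84$ appearing in (P4). Using instead the hard floor $p^{i+1}_u(c)\ge 1/C$ replaces the concavity estimate with the clean per-term bound $p^{i+1}_u(c)\log C$, which is exactly what lines up the constants in (R1), (R5) and (P4). Everything else is routine arithmetic with the parameter table.
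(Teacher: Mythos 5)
Your proof is correct and is essentially the paper's argument rearranged: the paper establishes the identical inequality $|B^{i+1}(u)|\hat p\log(\hat p C)\leq w(p^{i+1}_u)\log C-h^{i+1}_u$ by dropping the terms $p^{i+1}_u(c)\log(p^{i+1}_u(c)C)$ for $c\notin B^{i+1}(u)$ (which are nonnegative precisely because positive weights never fall below $1/C$, the fact you isolate explicitly), and then closes via (P4) with $\sum_{j\le i}(1-\theta/4)^j\le 4/\theta$, together with (P1), (R5), and (R1), exactly as you do. The only difference is presentational: you make the floor $p^{i+1}_u(c)\in\{0\}\cup[1/C,\hat p]$ and the identity $h^0_u=\log C$ explicit, whereas the paper leaves the nonnegativity of the dropped terms implicit and computes $h^0_u$ via $w(p^0_u)-w(p^{i+1}_u)$.
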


\subsection{Proof of Main Lemma}
The proof relies on Lemmas \ref{auxlemma}, \ref{q2plemma} and \ref{bbound}.
Assuming these lemmas, we proceed inductively as follows:
properties (P1)-(P6) hold for $i = 0$ ((P3) holds by (R20)). Assume (P1)-(P6) hold for $i$.
By Lemma \ref{bbound}, $|B^{i}(u)| \leq \epsilon/\hat{p}$, so by Lemma \ref{auxlemma},
(Q1)-(Q6) hold for $i$. Thus Lemma \ref{q2plemma} implies (P1)-(P6) hold for $i+1$.

\subsection{Proof of Lemma \ref{q2plemma}}

\noindent \textbf{Proof of (P1)}. By (P1) (for $i$) and (Q1),
\begin{align*}
|1-w(p^{i+1}_u)| &= |1-w(p^i_u)+w(p^i_u)-w(p^{i+1}_u)| \\
&\leq |1-w(p^i_u)| + |w(p^{i+1}_u)-w(p^i_u)| \\
&\leq (i+1)/\omega_1.
\end{align*}

\noindent \textbf{Proof of (P5)}. Using (P5) (for $i$),
\begin{align*}
d_H^{i+1}(u) 
\comp{\leq}{(Q5)} (1-\theta/2)d_H^i(u) + \omega_5
&\comp{\leq}{(P5)} (1-\theta/2)(1-\theta/3)^i\Delta + \omega_5 \\
&= (1-\theta/3)^{i+1}\Delta - \frac{\theta}{6}(1-\theta/3)^i\Delta + \omega_5 \\
&\leq (1-\theta/3)^{i+1}\Delta - \frac{\theta}{6}(1-\theta/3)^T\Delta + \omega_5 \\
&\comp{\leq}{(R15)} (1-\theta/3)^{i+1}\Delta.
\end{align*}

\noindent \textbf{Proof of (P2)}. By (Q2), 
\[
e^{i+1}_{uvw} \leq e^0_{uvw} + (i+1)/\Delta\omega_2 
 \leq C(1/C^3) + (i+1)/\Delta\omega_2 = \omega/\Delta + (i+1)/\Delta\omega_2.
\]
So by (P5) (for $i+1$),
\[
e^{i+1}_u = \sum_{uvw}e^{i+1}_{uvw} \leq (1-\theta/3)^{i+1} \Delta(\omega/\Delta + (i+1)/\Delta \omega_2)
\leq (1-\theta/3)^{i+1}\omega + (i+1)/\omega_2.
\]

\noindent \textbf{Proof of (P3)}.
By (P3) and (P2) (for $i$),
\begin{align*}
f^{i+1}_u 
&\comp{\leq}{(Q3)} f^i_u (1-\theta/2) + \theta e^i_u + 1/\omega_3 \\
&\comp{\leq}{(P3)} 8(1-\theta/4)^i \omega (1-\theta/2) + \theta e^i_u + 1/\omega_3 \\
&\comp{\leq}{(P2)} 8(1-\theta/4)^i \omega (1-\theta/2) + \theta \omega (1-\theta/3)^i + \theta T/\omega_2 + 1/\omega_3 \\
&= 8(1-\theta/4)^i \omega (1-\theta/4 - \theta/4) + \theta \omega (1-\theta/3)^i + \theta T/\omega_2 + 1/\omega_3 \\
&= 8(1-\theta/4)^{i+1} \omega - 2\theta\omega(1-\theta/4)^i + \theta \omega (1-\theta/3)^i + \theta T/ \omega_2 + 1/\omega_3 \\
&< 8(1-\theta/4)^{i+1} \omega - \theta\omega(1-\theta/4)^i + \theta T / \omega_2 + 1/\omega_3 \\
&\comp{\leq}{(R17)} 8(1-\theta/4)^{i+1}\omega.
\end{align*}

\noindent \textbf{Proof of (P4)}. 
We have
\begin{equation}\label{tomega}
T/\omega_2 \comp{\leq}{(R17)} \omega(1-\theta/4)^T \leq \omega(1-\theta/4)^i.
\end{equation}
Therefore, using $\epsilon=\omega\theta$ and (P4) (for $i$),
\begin{align*}
h^{i+1}_u 
&\comp{\geq}{(Q4)} h^i_u - 2\theta(f^i_u+e^i_u) - 1/\omega_4 \\
&\comp{\geq}{(P3)} h^i_u - 2\theta(8(1-\theta/4)^i\omega 
 + e^i_u) - 1/\omega_4 \\
&\comp{\geq}{(P2)} h^i_u - 2\theta(8(1-\theta/4)^i\omega 
 + (1-\theta/3)^i\omega+T/\omega_2) - 1/\omega_4 \\
&\geq h^i_u - 2\theta(9(1-\theta/4)^i\omega +T/\omega_2) - 1/\omega_4 \\
&\comp{\geq}{\eqref{tomega}} h^i_u - 2\theta(10(1-\theta/4)^i\omega) - 1/\omega_4 \\
&= h^i_u - 20\epsilon(1-\theta/4)^i - 1/\omega_4 \\
&\comp{\geq}{(R13)} h^i_u - 21\epsilon(1-\theta/4)^i \\
&\comp{\geq}{(P4)} h^0_u - 21\epsilon \sum_{j=0}^{i-1}(1-\theta/4)^j - 
  21\epsilon(1-\theta/4)^i \\
&= h^0_u - 21\epsilon\sum_{j=0}^{i}(1-\theta/4)^j.
\end{align*}

\noindent \textbf{Proof of (P6)}. By (Q6) and (R19),
\begin{align*}
d^{i+1}_{G_c}(u) \comp{\leq}{(Q6)} 
\Delta_2 + 2\omega_6(i+1)\theta\Delta\hat{p} \comp{\leq}{(R19)} 3\omega_6(i+1)\theta\Delta\hat{p}.
\end{align*}

\subsection{Proof of Lemma \ref{bbound}}
\noindent
First, 
\begin{align}\label{bboundeq1}
|B^{i+1}(u)|\hat{p}\log(\hat{p}C) 
= \sum_{c \in B^{i+1}(u)}\hat{p}\log(\hat{p}C)
&= \sum_{c \in B^{i+1}(u)}p^{i+1}_u(c)\log(p^{i+1}_u(c)C) \notag \\
&\leq \sum_{c \in C(u) }p^{i+1}_u(c)\log(p^{i+1}_u(c)C) \notag \\
&= \sum_{c \in C(u)} p^{i+1}_u(c) \log p^{i+1}_u(c) + \sum_{c \in C(u)} p^{i+1}_u(c)\log C \notag \\
&= -h^{i+1}_u + \log C \sum_{c \in C(u)} p^{i+1}_u(c).
\end{align}
Using $p^0_u(c) = 1/C$ for all $c \in C(u)$,
\begin{align*}
h^0_u
&= -\sum_{c \in C(u)} p_u^0(c)\log p_u^0(c) \\
&= \log C\sum_{c \in C(u)} p_u^0(c) \\
&= \log C\sum_{c \in C(u)} (p_u^0(c)-p_u^{i+1}(c)) + \log C \sum_{c\in C(u)} p_u^{i+1}(c) \\
&= \log C (1-w(p_u^{i+1}))  +\log C \sum_{c\in C(u)} p_u^{i+1}(c) \\
&\comp{\geq}{(P1)} -(T\log C)/\omega_1 + \log C \sum_{c\in C(u)} p_u^{i+1}(c) \\
&\comp{>}{(R5)} -\epsilon/\theta + \log C \sum_{c\in C(u)} p_u^{i+1}(c).
\end{align*}
Using $\sum_{j=0}^i (1-\theta/4)^j \leq 4/\theta$,
the above inequality, and inequality \eqref{bboundeq1},
\begin{align*}
h^{i+1}_u \comp{\geq}{(P4)} h^0_u - 21\epsilon \sum_{j=0}^i (1-\theta/4)^j
 \geq h^0_u - 84\epsilon/\theta
 &\geq \log C \sum_{c\in C(u)} p^{i+1}_u(c) - 85\epsilon/\theta \\
 &\comp{\geq}{\eqref{bboundeq1}} h^{i+1}_u + |B^{i+1}(u)|\hat{p}\log(\hat{p}C) - 85\epsilon/\theta.
\end{align*}
So
\begin{align*}
|B^{i+1}(u)| \leq \frac{85 \epsilon}{\theta \hat{p} \log(\hat{p}C)}
\comp{\leq}{(R1)} \epsilon/\hat{p}.
\end{align*}

\subsection{Proof of Lemma \ref{auxlemma}}
We are going to apply the Local Lemma.
Our probability space is determined by coin flips at each vertex
which determine the random variables $\gamma_u(c)$ and $\eta_u(c)$.
The random variable $p_u(c)$ is determined by the coin flips in $N(u)$.
The events ``(Q1) fails to hold for $u$'' and ``(Q4) fails to hold for $u$''
are therefore determined by these coin flips.
The events ``(Q3) fails to hold for $u$'' and ``(Q5) fails to hold for $u$''
are determined by the coin flips in $N(N(u))$.
The event ``(Q2) fails to hold for edge $uvw$'' is determined by the coin flips in
$N(N(u)) + N(N(v)) + N(N(w))$.
The event ``(Q6) fails to hold for $u$ and $c$'' is determined by the coin flips
in $N(N(u))$.
Each event is therefore mutually independent of at most $5(\Delta+\Delta_2)^4$ 
(Q1), (Q3), (Q4), (Q5), or (Q6) events and at most $\Delta(3\Delta+3\Delta_2)^4$ 
(Q3) events. By (R20), $\Delta_2 < \Delta$, so 
each event is mutually independent of at most $7^4\Delta^5$ other events.

It therefore suffices to show that the probability that (Q$i$) fails
 is less than $4(7^4)\Delta^{-5}$. 
We prove this for (Q1), (Q2), (Q4), and (Q6) first,
and then move on to (Q3) and (Q5). Throughout the proof, we drop the notation $i+1$ and $i$,
and use, for instance, $p'_u(c)$ and $p_u(c)$ to denote values in iterations $i+1$ and $i$,
respectively.

\noindent \textbf{Proof of (Q1)}.
By \eqref{expuc}, $\Ex[p'_u(c)] = p_u(c)$ for each color $c$.
By linearity of expectation,
\[
\Ex[w(p'_u)] = w(p_u).
\]
Since $w(p'_u)$ is the sum of $C$ independent
non-negative random variables, each bounded by $\hat{p}$, Theorem \ref{hoeffding}
and (R3) imply
\begin{align*}
\Pr[|w(p'_u)-w(p_u)| \geq 1/\omega_1] 
\leq 2e^{- 2/(C \hat{p}^2 \omega_1^2) }
&< 2e^{- 6\log\Delta }.
\end{align*}

\noindent \textbf{Proof of (Q2)}.
Suppose $uvw \in H$. We first prove
\begin{equation}\label{exuvw}
\Ex[p'_u(c)p'_v(c)p'_w(c)] \leq p_u(c)p_v(c)p_w(c)(1+1/\omega_0).
\end{equation}
Assume that $p'_u(c)$, $p'_v(c)$, and $p'_w(c)$ are determined by \eqref{flip1}.
If $c \in L(u) \cup L(v) \cup L(w)$, then $p'_u(c)p'_v(c)p'_w(c) = 0$,
so by \eqref{prcnotinuvw},
\begin{align*}
\Ex[p'_u(c)p'_v(c)p'_w(c)]
&\leq \frac{p_u(c)}{q_u(c)}\frac{p_v(c)}{q_v(c)}\frac{p_w(c)}{q_w(c)}
\Pr[c \notin L(u) \cup L(v) \cup L(w)] \\
&\leq p_u(c)p_v(c)p_w(c)(1+1/\omega_0).
\end{align*}
Suppose $p'_u(c)$ and $p'_v(c)$ are determined by \eqref{flip1}, and
$p'_w(c)$ is determined by \eqref{flip2}. Then
$p'_w(c)$ is independent of $p'_u(c)$ and $p'_v(c)$, so by \eqref{prcnotinuvh},
\begin{align*}
\Ex[p'_u(c)p'_v(c)p'_w(c)]
&= \Ex[p'_u(c)p'_v(c)]\Ex[p'_w(c)] \\
&\leq \frac{p_u(c)}{q_u(c)}\frac{p_v(c)}{q_v(c)}\Pr[c \notin L(u) \cup L(v)] p_w(c) \\
&\leq p_u(c)p_v(c)p_w(c)(1+1/\omega_0).
\end{align*}
If at least two of $p'_u(c)$, $p'_v(c)$, and $p'_w(c)$ are determined by \eqref{flip2}, then
all three are independent of each other, and 
\[
\Ex[p'_u(c)p'_v(c)p'_w(c)] = p_u(c)p_v(c)p_w(c),
\]
finishing the proof of \eqref{exuvw}.

By definition, $e^0_{uvw} \leq C/C^3 = \omega/\Delta$. So by (Q2) (for $i$) and (R8), 
\[
e_{uvw} / \omega_0
\comp{\leq}{(Q2)} (e^0_{uvw} + \frac{i}{\Delta \omega_2}) \frac{1}{\omega_0}
\leq (\frac{\omega}{\Delta} + \frac{T}{\Delta \omega_2}) \frac{1}{\omega_0}
= \frac{\omega \omega_2 + T}{\omega_0} \frac{1}{\Delta \omega_2}
\comp{<}{(R8)} 1/(2\Delta \omega_2).
\]
So by \eqref{exuvw},
\begin{align*}
\Ex[e'_{uvw}] = \sum_c \Ex[p'_u(c)p'_v(c)p'_w(c)] 
&\leq \sum_c p_u(c)p_v(c)p_w(c)(1+1/\omega_0) \\
&= e_{uvw}(1+1/\omega_0) \\
&< e_{uvw}+1/(2\Delta\omega_2).
\end{align*}
Now $e'_{uvw}$ is the sum of $C$ independent random variables,
each bounded by $\hat{p}^3$. 
Thus Theorem \ref{hoeffding} and (R6) yield
\begin{align*}
\Pr[e'_{uvw} \geq e_{uww} + 1/(\Delta \omega_2)]
&\leq \Pr[e'_{uvw} \geq e_{uvw} + 1/(2\Delta\omega_2) + 1/(2\Delta \omega_2 )] \\
&\leq \Pr[e'_{uvw} \geq \Ex[e'_{uvw}] + 1/(2\Delta \omega_2 )] \\
&< e^{-2/(4 \Delta^2 \omega_2^2 C\hat{p}^6)} \\
&< e^{-6\log\Delta}.
\end{align*}

\noindent \textbf{Proof of (Q4)}.
By \eqref{flip1} and \eqref{flip2}, $p'_u(c) = p_u(c)\I[A]/\Pr[A]$ for some event $A$.
Thus, using $x\log x = 0$ for $x \in \{0,1\}$,
\begin{align*}
\Ex[p'_u(c)\log p'_u(c)] 
&= \Ex[p_u(c)\I[A] / \Pr[A] \log (p_u(c)\I[A]/\Pr[A])] \\
&= \Ex[p_u(c)\I[A]/\Pr[A] \log{p_u(c)} + p_u(c)\I[A]/\Pr[A]\log{(\I[A]/\Pr[A])}] \\
&= \frac{p_u(c)\log{p_u(c)}}{\Pr[A]}\Ex[\I[A]] + \frac{p_u(c)}{\Pr[A]}\Ex[\I[A]\log{(\I[A]/\Pr[A])}] \\
&= p_u(c)\log{p_u(c)} + \frac{p_u(c)}{\Pr[A]}\Ex[\I[A]\log{\I[A]}] - \frac{p_u(c)}{\Pr[A]}\Ex[\I[A]\log{\Pr[A]}] \\
&= p_u(c)\log{p_u(c)} + \frac{p_u(c)}{\Pr[A]}\Ex[0] - p_u(c)\log{\Pr[A]} \\
&= p_u(c)\log{p_u(c)} - p_u(c)\log{\Pr[A]}.
\end{align*}
Recall that
\[
q_u(c) =
\Pr[ \bigcap_{uvw \in H} (\gamma_v(c)=0 \cup \gamma_w(c)=0)
  \bigcap_{uv \in G_c} \gamma_v(c)=0 ].
\]
Also, $1-rx \geq (1-x)^r$ for  $r,x \in (0,1)$.
Finally, the event $\gamma_v(c)=0$ is monotone decreasing, so by the FKG inequality,
\begin{align*}
q_u(c) 
&\comp{\geq}{FKG}
\prod_{uvw \in H} \Pr[\gamma_v(c)=0 \cup \gamma_w(c)=0]
\prod_{uv \in G_c} \Pr[\gamma_v(c)=0 ] \\
&=
\prod_{uvw \in H} (1-\theta^2 p_v(c)p_w(c)) \prod_{uv \in G_c} (1-\theta p_v(c)) \\
&\geq
\prod_{uvw \in H} (1-\theta)^{\theta p_v(c)p_w(c)} \prod_{uv \in G_c} (1-\theta)^{p_v(c)}.
\end{align*}
By the algorithm, $\Pr[A] \geq q_u(c)$. 
Also, $\log(1-x) \geq -x - x^2$ for $x \in [0,1/3]$.
Combining these inequalities with the previous inequality, we obtain
\begin{align*}
\log{\Pr[A]} \geq \log{q_u(c)} 
&\geq \log{(\prod_{uvw \in H} (1-\theta)^{\theta p_v(c)p_w(c)} \prod_{uv \in G_c} (1-\theta)^{p_v(c)})} \\
&= \sum_{uvw \in H} \theta p_v(c)p_w(c)\log (1-\theta) + \sum_{uv \in G_c} p_v(c)\log(1-\theta) \\&\geq \sum_{uvw \in H} \theta p_v(c)p_w(c)(-\theta-\theta^2) + \sum_{uv \in G_c} p_v(c)(-\theta-\theta^2) \\
&= (-\theta^2-\theta^3)\sum_{uvw \in H} p_v(c)p_w(c) + (-\theta-\theta^2)\sum_{uv \in G_c}p_v(c) \\
&= -(\theta^2+\theta^3)e_u(c) - (\theta+\theta^2)f_u(c).
\end{align*}
Therefore, using the definition of $h_u$ and $\theta < 1/2$,
\begin{align*}
\Ex[h_u - h'_u] 
&= h_u + \sum_c \Ex[p'_u(c)\log p'_u(c)] \\
&= h_u + \sum_c p_u(c)\log p_u(c) - \sum_c p_u(c)\log\Pr[A]) \\
&= - \sum_c p_u(c)\log\Pr[A] \\
&\leq \sum_c p_u(c)((\theta+\theta^2)f_u(c)+(\theta^2+\theta^3)e_u(c)) \\
&= (\theta+\theta^2)f_u + (\theta^2 + \theta^3)e_u \\
&< 2\theta(f_u + e_u).
\end{align*}
The terms in $\sum_c -p'_u(c)\log p'_u(c)$ are independent and,
since $-x\log x$ is increasing for $0 < x \leq \hat{p}$,
bounded by $-\hat{p}\log{\hat{p}}$.
Thus, by Theorem \ref{hoeffding} and (R12),
\begin{align*}
\Pr[h_u-h'_u \geq 2\theta(f_u + e_u) + 1/\omega_4]
< e^{-2/(\omega_4^2 C (-\hat{p}\log\hat{p})^2)}
< e^{-6\log{\Delta}}.
\end{align*}

\noindent \textbf{Proof of (Q6)}.
Fix $c \in C(u)$. For each $v \in N_H(u)$, set
\[
X_v = d_H(u,v)\gamma_v(c),
\]
and set
\[
X = \sum_{v \in N_H(u)}X_v.
\]
Then
\[
\Ex[X] = \sum_{v \in N_H(u)}d_H(u,v)p_v(c)\theta \leq  \hat{p}\theta \sum_{v \in N_H(u)}d_H(u,v) \leq 2\Delta\hat{p}\theta.
\]
Since the $X_v$ are independent from each other (because the $\gamma_v(c)$ are independent), 
and $x(1-x)$ is increasing for $x < 1/2$,
\begin{align*}
\Var[X] = \sum_{v \in N_H(u)}\Var[X_v] 
&= \sum_{v \in N_H(u)}(\Ex[X_v^2]-\Ex[X_v]^2) \\
&= \sum_{v \in N_H(u)}(d_H(u,v)^2p_v(c)\theta - d_H(u,v)^2p_v(c)^2\theta^2) \\
&\leq \sum_{v \in N_H(u)}d_H(u,v)^2\hat{p}\theta(1- \hat{p}\theta) \\
&= \hat{p}\theta(1-\hat{p}\theta)\sum_{v \in N_H(u)}d_H(u,v)^2 \\
&\leq \hat{p}\theta(1-\hat{p}\theta)\delta \sum_{v \in N_H(u)}d_H(u,v) \\
&= \hat{p}\theta(1-\hat{p}\theta)2 \Delta \delta \\
&< \hat{p}\theta2 \Delta \delta.
\end{align*}
If $uv \notin G_c$ and $uv \in G'_c$, then there exists an edge $uvw \in H$ such that
$\gamma_w(c)=1$. Hence
\[
d'_{G_c}(u)-d_{G_c}(u) \leq 
\sum_{uvw \in H}(\gamma_v(c) + \gamma_w(c)) =
\sum_{v \in N_H(u)} d_H(u,v)\gamma_v(c) =
X.
\]
Applying Theorem \ref{hoeffdingvar} (with $b = \delta$) and (R16),
\begin{align*}
\Pr[d'_{G_c}(u)-d_{G_c}(u) \geq 2\omega_6\Delta\hat{p}\theta]
& \leq \Pr[X \geq \omega_6 \Delta\hat{p}\theta + \omega_6 \Delta\hat{p}\theta] \\
& \leq \Pr[X \geq \Ex[X] + \omega_6 \Delta\hat{p}\theta] \\
& \leq e^{-\omega_6^2 \Delta^2 \hat{p}^2 \theta^2 / (4\hat{p}\theta\Delta \delta + \delta \omega_6  \Delta \hat{p}\theta)} \\
& \leq e^{-\omega_6^2 \Delta^2 \hat{p}^2 \theta^2 / 5 \delta \omega_6  \Delta \hat{p}\theta} \\
& < e^{-\omega_6 \Delta \hat{p} \theta / 5\delta} \\
& \comp{<}{(R16)} e^{-6\log\Delta}.
\end{align*}

\noindent
We now prove (Q3) and (Q5).
The following two claims will be used in both proofs.
\begin{claim}\label{prcol}
For any $v \in U$ and $c \in C(v)$, 
\[
\Pr[v \notin U' | c \notin L(v)] \geq \Pr[v \notin U'] \geq 3\theta/4,
\]
and if $uv \in G_c$, then
\begin{align*}
\Pr[v \notin U' | c \notin L(u)] &\geq \Pr[v \notin U'] - \theta\hat{p} \geq 5\theta/8, \\
\Pr[v \notin U' | c \notin L(u) \cup L(v)] &\geq \Pr[v \notin U'] - \theta\hat{p} \geq 5\theta/8.
\end{align*}
\end{claim}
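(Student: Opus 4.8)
The plan is to bound $\Pr[v \notin U']$ from below by showing that there is at least one color $c' \in C(v)$ that survives at $v$ and is activated at $v$, since in that case $v$ is colored (hence $v \notin U'$). First I would lower bound $\Pr[v \text{ is colored}]$ by a union-type argument: for each color $c'$, the probability that $c'$ survives at $v$ is $q_v^i(c')$, the probability $\gamma_v(c')=1$ is $\theta p_v^i(c')$, and these two events are \emph{negatively correlated} (activating $c'$ at $v$ does not affect whether $c'$ survives at $v$, since survival depends only on the $\gamma$'s at $N(v)-v$, so in fact they are independent). Summing over disjoint events ``$v$ colored with $c'$'' — which are disjoint because the algorithm colors $v$ with exactly one color — and then using inclusion-exclusion to discard the second-order terms, I would get something like
\[
\Pr[v \notin U'] \;\geq\; \sum_{c'} \theta p_v^i(c') q_v^i(c') - (\text{second order terms}),
\]
and then use (P1) (so $w(p_v^i) \approx 1$), the bound \eqref{qbound} on $q_v^i(c')$ together with (P2), (P3) which control $e_v^i$ and $f_v^i$ (these being the sums that appear in \eqref{qbound}), to conclude $\sum_{c'} p_v^i(c') q_v^i(c') \geq 1 - O(\theta\omega\cdots)$ is close to $1$, hence $\Pr[v \notin U'] \geq 3\theta/4$ after absorbing the lower-order corrections using the relations (R1)--(R21), in particular those controlling $\theta\omega$ and the decay factors $(1-\theta/3)^i$, $(1-\theta/4)^i$. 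The factor $3/4$ rather than $1$ is exactly what \eqref{prcnotinuvh}-type slack and (R18) ($1-10\epsilon \geq 3/4$) are designed to give.

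For the conditional bounds, the key observation is that conditioning on $c \notin L(v)$ can only \emph{help}: the event ``$v$ colored with some $c'$'' is, for $c' = c$, positively aided by $c \notin L(v)$, and for $c' \neq c$ the conditioning is on variables ($\gamma_w(c)$ for $w \in N(v)$) that are independent of the variables governing survival/activation of $c'$ (namely the $\gamma_w(c')$). So $\Pr[v \notin U' \mid c \notin L(v)] \geq \Pr[v \notin U']$ follows by splitting on which color $v$ gets and using independence across colors; this gives the first displayed inequality. For the second and third, where we condition on $c \notin L(u)$ (with $uv \in G_c$), the conditioning event now involves $\gamma_w(c)$ for $w \in N(u)$, which may include some $w \in N(v)$ (if $u$ and $v$ have common neighbors in color $c$ or in $H$); but the \emph{only} color whose activation/survival at $v$ is affected is $c$ itself. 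The worst case is that conditioning on $c \notin L(u)$ makes it slightly more likely that $c \in L(v)$ (so $v$ is slightly less likely to be colored \emph{with $c$}), and the loss is at most the probability that $v$ would have been colored with $c$ in the first place, which is at most $\theta p_v^i(c) \leq \theta\hat{p}$. Hence $\Pr[v \notin U' \mid c \notin L(u)] \geq \Pr[v \notin U'] - \theta\hat{p} \geq 3\theta/4 - \theta\hat{p} \geq 5\theta/8$, the last step using $\hat{p} < 1/8$, which follows from (R21) and the upper bound on $\hat{p}$ in Claim \ref{paramsclaim} once $\Delta$ is large; the same bound handles the $L(u)\cup L(v)$ case since $c \notin L(u) \cup L(v)$ is a stronger hypothesis that still only touches the color-$c$ coordinate at $v$.

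The main obstacle I anticipate is making the independence/negative-correlation bookkeeping precise: one must carefully isolate that, across distinct colors, the relevant indicator variables live on disjoint coordinate sets of the product probability space, while within a single color the survival event at $v$ and the activation event $\gamma_v(c)=1$ are independent because survival is measurable with respect to coordinates at $N(v)-v$ only. A secondary technical point is the lower bound $\Pr[v \notin U'] \geq 3\theta/4$ itself: getting the constant right requires that the cumulative error from (P1) and the tail of the geometric series $\sum_j (1-\theta/4)^j \leq 4/\theta$ be absorbed, which is where (R17), (R18), and the smallness of $\epsilon$ (Claim \ref{paramsclaim}, $\epsilon \leq 1/40$) enter. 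Everything else is a routine union bound plus the elementary inequality $1 - \Pr[\text{at least one}] \leq \sum \Pr$ in reverse.
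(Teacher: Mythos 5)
Your approach matches the paper's: express $\{v \notin U'\}$ as the union of per-color events $R_d = \{\gamma_v(d)=1,\ d\notin L(v)\}$, exploit independence across colors, apply Bonferroni, and lower-bound $\sum_d \theta p_v(d) q_v(d)$ via \eqref{qbound}, (P1)--(P3), the hypothesis $|B(v)|\le\epsilon/\hat p$, and (R18); the paper carries this out exactly as you propose. One imprecision in your explanation of the conditional bound is worth flagging: you say the loss of $\theta\hat p$ comes because conditioning on $c\notin L(u)$ ``makes it slightly more likely that $c\in L(v)$,'' but this is backwards --- both $\{c\notin L(u)\}$ and $\{c\notin L(v)\}$ are decreasing events in the $\gamma$'s, so by FKG the conditioning makes $c\notin L(v)$ \emph{more} likely. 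The mechanism the paper identifies is different and cleaner: since $uv\in G_c$, the event $c\notin L(u)$ forces $\gamma_v(c)=0$, which deterministically kills $R_c$. That said, the formal step you also state --- drop $R_c$ from the union, use independence of the remaining $R_d$ from $\{c\notin L(u)\}$, and bound $\Pr[R_c]\le\theta p_v(c)\le\theta\hat p$ --- is what actually carries the argument and is correct, so your proof goes through; the last step $3\theta/4 - \theta\hat p \ge 5\theta/8$ needs $\hat p\le 1/8$, which holds since $\hat p\le\Delta^{-11/24}$.
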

\begin{proof}[Proof of claim]
The vertex $v$ is colored (i.e., $v \notin U'$) if and only if for some color $d \notin B(v)$,
$\gamma_v(d)=1$ and $d \notin L(v)$. Let $R_d$ denote the event that
$\gamma_v(d)=1$ and $d \notin L(v)$.
If $c \in B(v)$, then $v$ cannot be colored $c$, so the event
$v \notin U'$ is independent of the events $c \notin L(v)$ and $c \notin L(u)$;
hence
\[
\Pr[v \notin U'] 
= \Pr[v \notin U' | c \notin L(v)] = \Pr[v \notin U' | c \notin L(u)] 
= \Pr[v \notin U' | c \notin L(u) \cup L(v)].
\]
Otherwise, 
\begin{align*}
\Pr[v \notin U' | c \notin L(v)]
&= \frac{\Pr[v \notin U', c \notin L(v)]}{\Pr[c \notin L(v)]} \\
&= \frac{\Pr[\cup_{d \notin B(u)}R_d, c \notin L(v)]}{\Pr[c \notin L(v)]} \\
&= \frac{\Pr[(\cup_{d \notin B(u)+c}R_d \cup R_c), c \notin L(v)]}{\Pr[c \notin L(v)]} \\
&= \frac{\Pr[(\cup_{d \notin B(u)+c}R_d \cup \gamma_v(c)=1), c \notin L(v)]}{\Pr[c \notin L(v)]} \\
&= \frac{\Pr[(\cup_{d \notin B(u)+c}R_d \cup \gamma_v(c)=1)]\Pr[c \notin L(v)]}{\Pr[c \notin L(v)]} \\
&= \Pr[(\cup_{d \notin B(v)+c}R_d) \cup (\gamma_v(c) = 1)] \\
&\geq \Pr[(\cup_{d \notin B(v)+c}R_d) \cup R_c] \\
&= \Pr[v \notin U'].
\end{align*}

Suppose $uv \in G_c$. If $c \notin L(u)$, then $\gamma_w(c)=0$ for all $w \in N_{G_c}(u)$, so in particular, $\gamma_v(c)=0$. Consequently, 
\[
\Pr[R_c | c \notin L(u) \cup L(v)] = \Pr[\gamma_v(c)=1 \cap c \notin L(v) | c \notin L(u) \cup L(v)] = 0.
\]
So by the independence of colors and the inequality
\[
\Pr[\cup_{d \in C(v)-B(v)} R_d] \leq \Pr[\cup_{d \in C(v)-B(v)-c} R_d] + \Pr[R_c],
\]
we obtain
\begin{align*}
\Pr[v \notin U' | c \notin L(u) \cup L(v)]
&= \Pr[\cup_{d \notin B(v)} R_d | c \notin L(u) \cup L(v)] \\
&= \Pr[\cup_{d \notin B(v)+c} R_d] \\
&= \Pr[\cup_{d \in C(v)-B(v)-c} R_d] \\
&\geq \Pr[\cup_{d \in C(v)-B(v)} R_d] - \Pr[R_c] \\
&\geq \Pr[v \notin U'] - \theta\hat{p}.
\end{align*}
Since we only used the condition $c \notin L(u)$, 
this also implies 
\[
\Pr[v \notin U' | c \notin L(u)] \geq \Pr[v \notin U'] - \theta\hat{p}.
\]
To finish the proof of the claim, 
we now show $\Pr[v \notin U']  \geq 3\theta/4$.
First,
\begin{align*}
\Pr[v \notin U'] 
&= \Pr[\cup_{d \notin B(v)}R_d] \\
&\geq \sum_{d \notin B(v)}\Pr[R_d] - \sum_{d,d' \notin B(v)}\Pr[R_d]\Pr[R_{d'}] \\
&= \sum_{d \notin B(v)}\theta p_v(d)q_v(d) - \sum_{d,d' \notin B(v)}\theta^2 p_v(d)p_v(d')q_v(d)q_v(d') \\
&\geq \theta \sum_{d\in C(v)} p_v(d)q_v(d) - \theta \sum_{d\in B(v)}p_v(d)q_v(d) - \theta^2\sum_{d,d' \notin B(v)}p_v(d)p_v(d') \\
&\geq \theta \sum_{d\in C(v)} p_v(d)q_v(d) - \theta |B(v)|\hat{p} - \theta^2\sum_{d,d' \notin B(v)}p_v(d)p_v(d').
\end{align*}
By \eqref{qbound},
\begin{align*}
q_v(d)
&\geq 1-\sum_{uvw \in H} \theta^2 p_u(d)p_w(d) - \sum_{uv \in G_d}\theta p_u(d) \\
&= 1-\theta^2\sum_{uvw \in H} p_u(d)p_w(d) - \theta \sum_{uv \in G_d}p_u(d) \\
&= 1-\theta^2 e_v(d) - \theta f_v(d).
\end{align*}
Since $\sum_{d \in C(v)}p_v(c) \leq \sqrt{2}$ (by (P1) and (R4)),
\begin{align*}
\theta^2 \sum_{d,d' \notin B(v)}  p_v(d)p_v(d') 
\leq  \frac{1}{2}\theta^2\sum_{d\in C(v)}\sum_{d' \in C(v)-d} p_v(d)p_v(d') 
\leq \frac{1}{2}\theta^2(\sum_{d\in C}p_v(d))^2 
\leq \theta^2.
\end{align*}
By our lemma's assumption, $|B(v)| \leq \epsilon/\hat{p}$.
By (P3), $f_v < 8\omega$, so $\theta f_v < 8\epsilon$.
By (P2), $e_v \leq \omega + T/\omega_2$, so (R7) implies $\theta^2 e_v < \epsilon/3$.
Using these three inequalities, $\sum_{d\in C(v)}p_v(c) \geq (1-\epsilon/3)$, and (R18), we finally obtain
\begin{align*}
\Pr[v \notin U']
&\geq \theta \sum_{d\in C(v)} p_v(d)(1-\theta^2 e_v(d) - \theta f_v(d)) - \theta |B(v)|\hat{p} - \theta^2 \\
&= \theta \sum_{d\in C(v)} p_v(d) - \theta^3 \sum_{d\in C(v)} p_v(d)e_v(d) - \theta^2 \sum_{d\in C(v)} p_v(d)f_v(d) - \theta |B(v)|\hat{p}  - \theta^2 \notag \\
&\geq \theta \sum_{d\in C(v)} p_v(d) - \theta^3 \sum_{d\in C(v)} p_v(d)e_v(d) - \theta^2 \sum_{d\in C(v)} p_v(d)f_v(d) - \theta\epsilon  - \theta^2 \notag \\
&= \theta \sum_{d\in C(v)} p_v(d) - \theta^3 e_v - \theta^2 f_v - \theta\epsilon  - \theta^2  \\
&\geq \theta(1-\epsilon/3) - \theta \epsilon/3 - 8\theta \epsilon - \theta\epsilon - \theta \epsilon/3 \\
&= \theta(1-10\epsilon) \\
&\geq 3\theta/4.
\end{align*}
\end{proof}

\noindent
Recall that $m$ is a fixed constant.
\begin{claim}\label{conccond}
For each $l = 0, \dots, m-2$, let
\[
N^0(u,l) = \{v \in N^0_H(u)-N^0_G(u): \Delta^{l/2m} < d^0_H(u,v) \leq \Delta^{(l+1)/2m}\},
\]
and for $l = m-1$, let
\[
N^0(u,l) = \{v \in N^0_H(u): d^0_H(u,v) > \Delta^{l/2m} \} \cup N^0_G(u).
\]
For each $l$ and color $c$, 
let $\mathcal{A}_{c,l}$ be the event that $\gamma_v(c)=1$ for at most $\Delta^{1-l/2m}\hat{p}$
vertices $v \in N^0(u,l)$. 
Let $\mathcal{A}$ denote the event that $\mathcal{A}_{c,l}$ holds for all $l$ and $c$.
Then
\[
\Pr[\bar{\mathcal{A}}] \leq e^{-10\log\Delta}.
\]
\end{claim}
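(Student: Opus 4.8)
The plan is to bound $\Pr[\bar{\mathcal{A}}]$ by a union bound over all pairs $(c,l)$ and then show each individual failure probability $\Pr[\bar{\mathcal{A}}_{c,l}]$ is tiny. Since there are at most $C$ colors and $m$ values of $l$, and $m$ is an absolute constant while $C = \sqrt{\Delta/\omega}$ is polynomial in $\Delta$, it suffices to show $\Pr[\bar{\mathcal{A}}_{c,l}] \leq e^{-11\log\Delta}$ (say), and the extra factor $Cm$ is absorbed. So fix a color $c$ and an index $l$.

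For fixed $(c,l)$, let $Y_{c,l} = \sum_{v \in N^0(u,l)} \gamma_v(c)$ count the activated vertices in the $l$-th degree class. The key point is that the buckets are defined so that $|N^0(u,l)|$ is controlled: since $H$ has maximum $3$-degree $\Delta$, the number of edges through $u$ is at most $\Delta$, and each vertex $v \in N^0(u,l)$ with $l \leq m-2$ has $d^0_H(u,v) > \Delta^{l/2m}$, so $|N^0(u,l)| \leq 2\Delta / \Delta^{l/2m} = 2\Delta^{1-l/2m}$ (the factor $2$ from double-counting edges); for $l = m-1$ one uses that $N^0_G(u)$ contributes at most $\Delta_2 \leq \sqrt{\Delta\omega}$ extra vertices, which is comfortably of the same order. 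Hence $\Ex[Y_{c,l}] = \sum_v \theta p^0_v(c) \leq \theta \hat p |N^0(u,l)| \leq 2\theta\hat p \Delta^{1-l/2m}$. We want to show $Y_{c,l}$ does not exceed $\Delta^{1-l/2m}\hat p$. Since $\theta = \epsilon/\omega = o(1)$, the expectation is much smaller than the target threshold, so this is a large-deviation event where the deviation $t$ is a constant (or larger) multiple of $\Delta^{1-l/2m}\hat p$, which itself is $\Delta^{1-l/2m}\hat p \geq \Delta^{1-l/2m}\cdot\Delta^{-1/2}$ by (R21) — and for the smallest bucket $l=m-1$ this is still at least roughly $\Delta^{1/2 - 1/2m}\hat p$, which is polynomially large in $\Delta$. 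The $\gamma_v(c)$ are independent $\{0,1\}$ variables, so Theorem \ref{hoeffding} (or Theorem \ref{hoeffdingvar}, which is better when the mean is small) applies: with $t = \tfrac12\Delta^{1-l/2m}\hat p \geq \Ex[Y_{c,l}]$ say, $\Pr[Y_{c,l} \geq \Ex[Y_{c,l}] + t]$ is at most $\exp(-t^2/(2\Var[Y_{c,l}] + t)) \leq \exp(-t/4)$ roughly, and $t/4 \gg \log\Delta$ since $t$ is polynomial in $\Delta$. One should check the worst case is $l = m-1$ (smallest $t$), and verify the relevant numerical relation among the parameters makes $t \geq 11\log\Delta$ hold — this is of the same flavor as (R14) or (R16) and should follow from the parameter choices in Claim \ref{paramsclaim}.

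Assembling: $\Pr[\bar{\mathcal{A}}] \leq \sum_{c}\sum_{l} \Pr[\bar{\mathcal{A}}_{c,l}] \leq Cm \cdot e^{-11\log\Delta} \leq e^{-10\log\Delta}$ for $\Delta$ large, using $Cm = o(\Delta)$.

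The main obstacle is bookkeeping rather than conceptual: one must carefully handle the $l = m-1$ bucket, where the degree classes are merged with $N^0_G(u)$ and the threshold $\Delta^{1-l/2m}\hat p = \Delta^{1/2+1/2m}\hat p$ is smallest, so the Hoeffding/Bernstein exponent is weakest there; that is exactly where (R21) ($\hat p \geq \Delta^{-1/2}$) and the bound on $\Delta_2$ and $\delta$ are needed to guarantee the deviation $t$ still dominates $\log\Delta$ by a polynomial factor. A secondary point is to confirm that the relevant parameter inequality needed here is indeed implied by (R1)–(R21) (or by the stronger, cleaner bounds of Claim \ref{paramsclaim}); I would state the precise inequality used and note it is "straightforward to check," consistent with the style of the surrounding proofs.
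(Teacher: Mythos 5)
Your proposal is correct and follows essentially the same route as the paper: the same union bound over the $Cm$ pairs $(c,l)$, the same bucket-size estimates $|N^0(u,l)| \leq 3\Delta^{1-l/2m}$ (using (R20) for the $l=m-1$ bucket), and the same use of (R21) to see that the threshold $\Delta^{1-l/2m}\hat{p} \geq \Delta^{1/2m}$ dwarfs both the mean (since $\theta = o(1)$) and $\log\Delta$. The only difference is cosmetic: the paper bounds each $\Pr[\bar{\mathcal{A}}_{c,l}]$ by the elementary first-moment estimate $\binom{|N^0(u,l)|}{k}(\hat{p}\theta)^k$ with $k = \Delta^{1-l/2m}\hat{p}$ rather than invoking Theorem \ref{hoeffdingvar}, but both yield an exponent of order $\Delta^{1/2m}$.
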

\begin{proof}[Proof of claim]
Suppose $l < m - 1$. Since each $v \in N^0(u,l)$ contributes at least $\Delta^{l/2m}$ edges 
to $d^0_H(u)$, and each edge is counted at most twice,
\[
|N^0(u,l)| \leq 2\Delta/\Delta^{l/2m} = 2\Delta^{1-l/2m}.
\]
If $l=m-1$, 
\[
|N^0(u,l)| \leq 2\Delta/\Delta^{l/2m} + \Delta_2 = 2\Delta^{1-l/2m} + \Delta_2 
\comp{<}{(R20)} 3\Delta^{1-l/2m}.
\]
Thus $|N^0(u,l)| < 3\Delta^{1-l/2m}$ for each $l$.

Since $\Pr[\gamma_v(c)=1] \leq \hat{p}\theta$ and $3e\theta < 1/e$,
\begin{align*}
\Pr[\bar{\mathcal{A}}_{c,l}] 
\leq \binom{|N^0(u,l)|}{\Delta^{1-l/2m}\hat{p}}(\hat{p}\theta)^{\Delta^{1-l/2m}\hat{p}}
&\leq \binom{3\Delta^{1-l/2m}}{\Delta^{1-l/2m}\hat{p}}(\hat{p}\theta)^{\Delta^{1-l/2m}\hat{p}} \\
&\leq (\frac{3e}{\hat{p}})^{\Delta^{1-l/2m}\hat{p}}(\hat{p}\theta)^{\Delta^{1-l/2m}\hat{p}} \\
&= (3e\theta)^{\Delta^{1-l/2m}\hat{p}} \\
&< e^{-\Delta^{1-l/2m}\hat{p}} \\
&\comp{\leq}{(R21)} e^{-\Delta^{(m+1)/2m} \Delta^{-1/2}} \\
&= e^{-\Delta^{1/2m}}.
\end{align*}
So by the union bound,
\[
\Pr[\bar{\mathcal{A}}] \leq C m e^{-\Delta^{1/2m}} \leq e^{-10\log\Delta}.
\]
\end{proof}

\noindent \textbf{Proof of (Q3)}.
Observe that
\begin{align*}
f'_u 
&= \sum_c \sum_{uv \in G'_c}p'_u(c)p'_v(c) \\
&= \sum_c \sum_{uv \in G_c}p'_u(c)p'_v(c)\I[uv \in G'_c] 
+ \sum_c \sum_{\substack{uv \notin G_c \\ uv \in G'_c}} p'_u(c)p'_v(c) \\
&\leq \sum_c \sum_{uv \in G_c}p'_u(c)p'_v(c)\I[v \in U'] \\
&+ \sum_c \sum_{\substack{uvw \in H}} (p'_u(c)p'_v(c)\I[\gamma_w(c)=1]+p'_u(c)p'_w(c)\I[\gamma_v(c)=1]) \\
&= D_1 + D_2,
\end{align*}
where
\[
D_1 = \sum_c \sum_{uv \in G_c}p'_u(c)p'_v(c)\I[v \in U'],
\]
and
\[
D_2 = \sum_c \sum_{\substack{uvw \in H}} (p'_u(c)p'_v(c)\I[\gamma_w(c)=1]+p'_u(c)p'_w(c)\I[\gamma_v(c)=1]).
\]
To bound $D_1$, we first prove that for $uv \in G_c$,
\begin{equation}\label{exuvI}
\Ex[p'_u(c)p'_v(c)\I[v \in U']] \leq p_u(c)p_v(c)(1-9\theta/16).
\end{equation}
First assume that $p'_u(c)$ and $p'_v(c)$ are determined by \eqref{flip1}.
If $c \in L(u)\cup L(v)$, then $p'_u(c)p'_v(c)=0$, 
so using \eqref{prcnotinuv}, Claim \ref{prcol}, and then (R2),
\begin{align*}
\Ex[p'_u(c)p'_v(c)\I[v \in U']]
&= \Ex[p'_u(c)p'_v(c) | v \in U']\Pr[v \in U'] \\
&\leq \frac{p_u(c)}{q_u(c)}\frac{p_v(c)}{q_v(c)}
 \Pr[c \notin L(u) \cup L(v) | v \in U']\Pr[v \in U']  \\
&= \frac{p_u(c)}{q_u(c)}\frac{p_v(c)}{q_v(c)} 
 \Pr[v \in U' | c \notin L(u) \cup L(v)] \Pr[c \notin L(u) \cup L(v)] \\
&\comp{\leq}{\eqref{prcnotinuv}} p_u(c)p_v(c) (1+1/\omega_0)
 \Pr[v \in U' | c \notin L(u) \cup L(v)] \\
&\comp{\leq}{C.\ref{prcol}} p_u(c)p_v(c) (1+1/\omega_0) (1-5\theta/8)\\
&\comp{\leq}{(R2)} p_u(c)p_v(c)(1-9\theta/16).
\end{align*}
Suppose $p'_u(c)$ is determined by \eqref{flip1} and $p'_v(c)$ is determined by \eqref{flip2}.
Then $p'_u(c)$ and $p'_v(c)$ are independent of each other, and $p'_v(c)$ is independent of
the event $v \in U'$, so
\begin{align*}
\Ex[p'_u(c)p'_v(c)\I[v \in U']]
&= \Ex[p'_u(c)p'_v(c) | v \in U']\Pr[v \in U'] \\
&= \Ex[p'_u(c) | v \in U']\Ex[p'_v(c)]\Pr[v \in U'] \\
&\comp{\leq}{\eqref{expuc}} \Ex[p'_u(c) | v \in U']p_v(c)\Pr[v \in U'] \\
&\leq \frac{p_u(c)}{q_u(c)} 
 \Pr[c \notin L(u) | v \in U'] \Pr[v \in U']  p_v(c) \\
&= \frac{p_u(c)}{q_u(c)}
 \Pr[v \in U' | c \notin L(u)] \Pr[c \notin L(u)] p_v(c) \\
&= p_u(c)p_v(c) 
 \Pr[v \in U' | c \notin L(u)] \\
&\comp{\leq}{C.\ref{prcol}} p_u(c)p_v(c) (1+1/\omega_0) (1-5\theta/8)\\
&\comp{\leq}{(R2)} p_u(c)p_v(c)(1-9\theta/16).
\end{align*}
Similarly, if $p'_u(c)$ is determined by \eqref{flip2} and $p'_v(c)$ is determined by \eqref{flip1},
\begin{align*}
\Ex[p'_u(c)p'_v(c)\I[v \in U']]
&\leq p_u(c)p_v(c) 
 \Pr[v \in U' | c \notin L(v)] \\
&\comp{\leq}{C.\ref{prcol}} p_u(c)p_v(c) (1+1/\omega_0) (1-5\theta/8)\\
&\comp{\leq}{(R2)} p_u(c)p_v(c)(1-9\theta/16).
\end{align*}
If $p'_u(c)$ and $p'_v(c)$ are both determined by \eqref{flip2},
\begin{align*}
\Ex[p'_u(c)p'_v(c)\I[v \in U']]
&= \Ex[p'_u(c)p'_v(c)]\Pr[v \in U'] \\
&= \Ex[p'_u(c)]\Ex[p'_v(c)]\Pr[v \in U']\\
&\comp{\leq}{(C.\ref{prcol})} p_u(c)p_v(c)(1-3\theta/4) \\
& < p_u(c)p_v(c)(1-9\theta/16),
\end{align*}
concluding the proof of \eqref{exuvI}.

By \eqref{exuvI},
\begin{align*}
\Ex[D_1]
&= \sum_c\sum_{uv \in G_c}\Ex[p'_u(c)p'_v(c)\I[v \in U']] \\
&\leq \sum_c \sum_{uv \in G_c}p_u(c)p_v(c)(1-9\theta/16) \\
&= f_u(1-9\theta/16).
\end{align*}

\noindent
For $c \in C(u)$, let 
\[
T_c = \{\gamma_v(c): v \in N(N(u))\} \cup \{\eta_v(c): v \in N(N(u))\}.
\]
Then each $T_c$ is a (vector valued) random variable, and the
set of random variables $\{T_c: c \in C(u)\}$ are mutually independent
and determine the variable $D_1$. 
We will now apply Corollary \ref{bdbad} with parameters:
\begin{itemize}
 \item Independent random variables $T_c: \{c\} \to \{0,1\}^{2|N(N(u))|}$, for each $c \in C(u)$
 \item Events $\mathcal{A}_c = \cap_{l=1}^m {\mathcal{A}_{c,l}}$, for each $c \in C(u)$
       (where $\mathcal{A}_{c,l}$ is from Claim \ref{conccond})
 \item $\mathcal{A} = \prod_{c \in C(u)} \mathcal{A}_c$, for each $c \in C(u)$
       (this is the same $\mathcal{A}$ as in Claim \ref{conccond})
 \item $D_1$ (which is non-negative) in the role of $Y$
 \item $d_{G_c(u)}\hat{p}^2 + m\hat{p}^3\Delta^{1+1/2m}$ in the role of $d_c$.
\end{itemize}
Our goal is thus to bound the effect of $T_c$ on $D_1$ given that $\mathcal{A}$ holds.
Note first that
\[
D_1 = \sum_{uv \in G_c}p'_u(c)p'_v(c)\I[v \in U']
+ \sum_{l=0}^{m-1} \sum_{v \in N^0(u,l)}\I[v \in U'] \sum_{\substack{d \neq c: \\ uv \in G_d}}p'_u(d)p'_v(d).
\]
\noindent
The total effect of $T_c$ on the left hand sum is at most $d_{G_c}(u)\hat{p}^2$,
so consider the right hand sum.
The $p'_u(d)p'_v(d)$ terms are always independent of $T_c$.
Observe that if $\gamma_v(c)=0$, then $\I[v \in U']$ is also independent of $T_c$;
this is because if $\gamma_v(c)=0$, then $v$ can not be colored $c$ in the current round,
so $T_c$ has no impact on whether or not $v \in U'$. 
Thus $T_c$ only affects the term 
\[
\I[v \in U'] \sum_{\substack{d \neq c \\ uv \in G_d}}p'_u(d)p'_v(d)
\]
if $\gamma_v(c)=1$.
So given the event $\mathcal{A}_{c,l}$ from Claim \ref{conccond}, 
$T_c$ affects at most $\Delta^{1-l/2m}\hat{p}$ such terms for
each $l$.
If $v \in N^0(u,l)$, where $l \leq m-2$, the effect is at 
most $d^0_H(u,v)\hat{p}^2 \leq \Delta^{(l+1)/2m}\hat{p}^2$.
If $l = m-1$, the effect is at most $C \hat{p}^2 < \Delta^{1/2}\hat{p}^2$.
Therefore, given $\mathcal{A}$, the effect of $T_c$ on the right hand sum is at most
\[
\sum_{l=0}^{m-2}(\Delta^{1-l/2m}\hat{p})\Delta^{(l+1)/2m}\hat{p}^2
+ (\Delta^{1-(m-1)/2m}\hat{p})\Delta^{1/2}\hat{p}^2
= m\hat{p}^3\Delta^{1+1/2m}.
\]
Given $\mathcal{A}$, $T_c$ thus affects $D_1$ by at most 
\[
d_{G_c}(u)\hat{p}^2 + m\hat{p}^3\Delta^{1+1/2m}.
\]
Since $\sum_c d_{G_c}(u) \leq \Delta + \Delta_2 < 2\Delta$ and, by (P6), $d_{G_c}(u) \leq 3\omega_6 T\theta\Delta\hat{p}$,
\begin{align*}
&\sum_c (d_{G_c}(u)\hat{p}^2 + m\hat{p}^3\Delta^{1+1/2m})^2 \\
&\leq \hat{p}^4\sum_c d_{G_c}(u)^2 + 4m\hat{p}^5\Delta^{1+1/2m}\Delta + Cm^2\hat{p}^6\Delta^{2+1/m} \\
&\leq 3\hat{p}^5\omega_6T\theta\Delta\sum_c d_{G_c}(u) + 4m\hat{p}^5\Delta^{1+1/2m}\Delta + Cm^2\hat{p}^6\Delta^{2+1/m} \\
&\leq 6\omega_6T\theta\hat{p}^5\Delta^2 + 4m\hat{p}^5\Delta^{2+1/2m} + Cm^2\hat{p}^6\Delta^{2+1/m}.
\end{align*}
Together with Claim \ref{conccond} and (R10), Corollary \ref{bdbad} now implies
\begin{align*}
\Pr[D_1 > f_u(1-\theta/2) + 1/2\omega_3] 
&\leq \Pr[D_1 > f_u(1-9\theta/16)/\Pr[\mathcal{A}] + 1/2\omega_3] \\
&\leq \Pr[D_1 > \Ex[D_1]/\Pr[\mathcal{A}] + 1/2\omega_3] \\
&\comp{\leq}{C.\ref{bdbad}} e^{-1/4\omega_3^2 (6\omega_6T\theta\hat{p}^5\Delta^2 + 4m\hat{p}^5\Delta^{2+1/2m} + Cm^2\hat{p}^6\Delta^{2+1/m})} + \Pr[\bar{\mathcal{A}}] \\
&\comp{\leq}{(R10)} e^{-7\log\Delta} + \Pr[\bar{\mathcal{A}}] \\
&\comp{\leq}{C.\ref{conccond}} e^{-7\log\Delta} + e^{-10\log\Delta} \\
&< e^{-6\log\Delta}.
\end{align*}

We now bound $D_2$. 
We first prove that for any edge $uvw$,
\begin{equation}\label{exuvgivenw}
\Ex[p'_u(c)p'_v(c) | \gamma_w(c)=1] \leq p_u(c)p_v(c)(1+1/\omega_0).
\end{equation}
Assume that both $p'_u(c)$ and $p'_v(c)$ are determined by \eqref{flip1}.
If $c \in L(u)$ or $c \in L(v)$, then $p'_u(c)p'_v(c)=0$, 
so by \eqref{prcnotinuv},
\begin{align*}
\Ex[p'_u(c)p'_v(c) | \gamma_w(c)=1] 
&\leq \frac{p_u(c)}{q_u(c)}\frac{p_v(c)}{q_v(c)}\Pr[c \notin L(u) \cup L(v) | \gamma_w(c)=1] \\
&\leq \frac{p_u(c)}{q_u(c)}\frac{p_v(c)}{q_v(c)}\Pr[c \notin L(u) \cup L(v)] \\
&\comp{\leq}{\eqref{prcnotinuv}} p_u(c)p_v(c)(1+1/\omega_0).
\end{align*}
Suppose $p'_u(c)$ is determined by \eqref{flip1} 
and $p'_v(c)$ is determined by \eqref{flip2}.
Then $p'_u(c)$ and $p'_v(c)$ are independent of each other, and
$p'_v(c)$ is independent of the event $\gamma_w(c)=1$, so
\begin{align*}
\Ex[p'_u(c)p'_v(c) | \gamma_w(c)=1] 
&= \Ex[p'_u(c) | \gamma_w(c)=1]\Ex[p'_v(c)] \\
&\comp{=}{\eqref{expuc}} \Ex[p'_u(c) | \gamma_w(c)=1]p_v(c) \\
&\leq \frac{p_u(c)}{q_u(c)}\Pr[c \notin L(u) | \gamma_w(c)=1] p_v(c)\\
&\leq \frac{p_u(c)}{q_u(c)}\Pr[c \notin L(u)] p_v(c) \\
&= p_u(c)p_v(c) \\
&< p_u(c)p_v(c)(1+1/\omega_0).
\end{align*}
If $p'_u(c)$ and $p'_v(c)$ are both determined by \eqref{flip2}, then
\begin{align*}
\Ex[p'_u(c)p'_v(c) | \gamma_w(c)=1] 
= \Ex[p'_u(c)p'_v(c)]
= \Ex[p'_u(c)]\Ex[p'_v(c)]
\comp{=}{\eqref{expuc}} p_u(c)p_v(c),
\end{align*}
which establishes \eqref{exuvgivenw}.

Now, by \eqref{exuvgivenw},
\begin{align*}
\Ex[D_2] &= \sum_c \sum_{uvw} (\Ex[p'_u(c)p'_v(c)\I[\gamma_w(c)=1]] + 
\Ex[p'_u(c)p'_w(c)\I[\gamma_v(c)=1]]) \\
&= \sum_c \sum_{uvw} \Ex[p'_u(c)p'_v(c) | \gamma_w(c)=1]\Pr[\gamma_w(c)=1] \\
&+
\sum_c \sum_{uvw} \Ex[p'_u(c)p'_w(c) | \gamma_v(c)=1]\Pr[\gamma_v(c)=1] \\
&\leq (1+1/\omega_0)\sum_c \sum_{uvw} (p_u(c)p_v(c)\Pr[\gamma_w(c)=1] + p_u(c)p_w(c)\Pr[\gamma_v(c)=1]) \\
&= (1+1/\omega_0)\sum_c \sum_{uvw} (p_u(c)p_v(c)\theta p_w(c) + p_u(c)p_w(c)\theta p_v(c)) \\
&= (1+1/\omega_0)2\theta e_u.
\end{align*}

\noindent
Again, let 
\[
T_c = \{\gamma_v(c): v \in N(N(u))\} \cup \{\eta_v(c): v \in N(N(u))\}.
\]
Then $D_2$ is determined by the set of random variables $\{T_c: c \in C(u)\}$ .
Observe that
\[
D_2 = \sum_c \sum_{l=0}^{m-1} \sum_{v \in N_H(u) \cap N^0(u,l)}  \I[\gamma_v(c)=1] 
\sum_{w \in N_H(u,v)}p'_u(c)p'_w(c).
\]
The random variable $T_c$ does not affect terms of the form 
$\I[\gamma_v(d)=1]\sum_{w \in N(u,v)}p'_u(d)p'_w(d),$
where $d \neq c$.
$T_c$ affects the term $\I[\gamma_v(c)=1]\sum_{w \in N(u,v)}p'_u(c)p'_w(c)$ only if $\gamma_v(c)=1$;
in this case, the effect is at most $d_H(u,v)\hat{p}^2$.
Thus, given the event $\mathcal{A}$ from Claim \ref{conccond}, 
the total effect of $T_c$ on $D_2$ is bounded by
\begin{align*}
\sum_{l=0}^{m-2} \Delta^{1-l/2m}\hat{p} \Delta^{(l+1)/2m}\hat{p}^2 + \Delta^{1-(m-1)/2m}\hat{p}\delta \hat{p}^2
& < m \Delta^{1+1/2m}\hat{p}^3 + \delta\Delta^{1/2 + 1/2m} \hat{p}^3.
\end{align*}
By Corollary \ref{bdbad}, (R11), and Claim \ref{conccond},
\begin{align*}
\Pr[D_2 > 3\theta e_u + 1/2\omega_3] 
&\leq \Pr[D_2 > (1+1/\omega_0)2\theta e_u/\Pr[\mathcal{A}] + 1/2\omega_3] \\ 
&\comp{\leq}{C.\ref{bdbad}} e^{-2/(4\omega_3^2 C (m \Delta^{1+1/2m}\hat{p}^3 + \delta\Delta^{1/2 + 1/2m} \hat{p}^3)^2)} + \Pr[\bar{\mathcal{A}}]\\ 
&\comp{\leq}{(R11)} e^{-7\log\Delta} + \Pr[\bar{\mathcal{A}}] \\
&\comp{\leq}{C.\ref{conccond}} e^{-7\log\Delta} + e^{-10\log\Delta} \\
&\leq e^{-6\log\Delta}.
\end{align*}
Therefore, with probability at least $1-2\Delta^{-5}$,
\begin{align*}
f'_u 
&\leq f_u(1-\theta/2) + 1/2\omega_3 + 3\theta e_u + 1/2\omega_3 \\
&\leq f_u(1-\theta/2) + 3\theta e_u + 1/\omega_3.
\end{align*}

\noindent \textbf{Proof of (Q5)}.
Since
\[
d'_H(u) 
= \frac{1}{2}\sum_{v \in N_H(u)}\sum_{w \in N_H(u,v)} \I[v,w \in U']
\leq \frac{1}{2}\sum_{v \in N_H(u)}d_H(u,v)\I[v \in U'],
\]
Claim \ref{prcol} implies
\[
\Ex[d'_H(u)] \leq \frac{1-3\theta/4}{2}\sum_{v \in N_H(u)}d_H(u,v) = (1-3\theta/4)d_H(u).
\]
We prove concentration in the same way as in the proof of (Q3).
Let 
\[
T_c = \{\gamma_v(c): v \in N(N(u))\} \cup \{\eta_v(c): v \in N(N(u))\}.
\]
The random variable $d'_H(u)$ is determined by the
set of random variables $\{T_c: c \in C(u)\}$.
For $v \in N(u)$,  $T_c$ affects the term $d_H(u,v)\I[v \in V']$ only if
$\gamma_v(c)=1$, and in this case, the effect is at most $d_H(u,v)$. 
Thus, given the event $\mathcal{A}$ from Claim \ref{conccond}, 
$T_c$ affects $d'_H(u)$ by at most
\[
\sum_{l=0}^{m-2} \Delta^{1-l/2m}\hat{p}\Delta^{(l+1)/2m} + \Delta^{1-(m-1)/2m}\hat{p}\delta
< m \Delta^{1+1/2m}\hat{p} + \Delta^{1/2 + 1/2m} \hat{p}\delta.
\]
By Corollary \ref{bdbad}, (R14), and Claim \ref{conccond}, 
\begin{align*}
\Pr[d'_H(u) > (1-\theta/2)d_H(u)+\omega_5]
&\leq \Pr[d'_H(u) > (1-3\theta/4)d_H(u)/\Pr[\mathcal{A}]+\omega_5] \\
&\comp{\leq}{C.\ref{bdbad}} e^{-2\omega_5^2 / C(m \Delta^{1+1/2m}\hat{p} + \Delta^{1/2 + 1/2m} \hat{p}\delta)^2} + \Pr[\bar{\mathcal{A}}] \\
&\comp{\leq}{(R14)} e^{-7\log\Delta} + \Pr[\bar{\mathcal{A}}] \\
&\comp{\leq}{C.\ref{conccond}} e^{-7\log\Delta} + e^{-10\log\Delta} \\
&\leq e^{-6\log\Delta}.
\end{align*}

\subsection{Final Step}\label{sec_finalstep}
After the iterative portion of the algorithm, 
some vertices will still be uncolored.
Assuming (R1)-(R21) and Lemmas \ref{mainlemma}, \ref{auxlemma}, and \ref{bbound} hold,
we color them using the Asymmetric Local Lemma as follows.
Suppose $u$ has not been colored.
By (P1), (R4), Lemma \ref{bbound}, and (R18),
\begin{align*}
\sum_{c \in C(u) - B^T(u)}p^T_u(c) 
= \sum_{c \in C(u)}p^T_u(c) - \sum_{c \in B^T(u)}p^T_u(c)
&\comp{\geq}{(P1)} 1-T/\omega_1 - |B^T(u)|\hat{p} \\
&\comp{\geq}{(R4)} 1-o(1) - |B^T(u)|\hat{p} \\
&\comp{\geq}{L.\ref{bbound}} 1-o(1)-\epsilon \\
&\comp{\geq}{(R18)} 1/2.
\end{align*}
For each $c \notin B^T(u)$, define
\[
p_u^*(c) := \frac{p^T_u(c)}{\sum_{c \in C(u) - B^T(u)}p^T_u(c)} \leq 2p^T_u(c).
\]
For each uncolored vertex $u$, 
randomly assign $u$ one color from the distribution given by $p^*_u$.
For an edge $e = uvw \in H^T$, 
let $A_{uvw}$ denote the event that $u$, $v$, and $w$ receive the same color.
By (R7) and definition of $\theta$, $T/\omega_2 = o(\omega/\epsilon)$;
in particular, $T/\omega_2 = o(\omega)$.
So by (Q2), 
\[
e^T_{uvw} \leq e_{uvw}^0 + T/\Delta \omega_2 = 1/C^2 + o(\omega/\Delta) = \omega/\Delta + o(\omega/\Delta).
\]
Therefore
\[
\Pr[A_{uvw}] = \sum_c p^*_u(c)p^*_v(c)p^*_w(c) \leq 8\sum_c p^T_u(c)p^T_v(c)p^T_w(c) = 8e^T_{uvw} \leq 9\omega/\Delta.
\]
For each $c$ and each pair $uv \in G^T_c$, let
$B_{uv,c}$ denote the event that $u$ and $v$ both receive color $c$.
By (P3), for each $u$,
\[
\sum_{c \in C(u)} \sum_{ux \in G^T_c} \Pr[B_{ux,c}] 
\leq 4\sum_{c \in C(u)} \sum_{ux \in G^T_c} p^T_u(c)p^T_x(c)
= 4f^T_u
\leq 32(1-\theta/4)^{T} \omega.
\]
The event $A_{uvw}$ depends on any event $A_e$ or $B_{f,d}$, 
where $u$, $v$, or $w$ is in the edge $e$ or the edge $f$.
Using (P5),
\begin{align*}
&\sum_{e \in H^T: u \in e}\Pr[A_e] + \sum_{e \in H^T: v \in e}\Pr[A_e] 
 + \sum_{e \in H^T: w \in e}\Pr[A_e] \\
&+ \sum_{c \in C(u)} \sum_{ux \in G^T_c}\Pr[B_{ux,c}] + \sum_{c \in C(v)} \sum_{vx \in G^T_c}\Pr[B_{vx,c}]
 \sum_{c \in C(w)} \sum_{wx \in G^T_c}\Pr[B_{wx,c}] \\
&\leq 3(9\omega/\Delta)(1-\theta/3)^{T}\Delta + 3(32)(1-\theta/4)^{T} \omega \\ 
&\leq 123(1-\theta/4)^{T} \omega \\
&\leq 123 e^{-\theta T / 4} \omega \\
&= 123 e^{-5\log\omega / 4} \omega \\
&= 123 (\frac{1}{\omega})^{5/4} \omega \\
&< 1/4.
\end{align*}

The event $B_{uv, c}$ depends on any event $A_e$ or $B_{f,d}$, where $u$ or $v$ is in $e$ or $f$.
Since
\begin{align*}
&\sum_{e \in H^T: u \in e}\Pr[A_e] + \sum_{e \in H^T: v \in e}\Pr[A_e]
+ \sum_{c \in C(u)} \sum_{ux \in G^T_c}\Pr[B_{ux,c}] + \sum_{c \in C(v)} \sum_{vx \in G^T_c}\Pr[B_{vx,c}] \\
&\leq 18(1-\theta/3)^{T} \omega + 64(1-\theta/4)^{T} \omega \\ 
&\leq 1/4,
\end{align*}
the Asymmetric Local Lemma implies that there exists a coloring where none of the events 
$A_{uvw}$ or $B_{uv,c}$ occur.
Since no color in $B^T(u)$ and no color with $p^T_c(u) = 0$ was assigned to $u$,
this coloring, combined with the partial coloring from the algorithm, is a
proper list coloring of $H \cup G$.

\section {Triangle-free hypergraphs}\label{sec_trianglefree}
We will derive Theorem \ref{mainthm} as a corollary of the following theorem:
\begin{thm}\label{mainthm0}
Set $c_0 = 1/86,000$.
Suppose $H$ is a rank 3, triangle-free hypergraph with maximum $3$-degree at most $\Delta$,
maximum 2-degree at most $(c_0 \Delta \log\Delta)^{1/2}$, and maximum codegree at most $\Delta^{6/10}$. 
Then
\[
\chi_l(H) \leq (\frac{\Delta}{c_0 \log\Delta})^{1/2}.
\]
\end{thm}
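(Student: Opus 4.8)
The plan is to derive Theorem \ref{mainthm0} from Theorem \ref{analysisthm}: it suffices to instantiate the hypergraph‑dependent parameters so that (R1)--(R21), the size bound $|C(u)|\le C$, and the three conditions \eqref{prcnotinuvw}, \eqref{prcnotinuvh}, \eqref{prcnotinuv} all hold. Given a list assignment with $|C(u)|\ge(\Delta/(c_0\log\Delta))^{1/2}$ for every $u$, I would discard colors so that each list has size exactly $C=(\Delta/(c_0\log\Delta))^{1/2}$; since $C=\sqrt\Delta/\sqrt\omega$ this fixes $\omega=c_0\log\Delta$ (up to the harmless rounding of $C$). I would then take $\epsilon=1/40$, choose $\hat p$ strictly between $e^{86\omega/\epsilon}\sqrt\omega/\sqrt\Delta$ and $\Delta^{-11/24}$, set $\omega_0$ just above $10\omega^3\log\omega$, and let $\delta,\Delta_2$ be the maximum codegree and maximum $2$‑degree of the input hypergraph. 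The assumptions $\Delta_2\le(c_0\Delta\log\Delta)^{1/2}$ and $\delta\le\Delta^{6/10}$ are precisely two of the inequalities of Claim \ref{paramsclaim}, and the remaining ones are satisfied by the above choices, so Claim \ref{paramsclaim} gives (R1)--(R21). The entire remaining content is \eqref{prcnotinuvw}, \eqref{prcnotinuvh}, \eqref{prcnotinuv}, which is exactly where the triangle‑free and codegree hypotheses are used.

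To verify these, fix an iteration $i$, a color $c$, and (say) an edge $uv\in G^i_c$ for \eqref{prcnotinuv}; the other cases are handled identically, with the base edge being a $3$‑edge $uvw\in H^i$ rather than a $G^i_c$‑edge, and with the triple product $q^i_u(c)q^i_v(c)q^i_w(c)$ in place of the pair product. Write $\mathbf 1_x$ for the event $\gamma^i_x(c)=1$; the $\mathbf 1_x$ are independent with $\Pr[\mathbf 1_x]\le\theta\hat p$. The event $c\notin L(u)$ is the decreasing event $\bigcap_{uab\in H^i}\overline{\mathbf 1_a\mathbf 1_b}\cap\bigcap_{ux'\in G^i_c}\overline{\mathbf 1_{x'}}$, depending only on coins of $N^i(u)$, and analogously for $v$; their probabilities are $q^i_u(c)$ and $q^i_v(c)$. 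The FKG inequality already used in the excerpt gives $\Pr[c\notin L(u)\cup L(v)]\ge q^i_u(c)q^i_v(c)$, so the task is the matching upper bound with multiplicative error $1/\omega_0$. Expanding $q^i_u(c)$, $q^i_v(c)$ and $\Pr[c\notin L(u)\cup L(v)]$ as products over the constraints they involve, and cancelling the constraints that touch no vertex common to $N^i(u)$ and $N^i(v)$, the ratio $\Pr[c\notin L(u)\cup L(v)]/(q^i_u(c)q^i_v(c))$ is a product of correction factors indexed by the shared vertices $Z:=N^i(u)\cap N^i(v)$ and the constraints through them.

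The structural heart is that $Z$ is small and contributes only second‑order corrections. No vertex $x$ can satisfy $ux\in G^i_c$ and $vx\in G^i_c$ simultaneously: replacing each $G^i_c$‑edge by an input edge that witnessed its insertion — a $3$‑edge $uxw_1$ of $H$ with $w_1$ a previously colored vertex, or an original $2$‑edge, and likewise for $vx$ and for $uv$ — one obtains, according to whether the witnessing vertices coincide, either three edges forming a $C_3$ on $\{u,v,x\}$ or three edges through a common vertex forming a $K_4^-$ on $\{u,v,x,w\}$ (whose outer triangle $u,v,x$ still witnesses the triangle condition $\{u,v,x\}\cap e\cap f\cap g=\emptyset$); both are forbidden. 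Since singleton constraints are the only source of a first‑order $(1-\theta\hat p)^{-1}$ correction and they are now ruled out, every remaining shared vertex $x\in Z$ has its coin $\gamma_x$ paired with another coin in at least one of the two survival events, so the associated correction factor is $1+O(\theta^2\hat p^2)$. Moreover $x\in Z$ means $u,v$ have a common neighbour in $H^i\cup G^i_c$; tracing $G^i_c$‑edges back to their witnesses in $H$ again produces a $C_3$, $F_5$, or $K_4^-$ in the input hypergraph unless the configuration degenerates (a witness vertex coincides with $u,v,x$ or with a common vertex of two of the edges), and in each degenerate case the relevant edges all pass through one vertex of $\{u,v,x\}$, so that $x$ (or the witness) is counted by a codegree in $H$; hence $|Z|=O(\delta)$. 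Consequently the ratio is at most $(1+O(\theta^2\hat p^2))^{O(\delta)}=\exp\bigl(O(\delta\,\theta^2\hat p^2)\bigr)\le\exp\bigl(O(\epsilon^2\Delta^{-19/60}\omega^{-2})\bigr)=1+o\bigl(1/(\omega^3\log\omega)\bigr)\le 1+1/\omega_0$, which is \eqref{prcnotinuv}; \eqref{prcnotinuvh} and \eqref{prcnotinuvw} follow the same way, the only extra bookkeeping being that the base $3$‑edge forces each of its vertices into the pairwise intersections but contributes only one further paired‑coin correction. With \eqref{prcnotinuvw}, \eqref{prcnotinuvh}, \eqref{prcnotinuv}, (R1)--(R21) and $|C(u)|=C$ in hand, Theorem \ref{analysisthm} produces a proper list coloring of $H\cup G$, i.e.\ of the input hypergraph, so $\chi_l(H)\le C=(\Delta/(c_0\log\Delta))^{1/2}$.

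The step I expect to be the main obstacle is exactly this structural analysis of $Z$: unlike in the linear setting of \cite{fmcoloring3}, where at most one $G^i_c$‑edge joins any pair and the $2$‑graphs inherit triangle‑freeness, here the monochromatic graphs $G^i_c$ and the original $2$‑edges $G$ are not themselves triangle‑free, so every candidate shared‑neighbour configuration must be pulled back through the history of the algorithm to an honest triangle in the input hypergraph, the broad definition of triangle (in particular the inclusion of $K_4^-$) must be invoked to kill the dangerous doubly‑shared configurations, and each degenerate collapse — coinciding witnesses, already‑colored witness vertices, multiplicities among the accumulated edges — must be separately absorbed into the codegree bound rather than the much larger degree. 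Making all of these cases close uniformly in $i$ and $c$ is the technical core of removing the linear hypothesis.
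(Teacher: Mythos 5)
Your reduction to Theorem \ref{analysisthm} via Claim \ref{paramsclaim} and your identification of \eqref{prcnotinuvw}--\eqref{prcnotinuv} as the only content where triangle-freeness and the codegree bound enter is exactly the paper's plan, and your parameter choices are essentially the paper's ($\epsilon=1/40$, $\omega=\Theta(\log\Delta)$, $\hat p=\Delta^{-11/24}$; the paper takes $\omega_0=1/(19\theta\hat p)$ rather than $\Theta(\omega^3\log\omega)$, but both satisfy the constraints). Where you diverge is in how the near-independence is executed. The paper first proves, as a clean inductive invariant (Claim \ref{trifreeclaim}), that $H^i\cup G^i_c$ itself remains triangle-free, so all structural reasoning happens in the current hypergraph; your proposal instead pulls every $G^i_c$-edge back through the algorithm's history to a witnessing $3$-edge of $H$ each time a forbidden configuration must be excluded. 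That is workable but recreates the content of Claim \ref{trifreeclaim} case by case. With the invariant in hand, the paper gets something stronger than your $|Z|=O(\delta)$ with second-order corrections: the coin sets supporting the restricted events (edges through $u$ avoiding $v$, etc.) are \emph{exactly disjoint} (\eqref{induw}--\eqref{induvedge}), so the restricted events are exactly independent, and the only error is the cost of dropping the constraints coming from edges containing two of the base vertices, which Claim \ref{qxbound} bounds via FKG by a first-order factor $1+3\theta\hat p$ per vertex --- this is what dictates $\omega_0=1/(19\theta\hat p)$.

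Two points in your sketch would need repair before it is a proof. First, the step ``expanding $q_u(c)$, $q_v(c)$ and $\Pr[c\notin L(u)\cup L(v)]$ as products over the constraints they involve and cancelling'' is not a valid identity: the constraints inside a single $q_u(c)$ share vertices of $N^i(u)$, so none of these probabilities factors over constraints; FKG gives only a one-sided inequality. The rigorous version is the paper's: upper-bound $\Pr[c\notin L(u)\cup L(v)]$ by dropping constraints (monotonicity), use exact independence of the now-disjoint supports, and lower-bound each $q_x(c)$ against its restricted version by FKG. Second, for a $3$-edge $uvw$ you cannot assume all three pairwise neighborhoods interact only negligibly without an extra structural fact: the paper's Claim \ref{codegreeclaim} shows that triangle-freeness forces at least one pair, say $\{u,w\}$, to have codegree $1$, and the three-way independence statement is then arranged around that pair (restricting $v$'s event to edges avoiding both $u$ and $w$). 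Your remark that the $3$-edge case ``contributes only one further paired-coin correction'' glosses over exactly this step. With those two repairs your argument converges to the paper's proof; as written, they are the gaps.
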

\noindent
To prove this using Theorem \ref{analysisthm}, we need to find values
for the parameters $\omega$, $\epsilon$, $\omega_0$, and $\hat{p}$ which
satisfy (R1)-(R21), \eqref{prcnotinuvw}, \eqref{prcnotinuvh}, and \eqref{prcnotinuv}, 
and $\omega = c_0\log\Delta$.
We will show that the following values satisfy these criteria:
\begin{align*}
\epsilon &= 1/40 &  \omega &= (1/25)(\epsilon/86)\log\Delta & 
\hat{p} &= \Delta^{-11/24} & \omega_0 &= 1/19\theta\hat{p}.
\end{align*}

\noindent
By Claim \ref{paramsclaim}, these parameters satisfy (R1)-(R21), so
all that remains is to show that inequalities \eqref{prcnotinuvw}, 
\eqref{prcnotinuvh}, and \eqref{prcnotinuv} hold.
Fix a color $c$. 
In Claim \ref{trifreeclaim},  we first show that that hypergraph 
$H \cup G_c$ remains triangle-free throughout the algorithm.
The next three claims then show that if the hypergraph remains triangle-free,
we will have enough independence to derive 
\eqref{prcnotinuvw}, \eqref{prcnotinuvh}, and \eqref{prcnotinuv}.
Throughout the rest of this section, we will be taking intersections and unions over edges;
when we do this, we use the notation $e$ in place of $e \in E(H) \cup E(G_c)$.
\begin{claim}\label{trifreeclaim}
For iteration $i$, if $H^i \cup G^i_c$ is triangle-free, 
then $H^{i+1} \cup G^{i+1}_c$ is triangle-free.
\end{claim}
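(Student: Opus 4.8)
The plan is to argue by contradiction: suppose $H^{i+1}\cup G^{i+1}_c$ contains a triangle, and trace it back to a structure in $H^i\cup G^i_c$ that either is already a triangle (contradicting the hypothesis) or forces the hypergraph $H$ together with the coloring to contain a triangle in the sense of the paper's definition. The key observation is that $H^{i+1}$ is an induced subhypergraph of $H^i$, so all of its edges are edges of $H^i$; the only genuinely new edges are the $2$-edges added to $G^i_c$ to form $G^{i+1}_c$, and each such new edge $uv$ arises from a triple $uvw\in H^i$ with $w$ colored $c$ in the current round. So I would classify a putative new triangle $\{e,f,g\}$ in $H^{i+1}\cup G^{i+1}_c$ according to how many of $e,f,g$ are new edges of this type.

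\textbf{Case analysis.} If none of $e,f,g$ is a new edge, then $\{e,f,g\}$ is already a triangle in $H^i\cup G^i_c$, contradicting the assumption. If exactly one, say $g=uv$, is new, coming from a triple $uvw\in H^i$ with $w$ colored $c$, then replacing $g$ by the edge $uvw$ of $H^i$ gives three edges $e,f,uvw$ of $H^i\cup G^i_c$; since $w\notin\{u,v\}$ and $w$ was colored (hence $w\notin V(H^{i+1})$ while $u,v\in V(H^{i+1})$), $w$ lies in neither $e$ nor $f$, so $u,v,w$ together with $e,f,uvw$ still satisfy the vertex/edge incidence pattern of a triangle, and the condition $\{u,v,w\}\cap e\cap f\cap (uvw)=\emptyset$ is inherited; this contradicts triangle-freeness of $H^i\cup G^i_c$. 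The cases where two or three of $e,f,g$ are new edges are handled the same way — replace each new $2$-edge $u_jv_j$ by its originating triple $u_jv_jw_j$ in $H^i$ — but here one must check that the three ``witness'' vertices $w_j$ do not collide with the triangle's three vertices in a way that destroys the emptiness-of-intersection condition; since each $w_j$ is colored $c$ and thus deleted from $U^{i+1}$, while the three triangle vertices are all in $U^{i+1}$, each $w_j$ is distinct from all three triangle vertices and lies outside the other two edges, so the incidence pattern survives. In every case we reach a contradiction, so $H^{i+1}\cup G^{i+1}_c$ is triangle-free.

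\textbf{Main obstacle.} The delicate point is the two- and three-new-edge cases: after replacing new $2$-edges by their originating triples, the resulting three edges of $H^i\cup G^i_c$ are larger, so I must verify both that the three pairwise intersections still contain the designated vertices $u,v,w$ and that the enlarged edges do not accidentally share a common vertex. The first is automatic since enlarging an edge preserves the containments; the second follows because the only new vertices introduced are the colored witnesses $w_j$, and a colored vertex cannot belong to any edge of $H^{i+1}\cup G^{i+1}_c$ nor be one of the triangle vertices, while the pre-existing vertices of $e,f,g$ already failed to have a common point. I would also note that although two new edges might arise from triples sharing the same witness vertex, this only helps (it cannot create a common point among the triangle vertices), so no separate subcase is needed. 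Finally, the argument uses only that colored vertices leave $U^{i+1}$ and that $H^{i+1}$ is induced, so it applies uniformly to all three types of triangles ($C_3$, $F_5$, $K_4^-$).
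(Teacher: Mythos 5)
Your proof is correct and follows the same underlying idea as the paper: pull each newly added $2$-edge $uv\in G^{i+1}_c\setminus G^i_c$ back to an originating triple $uvx\in H^i$ (with $x$ colored $c$), and observe that the resulting configuration is a triangle already present in $H^i\cup G^i_c$, a contradiction. The crucial observation in both proofs is that the witness $x$ is colored and therefore absent from $U^{i+1}$, so it is distinct from the triangle's three vertices and cannot enter the forbidden common intersection.

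Where you differ — to your credit — is in completeness. The paper's proof asserts outright that the other two edges $e,f$ of the putative triangle lie in $H^i\cup G^i_c$, but this can fail: $e$ or $f$ may themselves be newly added $2$-edges of $G^{i+1}_c\setminus G^i_c$. The paper's "add one edge at a time" framing does not by itself repair this, since the induction step it performs would then need to locate a triangle in the intermediate (already-augmented) hypergraph, whereas the replacement edge $uvx$ only lives in $H^i\cup G^i_c$. Your case analysis over the number of new $2$-edges ($0$, $1$, $2$, or $3$), replacing each by its originating triple and then checking that the three resulting edges remain distinct and that $\{u,v,w\}\cap e'\cap f'\cap g'=\emptyset$ still holds (because the inserted witnesses are colored and the triangle vertices are not), is the correct and complete form of the argument. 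Your remark that coinciding witness vertices cause no trouble is also right, since distinctness of the three enlarged edges is already forced by the distinctness of the original triangle vertices.
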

\begin{proof}
It suffices to show that when the algorithm creates $G^{i+1}_c$ from $G^i_c$ by
adding an edge $uv$ to $G^i_c$, no triangle is created.
Toward a contradiction, suppose that a triangle is created with
distinct edges $uv, e, f \in H^{i+1} \cup G^{i+1}_c$ and distinct vertices $u, v, w$ 
such that $u \in e$, $v \in f$, $w \in e \cap f$, and $u \notin f$, $v \notin e$.
Note that $u,v,w \in V(H^i \cup G^i_c)$ and $e,f \in H^i \cup G^i_c$.
Since $w \in V(H^i \cup G^i_c)$, $w$ has not been colored. Thus there exists a 
vertex $x \in V(H^i)-w$ 
and an edge $uvx \in H^i$ which gave rise to the edge $uv$. The edges
$uvx$, $e$, and $f$ form a triangle with vertices $u$, $v$, and $w$ in $H^i+G^i_c$, 
a contradiction.
\end{proof}
\noindent
In the rest of this section, we define
\[
d(u,v) = | \{e \in H \cup G_c: u,v \in e\} |.
\]
In addition, we drop the superscript from $H^i$ and $G_c^i$.

\begin{claim}\label{codegreeclaim}
Suppose $uvw \in H$,  $d(u, v) \geq 2$, and $d(w, v) \geq 2$. Then $d(u,w) = 1$.
\end{claim}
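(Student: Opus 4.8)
The plan is a proof by contradiction that manufactures a forbidden triangle in $H\cup G_c$. Recall from Claim~\ref{trifreeclaim} (together with the induction run in this section) that $H^i\cup G^i_c$ stays triangle-free for every $i$: the base case $H^0\cup G^0_c = H\cup G$ is triangle-free since the input hypergraph is, and triangle-freeness is preserved by the algorithm. So, with superscripts suppressed, it suffices to rule out $d(u,w)\ge 2$. Note first that $d(u,w)\ge 1$ holds automatically, because the edge $uvw\in H$ already contains the pair $\{u,w\}$; hence if $d(u,w)\ne 1$ there must be a \emph{second} edge $g\in H\cup G_c$ with $u,w\in g$ and $g\ne uvw$.

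First I would extract one witnessing edge from each codegree hypothesis. Since $uvw$ contributes to $d(u,v)$ and $d(u,v)\ge 2$, pick an edge $e\ne uvw$ with $u,v\in e$; since $uvw$ contributes to $d(w,v)$ and $d(w,v)\ge 2$, pick an edge $f\ne uvw$ with $v,w\in f$. Now $e,f,g\in H\cup G_c$ are three edges, each distinct from $uvw$, with $u,v\in e$, $v,w\in f$, $w,u\in g$, and the vertices $u,v,w$ are distinct. This is exactly the skeleton of a triangle in the paper's sense; what remains is the non-degeneracy bookkeeping.

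Second, I would check the non-degeneracy requirements, and here the rank-$3$ hypothesis does all the work: every edge of $H\cup G_c$ has size $2$ or $3$, so any edge containing the three distinct vertices $u,v,w$ must equal $\{u,v,w\}=uvw$. From this single observation everything follows. The edges $e,f,g$ are pairwise distinct (if, say, $e=f$ then $e$ contains $\{u,v\}\cup\{v,w\}=\{u,v,w\}$, forcing $e=uvw$, a contradiction; the cases $e=g$ and $f=g$ are identical), and moreover $w\notin e$, $u\notin f$, $v\notin g$ (each failure would again force the offending edge to be $uvw$). The latter three facts give $\{u,v,w\}\cap(e\cap f\cap g)=\emptyset$. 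Hence $(u,v,w;\,e,f,g)$ is a triangle in $H\cup G_c$, contradicting triangle-freeness; therefore $d(u,w)=1$.

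I do not expect a real obstacle: the argument is purely combinatorial bookkeeping with no estimates. The only subtlety worth flagging is that the codegree bounds are used in the form ``$\ge 2$'' precisely so that the witnessing edges $e,f$ (and $g$) are honestly distinct from $uvw$; this is also why the conclusion is the \emph{equality} $d(u,w)=1$ rather than merely $d(u,w)\le 1$, since the lower bound $d(u,w)\ge 1$ comes for free from the edge $uvw$ and the upper bound is what the triangle argument supplies.
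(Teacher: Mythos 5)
Your proof is correct and follows essentially the same route as the paper: assume a second edge $g\neq uvw$ through $\{u,w\}$, take the witnessing edges $e,f$ for the codegree hypotheses, and exhibit $(u,v,w;e,f,g)$ as a triangle in $H\cup G_c$. You are somewhat more explicit than the paper in verifying pairwise distinctness of $e,f,g$ and the condition $\{u,v,w\}\cap e\cap f\cap g=\emptyset$ (the paper simply splits on whether the second $u$--$w$ edge is a $3$-edge $uwx$ or the pair $uw\in G_c$), but the argument is the same.
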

\begin{proof}
Since $d(u,v) \geq 2$ and $d(w,v) \geq 2$, there exist distinct edges $e,f \neq uvw$
such that $u,v \in e$ and $w,v \in f$.
If there exists $x \neq v$ such that $uwx \in H$, then $e, f$, and $uxw$ form a triangle
with corresponding vertices $u$, $v$, and $w$.
If $uw \in G_c$, then $e, f$, and $uw$ form a triangle with vertices $u$, $v$, and $w$.
\end{proof}

\begin{claim}
If $uvw$ is an edge and $d(u,w) = 1$, then
\begin{equation}\label{induw}
(\bigcup_{\substack{e: u \in e; v \notin e}}e-u) 
 \cap (\bigcup_{\substack{e: w \in e; v \notin e}}e-w) = \emptyset,
\end{equation}
\begin{equation}\label{induv}
(\bigcup_{\substack{e: u \in e; v \notin e}}e-u) 
 \cap (\bigcup_{e: v \in e; u \notin e}e-v) = \emptyset,
\end{equation}
and
\begin{equation}\label{indwv}
(\bigcup_{\substack{e: w \in e; v \notin e}}e-u) 
 \cap (\bigcup_{e: v \in e; w \notin e}e-v) = \emptyset.
\end{equation}
\end{claim}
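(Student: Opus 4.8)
The plan is to derive all three identities from triangle-freeness of the current hypergraph $H\cup G_c$. First I would note that by Claim~\ref{trifreeclaim}, applied inductively with base case $H^0\cup G^0_c=H\cup G$ (which is triangle-free by the hypothesis of Theorem~\ref{mainthm0}), the hypergraph $H\cup G_c$ stays triangle-free at every iteration; so it suffices to produce a triangle whenever one of the three intersections is nonempty.

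For \eqref{induw}: I would suppose that some vertex $z$ lies in both unions, so that there is an edge $e$ with $u\in e$, $v\notin e$, $z\in e-u$, and an edge $f$ with $w\in f$, $v\notin f$, $z\in f-w$, and then check that $uvw$, $e$, $f$ form a triangle on the vertices $u,w,z$, via the incidences $u,w\in uvw$, $w,z\in f$, $z,u\in e$. The vertices are distinct since $z\neq u$ and $z\neq w$ by the choice of $e,f$, and $z\neq v$ because $v\notin e$ while $z\in e$; in particular $z\notin uvw$. The edges are distinct since $v$ lies in $uvw$ but in neither $e$ nor $f$, and since $e=f$ would make $e$ contain both $u$ and $w$, forcing $e=uvw$ by $d(u,w)=1$ and contradicting $v\notin e$. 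Finally $\{u,w,z\}\cap uvw\cap e\cap f=\emptyset$: we already have $z\notin uvw$; $w\notin e$ (else $e$ contains $u$ and $w$, so $e=uvw$, a contradiction); and $u\notin f$ (else $f$ contains $u$ and $w$, so $f=uvw$, a contradiction). This triangle contradicts triangle-freeness.

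For \eqref{induv} the argument is parallel: a common vertex $z$ yields an edge $e$ with $u\in e$, $v\notin e$, $z\in e-u$ and an edge $f$ with $v\in f$, $u\notin f$, $z\in f-v$, and then $uvw$, $e$, $f$ form a triangle on $u,v,z$ using $u,v\in uvw$, $v,z\in f$, $z,u\in e$. Distinctness of the vertices uses $z\neq u$, $z\neq v$, and $z\neq w$ (if $z=w$ then $e$ contains $u$ and $w$, so $e=uvw$, contradicting $v\notin e$), hence $z\notin uvw$; distinctness of the edges uses $v\in uvw-e$, $u\in uvw-f$, and $u\in e-f$; and $\{u,v,z\}\cap uvw\cap e\cap f=\emptyset$ since $u\notin f$, $v\notin e$, and $z\notin uvw$. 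Equation \eqref{indwv} then follows from \eqref{induv} by exchanging the roles of $u$ and $w$, which is legitimate because both the hypothesis $d(u,w)=1$ and the fact that $uvw$ is an edge are symmetric in $u$ and $w$.

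I expect the only delicate point, and hence the main obstacle, to be the bookkeeping of degenerate collisions: an edge $e$ or $f$ accidentally passing through both $u$ and $w$, or the witness vertex $z$ coinciding with $u$, $v$, or $w$. In each case the hypothesis $d(u,w)=1$ is what resolves it --- any edge through both $u$ and $w$ must equal $uvw$, which is impossible for $e$ and $f$ since they omit $v$ --- and once these cases are eliminated, everything reduces to routine checking that the three edges and three vertices are distinct and that no vertex of the triple lies in all three edges.
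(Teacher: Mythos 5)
Your proposal is correct and follows essentially the same route as the paper: it establishes that $H\cup G_c$ remains triangle-free via Claim~\ref{trifreeclaim}, and then shows directly that any common witness vertex for one of the three intersections would yield a triangle $uvw,e,f$ (with $d(u,w)=1$ used to rule out the degenerate cases $e=f$, $w\in e$, $u\in f$, and $z\in\{u,v,w\}$), with \eqref{indwv} deduced from \eqref{induv} by the symmetry of $u$ and $w$. The only cosmetic divergence is that you write the first union in \eqref{indwv} with $e-w$ rather than the $e-u$ printed in the paper, but that is a typo in the paper's statement: the version actually used later (in proving \eqref{prcnotinuvw}) is indeed $e-w$, so your reading is the correct one.
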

\begin{proof}
Let $x \in U$, and let $e$ be an edge such that $u \in e$, $v \notin e$, and $x \in e-u$.
Then $e \neq uvw$, and since $d(u,w) = 1$, $x \notin \{u,v,w\}$.

Suppose $f$ is an edge such that $w \in f$, $v \notin f$, and $x \in f-w$.
Then, since $x \in f$, $f \neq uvw$.
Using $d(u,w) = 1$, $u \in e$, $w \in f$ and $e,f \neq uvw$,
we get $e \neq f$, $u \notin f$, and $w \notin e$.
Since $x \notin uvw$, we obtain a triangle with edges $e$, $f$, and $uvw$ 
and vertices $u$, $w$, and $x$.

Now suppose that $v, x \in f$ and $u \notin f$.
Again, $f \neq uvw$.
Because $u \in e$ and $u \notin f$, $e \neq f$.
Since $u \notin f$, $v \notin e$, and $x \notin \{u,v,w\}$,
$e$, $f$, and $uvw$ form a triangle with vertices $u$, $v$, and $x$.
By symmetry, this also gives \eqref{indwv}.
\end{proof}

\begin{claim}
If $uv \in G_c$, then
\begin{equation}\label{induvedge}
(\bigcup_{\substack{e: u \in e; v \notin e}}e-u) 
 \cap (\bigcup_{e: v \in e; u \notin e}e-v) = \emptyset.
\end{equation}
\end{claim}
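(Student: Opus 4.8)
The plan is to argue by contradiction, mimicking the proof of \eqref{induv} with the $3$-edge $uvw$ there replaced by the $2$-edge $uv\in G_c$ here; the hypothesis $d(u,w)=1$ used in that argument becomes superfluous, since $uv$ has only the two vertices $u$ and $v$. So I would suppose that some vertex $x$ lies in both unions of \eqref{induvedge}. Unpacking the definitions, this gives an edge $e$ with $u\in e$, $v\notin e$, $x\in e-u$, and an edge $f$ with $v\in f$, $u\notin f$, $x\in f-v$, and the goal is to exhibit a triangle on the vertices $u,v,x$ with edges $uv,e,f$.

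First I would record the distinctness facts. Since $uv\in G_c$ is a genuine $2$-edge, $u\neq v$, and the notations $e-u$, $f-v$ give $x\neq u$ and $x\neq v$, so $u,v,x$ are pairwise distinct. For the edges: $v\in uv$ but $v\notin e$, so $e\neq uv$; $u\in uv$ but $u\notin f$, so $f\neq uv$; and $u\in e$ but $u\notin f$, so $e\neq f$. Hence $uv,e,f$ are three distinct edges of $H\cup G_c$.

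Next I would check the incidence pattern required by the triangle definition: $u,v\in uv$, $v,x\in f$, and $x,u\in e$. For the final condition, I would note that $u\notin f$, hence $u\notin (uv)\cap e\cap f$; $v\notin e$, hence $v\notin (uv)\cap e\cap f$; and $x\notin\{u,v\}$, so $x\notin uv$ and therefore $x\notin (uv)\cap e\cap f$. Thus $\{u,v,x\}\cap (uv)\cap e\cap f=\emptyset$, so $uv,e,f$ together with $u,v,x$ form a triangle in $H\cup G_c$. This contradicts Claim \ref{trifreeclaim}, which guarantees that $H\cup G_c$ remains triangle-free throughout the algorithm, and proves \eqref{induvedge}.

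I expect no real obstacle here: the only thing needing care is verifying the three distinctness conditions and the empty-intersection requirement in the definition of a triangle, now that the edge $uv$ lives in $G_c$ rather than in $H$; everything else follows immediately from the membership and non-membership of $u$ and $v$ in $e$ and $f$.
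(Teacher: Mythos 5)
Your proof is correct and takes the same route as the paper's: assume a common vertex $x$ of the two unions and observe that $uv$, $e$, $f$ form a triangle on $u$, $v$, $x$ in $H \cup G_c$, contradicting triangle-freeness (Claim \ref{trifreeclaim}). You merely spell out the distinctness and empty-intersection checks that the paper's one-line proof leaves implicit.
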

\begin{proof}
If there exist edges $e$ and $f$ and a vertex $x$ such that $u \in e$, $v \notin e$, $v \in f$,
$u \notin f$, and $x \in e-u \cap f-v$, then $e$, $f$, and $uv$ form a triangle with
vertices $u$, $v$, and $x$ in $H \cup G_c$.
\end{proof}

\noindent For a set of vertices $S$, 
let $\gamma_S(c)=1$ denote the event that $\gamma_v(c)=1$ for all $v \in S$,
and let $\gamma_S(c) \neq 1$ denote the event that $\gamma_v(c)=0$ for some $v \in S$.
\begin{claim}\label{qxbound}
For any three vertices $x$, $y$, and $z$,
\[
\Pr[\bigcap_{\substack{e: x \in e; y \notin e}} \gamma_{e-x}(c) \neq 1] \leq
\Pr[\bigcap_{\substack{e: x \in e; y,z \notin e}} \gamma_{e-x}(c) \neq 1]
\leq q_x(c)(1+3\theta\hat{p}).
\]
\end{claim}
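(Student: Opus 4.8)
The first inequality is pure monotonicity: $\{e:\ x\in e,\ y,z\notin e\}\subseteq\{e:\ x\in e,\ y\notin e\}$, so the left-hand intersection ranges over a superset of events, hence defines a smaller event and a smaller probability than the middle one.

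For the second inequality the plan is to recover $q_x(c)$ by adding back exactly the edges through $x$ that meet $\{y,z\}$. Set
\[
\mathcal{A}=\bigcap_{\substack{e:\ x\in e\\ y,z\notin e}}\{\gamma_{e-x}(c)\neq 1\},
\qquad
\mathcal{B}=\bigcap_{\substack{e:\ x\in e\\ e\cap\{y,z\}\neq\emptyset}}\{\gamma_{e-x}(c)\neq 1\}.
\]
Since the edges containing $x$ split into those meeting $\{y,z\}$ and those disjoint from it, $q_x(c)=\Pr[\mathcal{A}\cap\mathcal{B}]$. Each event $\{\gamma_v(c)=0\}$ is decreasing on the product space of the independent indicators $\gamma_\bullet(c)$; for a $3$-edge $e$ the event $\{\gamma_{e-x}(c)\neq 1\}$ is a union of two such events, and for a $2$-edge it is a single such event, so in either case it is decreasing; hence $\mathcal{A}$ and $\mathcal{B}$, being intersections of decreasing events, are decreasing. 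The FKG inequality (used exactly as in the proof of (Q4)) then gives $\Pr[\mathcal{A}\cap\mathcal{B}]\geq\Pr[\mathcal{A}]\Pr[\mathcal{B}]$, so $\Pr[\mathcal{A}]\leq q_x(c)/\Pr[\mathcal{B}]$.

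It remains to show $\Pr[\bar{\mathcal{B}}]$ is small. If $e$ is an edge with $x\in e$ and $y\in e$, then $y\in e-x$ (whether $e$ is a $2$- or a $3$-edge), so $\{\gamma_{e-x}(c)=1\}\subseteq\{\gamma_y(c)=1\}$, and likewise with $z$ in place of $y$; therefore $\bar{\mathcal{B}}\subseteq\{\gamma_y(c)=1\}\cup\{\gamma_z(c)=1\}$ and, since $\hat p$ bounds all weights $p_v(c)$,
\[
\Pr[\bar{\mathcal{B}}]\leq \theta p_y(c)+\theta p_z(c)\leq 2\theta\hat p .
\]
Combining, $\Pr[\mathcal{A}]\leq q_x(c)/(1-2\theta\hat p)\leq q_x(c)(1+3\theta\hat p)$, the last step being the elementary bound $\tfrac{1}{1-t}\leq 1+\tfrac{3t}{2}$ for $0\le t\le \tfrac13$, applied with $t=2\theta\hat p$; this is legitimate once $6\theta\hat p\le 1$, which holds for $\Delta$ large since $\theta\hat p\to 0$.

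\textbf{Main obstacle.} There is no serious obstacle here; the two points to get right are the direction of FKG — one needs $\mathcal{A}$ and $\mathcal{B}$ \emph{positively} correlated so that dropping $\mathcal{B}$ from the intersection only inflates the probability by the factor $1/\Pr[\mathcal{B}]$ — and the observation that every event $\{\gamma_{e-x}(c)=1\}$ over an edge $e\ni x$ meeting $\{y,z\}$ is absorbed into the single-vertex event $\{\gamma_y(c)=1\}$ or $\{\gamma_z(c)=1\}$, which is precisely what keeps $\Pr[\bar{\mathcal{B}}]=O(\theta\hat p)$ without any appeal to codegree bounds.
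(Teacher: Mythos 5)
Your proof is correct and follows essentially the same route as the paper: both arguments use FKG on the decreasing events $\{\gamma_{e-x}(c)\neq 1\}$ to factor out the edges through $x$ meeting $\{y,z\}$, and both bound the resulting correction factor below by $1-2\theta\hat p$ (the paper via a three-way FKG product with each extra factor at least $1-\theta\hat p$, you via a single event $\mathcal{B}$ and a union bound — the same estimate).
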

\begin{proof}
Note first that
\[
\Pr[\bigcap_{e: x \in e; y \in e} \gamma_{e-x}(c) \neq 1] \geq \Pr[\gamma_y(c) = 0] \geq 1-\theta\hat{p}.
\]
Similarly,
\[
\Pr[\bigcap_{e: x \in e; z \in e} \gamma_{e-x}(c) \geq 1-\theta\hat{p}.
\]
Since the events
$\bigcap_{x \in e; y \notin e} \gamma_{e-x}(c) \neq 1$ and
$\bigcap_{x \in e; y \in e} \gamma_{e-x}(c) \neq 1$
are monotone decreasing, the FKG inequality and then the previous two inequalities yield
\begin{align*}
q_x(c) 
&= \Pr[\bigcap_{\substack{e:x \in e; y,z \notin e}} \gamma_{e-x}(c) \neq 1 
 \bigcap_{e: x, y \in e} \gamma_{e-x}(c) \neq 1
 \bigcap_{e: x, z \in e} \gamma_{e-x}(c) \neq 1] \\
&\geq \Pr[\bigcap_{\substack{e: x \in e; y,z \notin e}} \gamma_{e-x}(c) \neq 1]
 \Pr[\bigcap_{e:x \in e, y \in e} \gamma_{e-x}(c) \neq 1]
 \Pr[\bigcap_{e:x \in e, z \in e} \gamma_{e-x}(c) \neq 1] \\
&\geq \Pr[\bigcap_{\substack{e:x \in e; y,z \notin e}} \gamma_{e-x}(c) \neq 1](1-\theta\hat{p})^2 \\
&\geq \Pr[\bigcap_{\substack{e:x \in e; y,z \notin e}} \gamma_{e-x}(c) \neq 1](1-2\theta\hat{p}).
\end{align*}
Thus
\[
\Pr[\bigcap_{\substack{e: x \in e; y,z \notin e}} \gamma_{e-x}(c) \neq 1]
\leq q_x(c)/(1-2\theta\hat{p})
\leq q_x(c)(1+3\theta\hat{p}).
\]
\end{proof}
We can now prove \eqref{prcnotinuvw}, \eqref{prcnotinuvh}, and \eqref{prcnotinuv}. 
Suppose $uvw$ is an edge.
By Claim \ref{codegreeclaim}, we may assume $d(u,w) = 1$.
The events
$\bigcap_{u \in e; v \notin e}\gamma_{e-u}(c) \neq 1$,
$\bigcap_{w \in e; v \notin e}\gamma_{e-w}(c) \neq 1$, and
$\bigcap_{v \in e; u,w \notin e}\gamma_{e-v}(c) \neq 1$
depend only on the sets of random variables
\[
\{\gamma_x(c): x \in \bigcup_{e: u\in e; v \notin e}e-u\},
\]
\[
\{\gamma_x(c): x \in \bigcup_{e: w\in e; v \notin e}e-w\},
\]
and
\[
\{\gamma_x(c): x \in \bigcup_{e: v\in e; u,w \notin e}e-v\},
\]
respectively.
By \eqref{induw}, \eqref{induv}, and \eqref{indwv}, these sets are pairwise disjoint,
so the three events are independent of each other. Therefore, applying Claim \ref{qxbound},
\begin{align*}
&\Pr[c \notin L(u) \cup L(v) \cup L(w)] \\
&= \Pr[\bigcap_{e: u \in e}\gamma_{e-u}(c) \neq 1 \bigcap_{e: v \in e}\gamma_{e-v}(c) \neq 1 \bigcap_{e: w \in e}\gamma_{e-w}(c) \neq 1] \\
&\leq \Pr[\bigcap_{\substack{e:u \in e; v \notin e}}\gamma_{e-u}(c) \neq 1 \bigcap_{\substack{e:v \in e; u,w \notin e}}\gamma_{e-v}(c) \neq 1 \bigcap_{\substack{e:w \in e; v \notin e}}\gamma_{e-w}(c) \neq 1] \\
&= \Pr[\bigcap_{\substack{e:u \in e; v \notin e}}\gamma_{e-u}(c) \neq 1]\Pr[\bigcap_{\substack{e:v \in e; u,w \notin e}}\gamma_{e-v}(c) \neq 1] \Pr[\bigcap_{\substack{e:w \in e; v \notin e}}\gamma_{e-w}(c) \neq 1] \\
&\comp{\leq}{C.\ref{qxbound}} q_u(c)q_v(c)q_w(c)(1+3\theta\hat{p})^3 \\
&< q_u(c)q_v(c)q_w(c)(1+19\theta\hat{p}) \\
&= q_u(c)q_v(c)q_w(c)(1+1/\omega_0).
\end{align*}
This proves \eqref{prcnotinuvw}. The proof of \eqref{prcnotinuvh} is the same,
except we start with any two vertices in $uvw$ instead of all three.

Suppose now that $uv \in G_c$ for some color $c$.
By \eqref{induvedge} and Claim \ref{qxbound},
\begin{align*}
\Pr[c \notin L(u) \cup L(v)] 
&= \Pr[\bigcap_{e: u \in e}\gamma_{e-u}(c) \neq 1 \bigcap_{e: v \in e}\gamma_{e-v}(c) \neq 1] \\
&\leq \Pr[\bigcap_{\substack{e:u \in e; v \notin e}}\gamma_{e-u}(c) \neq 1 \bigcap_{\substack{e:v \in e;u \notin e}}\gamma_{e-v}(c) \neq 1] \\
&\comp{=}{\eqref{induvedge}} \Pr[\bigcap_{e:\substack{u \in e; v \notin e}}\gamma_{e-u}(c) \neq 1]\Pr[\bigcap_{\substack{e:v \in e;u \notin e}}\gamma_{e-v}(c) \neq 1]\\
&\comp{\leq}{C.\ref{qxbound}} q_u(c)q_v(c)(1+3\theta\hat{p})^2 \\
&< q_u(c)q_v(c)(1+7\theta\hat{p}) \\
&< q_u(c)q_v(c)(1+1/\omega_0),
\end{align*}
completing the proof of \eqref{prcnotinuv} and Theorem \ref{mainthm0}.

\noindent \textbf{Proof of Theorem \ref{mainthm}}:
Recall that $c_0 = 1/86,000$.
Let $H$ be a rank $3$, triangle-free hypergraph with maximum $3$-degree $\Delta$ and 
maximum $2$-degree $\Delta_2$.
The original hypergraph $H$ may have some pairs of vertices with codegree
too large to apply Theorem \ref{mainthm0}, so we will work on a modified
hypergraph instead. 
Let
\[
K(u) = \{v \in N(u): d(u,v) \geq \Delta^{6/10}\}.
\]
Define a new hypergraph $H'$ with $V(H') = V(H)$ and
\[
E(H') = E(H) 
 - (\bigcup_{u \in V(H)} \bigcup_{v \in K(u)} \{e : u, v \in e\})
 + (\bigcup_{u \in V(H)} \bigcup_{v \in K(u)} \{u, v\})
\]
Let $\Delta'$, $\Delta_2'$, and $\delta'$ denote the maximum $3$-degree,
maximum $2$-degree, and maximum codegree of $H'$, respectively.
Note that $H'$ is still triangle-free, 
$\chi_l(H) \leq \chi_l(H')$, $\delta' \leq \Delta^{6/10}$, and $\Delta' \leq \Delta$.

Suppose $\Delta'_2 \leq \sqrt{\Delta}\sqrt{c_0 \log \Delta}$.
Since $\Delta' \leq \Delta$ and $\delta' \leq \Delta^{6/10}$,
Theorem \ref{mainthm0} implies
\[
\chi_l(H) \leq \chi_l(H') \leq (\frac{\Delta}{c_0 \log \Delta})^{1/2}.
\]

On the other hand, suppose $\Delta'_2 > \sqrt{\Delta}\sqrt{c_0 \log \Delta}$.
Then, since
\[
\Delta \geq d_H(u) \geq \frac{1}{2}\sum_{v \in N_H(u)}d_H(u,v) 
\geq \frac{1}{2}\sum_{\substack{v \in N_H(u)\\d_H(u,v) \geq \Delta^{6/10}}}d_H(u,v)
\geq |K(u)|\Delta^{6/10}/2,
\]
we have
\[
\Delta'_2 \leq \Delta_2 + 2\Delta^{4/10} < \Delta_2 + \Delta'_2/2.
\]
Choose $\Delta''$ so that $\Delta'_2 = \sqrt{\Delta''}\sqrt{c_0\log\Delta''}$.
Since $\Delta'_2 > \sqrt{\Delta}\sqrt{c_0\log \Delta}$, $\Delta'' > \Delta$.
Then the maximum $3$-degree of $H'$ is at most $\Delta < \Delta''$,
the maximum $2$-degree of $H'$ is at most 
$\Delta'_2 \leq \sqrt{\Delta''}\sqrt{c_0 \log \Delta''}$, and the maximum codegree
of $H'$ is at most $\Delta^{6/10} < \Delta''^{6/10}$,
so Theorem \ref{mainthm0} implies
\[
\chi_l(H) \leq \chi_l(H') \leq (\frac{\Delta''}{c_0 \log\Delta''})^{1/2} 
= \frac{\Delta'_2}{c_0 \log\Delta''} 
< \frac{\Delta'_2}{c_0 \log \Delta'_2}
< \frac{2\Delta_2}{c_0 \log 2\Delta_2}.
\]

\bibliographystyle{amsplain}
\bibliography{bib}
\end{document}